
\documentclass{tac}

\usepackage{amsmath}
\usepackage{amssymb}
\usepackage{amscd}
\usepackage{tikz}
\usetikzlibrary{decorations.markings}
\usepackage[all]{xy}
\xyoption{2cell}
\SelectTips{cm}{}
\usepackage{enumitem}
\usepackage{pgfplots}
\pgfplotsset{compat=1.16}
\usepackage{arydshln}

\newcommand{\C}{\mathbb{C}}
\newcommand{\M}{\mathsf{matr}}
\newcommand{\pb}[1][dr]{\save*!/#1-1.5pc/#1:(-1,1)@^{|-}\restore}
\renewcommand{\mod}{\mathsf{mod}\,}

\newdir{ >}{{}*!/-8pt/@{>}}

\title{The matrix taxonomy of\\ finitely complete categories}

\author{Michael Hoefnagel, Pierre-Alain Jacqmin, and Zurab Janelidze}

\thanks{The second author would like to warmly thank Stellenbosch University for its kind hospitality during his visit in January 2020, when part of the project was realised. He is also grateful to the FNRS for its generous support. The third author acknowledges financial support of the South African National Research Foundation. The authors are also grateful to the anonymous referee for valuable remarks made to improve the readability of the paper.}

\address{M.H.:\\
Mathematics Division \\ Department of Mathematical Sciences \\ Stellenbosch University \\ Private Bag X1 Matieland 7602 \\ South Africa \\ \textrm{Email: mhoefnagel@sun.ac.za} \\ \\
National Institute for Theoretical and Computational Sciences (NITheCS) \\ South Africa\\ \\
P.-A.J.:\\
Institut de Recherche en Math\'ematique et Physique \\ Universit\'e catholique de Louvain \\ Chemin du Cyclotron 2 \\ B 1348 Louvain-la-Neuve \\ Belgium \\ \textrm{Email: pierre-alain.jacqmin@uclouvain.be} \\ \\
Centre for Experimental Mathematics \\ Department of Mathematical Sciences \\ Stellenbosch University \\ Private Bag X1 Matieland 7602 \\ South Africa \\ \\
Z.J.:\\
Mathematics Division \\ Department of Mathematical Sciences \\ Stellenbosch University \\ Private Bag X1 Matieland 7602 \\ South Africa \\ \textrm{Email: zurab@sun.ac.za}  \\ \\
National Institute for Theoretical and Computational Sciences (NITheCS) \\ South Africa
}

\keywords{finitely complete category, poset of matrix properties, computer-generated proof, Mal'tsev category, majority category, arithmetical category}
\amsclass{03B35, 18E13, 18-08, 08B05, 68V20, 03G30 (primary); 68V05, 18A35, 18B15, 03C35, 08C05 (secondary)}

\copyrightyear{2022}

\begin{document}

\maketitle
\begin{abstract}
This paper is concerned with the taxonomy of finitely complete categories, based on `matrix properties' --- these are a particular type of exactness properties that can be represented by integer matrices. In particular, the main result of the paper gives an algorithm for deciding whether a conjunction of such properties implies another such property. Computer implementation of this algorithm allows one to peer into the complex structure of the poset of `matrix classes', i.e., the poset of all collections of finitely complete categories determined by matrix properties. Among elements of this poset are the collections of Mal'tsev categories, majority categories, (finitely complete) arithmetical categories, as well as finitely complete extensions of various classes of varieties defined by a special type of Mal'tsev conditions found in the literature.
\end{abstract}

\section*{Introduction}

This paper deals with so-called matrix properties of categories, introduced and initially studied in \cite{JanelidzeMSc,JanelidzePhD,ZJanelidze2006a,ZJanelidze2006b}. The problem of finding an algorithm for deciding when does one matrix property imply another has been unsolved since then. In the present paper we solve this problem.

In what follows, we will work with matrices of (non-negative) integers, such as
$$\left[\begin{array}{ccccc} 1 & 1 & 0 & 2 & 2 \\ 0 & 0 & 1 & 1 & 0 \\ 2 & 0 & 1 & 2 & 1 \end{array}\right].$$
Such matrices encode systems of `linear equations' in the algebraic theory of a variety of universal algebras, which in the specific example above is:
$$\left\{\begin{array}{ccc} p(x_1, x_1, x_0, x_2, x_2)=x_0, \\ p(x_0, x_0, x_1, x_1, x_0)=x_0, \\ p(x_2, x_0, x_1, x_2, x_1)=x_0. \end{array}\right.$$
Recall that a variety of universal algebras is the collection of all algebraic structures, defined by some signature of operators satisfying certain identities, such as the ones above, where the $x_i$'s are variables (so they may assume arbitrary values in a specific algebra) and $p$ is one of the operators. The system of equations above gives a property of a variety stating that using the operators in its signature one may build an operator $p$ such that every algebra in the variety will model the system (every equation in the system will hold for all values of the $x_i$'s in the algebra) --- for a variety to have this property is, thus, a particular type of Mal'tsev condition~\cite{Taylor1973}. For example, in the variety of vector spaces over a two-element field, we can define 
$$p(x_1,x_2,x_3,x_4,x_5)=x_1+x_2+x_3+x_4+x_5$$
to satisfy the given identities. This means that the variety in question has the property determined by the matrix. Notice that the matrix is obtained from such a system of linear equations as the matrix of indices of variables in the system, appearing to the left of the equality sign. The variables on the right of the equality sign by default all have index $0$ (otherwise, we must `extend' the matrix with another column).

Algebras in a variety form a category, where morphisms are homomorphisms of algebras. The property of a variety that the system of equations determines can be reformulated as a property of this category. To do this, one first wants to see the matrix as a property of an $n$-ary relation, where $n$ is the number of rows of the matrix (so $n=3$ in the example above). This is done by means of `column-vectors' of the matrix --- in the case of the matrix above, the corresponding property of a ternary relation $R$ states:
$$\left\{\left[\begin{array}{c} x_1 \\ x_0 \\ x_2 \end{array}\right],\left[\begin{array}{c} x_1 \\ x_0 \\ x_0 \end{array}\right], \left[\begin{array}{c} x_0 \\ x_1 \\ x_1 \end{array}\right], \left[\begin{array}{c} x_2 \\ x_1 \\ x_2 \end{array}\right], \left[\begin{array}{c} x_2 \\ x_0 \\ x_1 \end{array}\right]\right\}\subseteq  R\quad\Longrightarrow\quad \left[\begin{array}{c} x_0 \\ x_0 \\ x_0 \end{array}\right]\in R.$$
The property of a category determined by the matrix states that all \emph{internal} $n$-ary relations in the category must have the property above \emph{internalised} to the category (this process of `internalisation' is the standard one given by the Yoneda embedding). We consider this property together with the requirement that the category is finitely complete. The Mal'tsev condition on a variety obtained from a matrix is equivalent to the corresponding condition on internal relations in the category of algebras (categories of algebras are always finitely complete). This is an easy theorem to prove, using standard universal-algebraic techniques (see~\cite{ZJanelidze2006a}), which can be illustrated in the case of vector spaces and the matrix above by saying that the existence of a linear combination $p$ that fulfills the above system of equations is equivalent to every subspace $R$ of $V^n$ (for every vector space~$V$) to satisfy the implication above. Thus, collections of varieties given by Mal'tsev conditions such as the one above extend to collections of finitely complete categories given by matrices. We refer to these collections of finitely complete categories as `matrix classes'.

Among examples of matrix classes are the matrix class of Mal'tsev categories in the sense of~\cite{CarboniPedicchioPirovano1991,CarboniLambekPedicchio1991} and the matrix class of majority categories in the sense of~\cite{Hoefnagel2019a,Hoefnagel2020a}. Their universal-algebraic counterparts are given by the collections of Mal'tsev varieties~\cite{Maltsev1954,Smith1976} and varieties admitting a majority term~\cite{Pixley1963}. The intersection of these two collections is given by arithmetical varieties~\cite{Pixley1963,Pixley1971,Pixley1972}. Arithmetical categories in the sense of~\cite{Pedicchio1996} are Barr-exact categories with coequalisers in the matrix class given by a matrix that determines the Mal'tsev condition for arithmetical varieties. Many more matrix classes can be produced from various Mal'tsev conditions commonly encountered in universal algebra. This brings us to the following questions: how many matrix classes are there, for given dimensions of a matrix, and how can we decide whether two matrices give rise to the same matrix class or not?

In this paper we present an algorithm for determining implications of finite conjunctions of matrix properties, which leads to answering the questions stated above, with the help of a computer, for matrices with sufficiently small dimensions. For example, it allows one to generate Figure~\ref{fig:4x5x2}, which gives a computation of the poset of all `non-degenerate' matrix classes given by matrices in $\M(4,5,2)$ ordered by inclusion, where $\M(4,5,2)$ denotes the set of matrices having $4$ rows, $5$ columns and whose each entry is either $0$ or~$1$ (we will explain what `non-degenerate' means shortly). Each of the matrix classes in Figure~\ref{fig:4x5x2} is represented by a `canonical matrix' (defined in Section~\ref{sec4}) which we display as a grid, where each white square represents the entry $0$ and each grey square represents the entry~$1$. The reason why some of the matrices in Figure~\ref{fig:4x5x2} appear to have smaller dimensions than $4\times 5$ is that we can always add duplicate rows and columns to a matrix without changing the matrix class (as confirmed by Proposition~1.7 in~\cite{ZJanelidze2006b}). By a `non-degenerate' matrix class we mean one that is defined by a non-empty matrix and is not equal to the collection of finitely complete preorders or the collection of all finitely complete categories (see Section~\ref{sec2} for a discussion of the empty matrix). The algorithm presented in this paper is first of its kind in the broader field of category theory concerned with general methods in the study of exactness properties of categories.

\begin{figure}[htb]
    \centering
    \includegraphics[width=400pt]{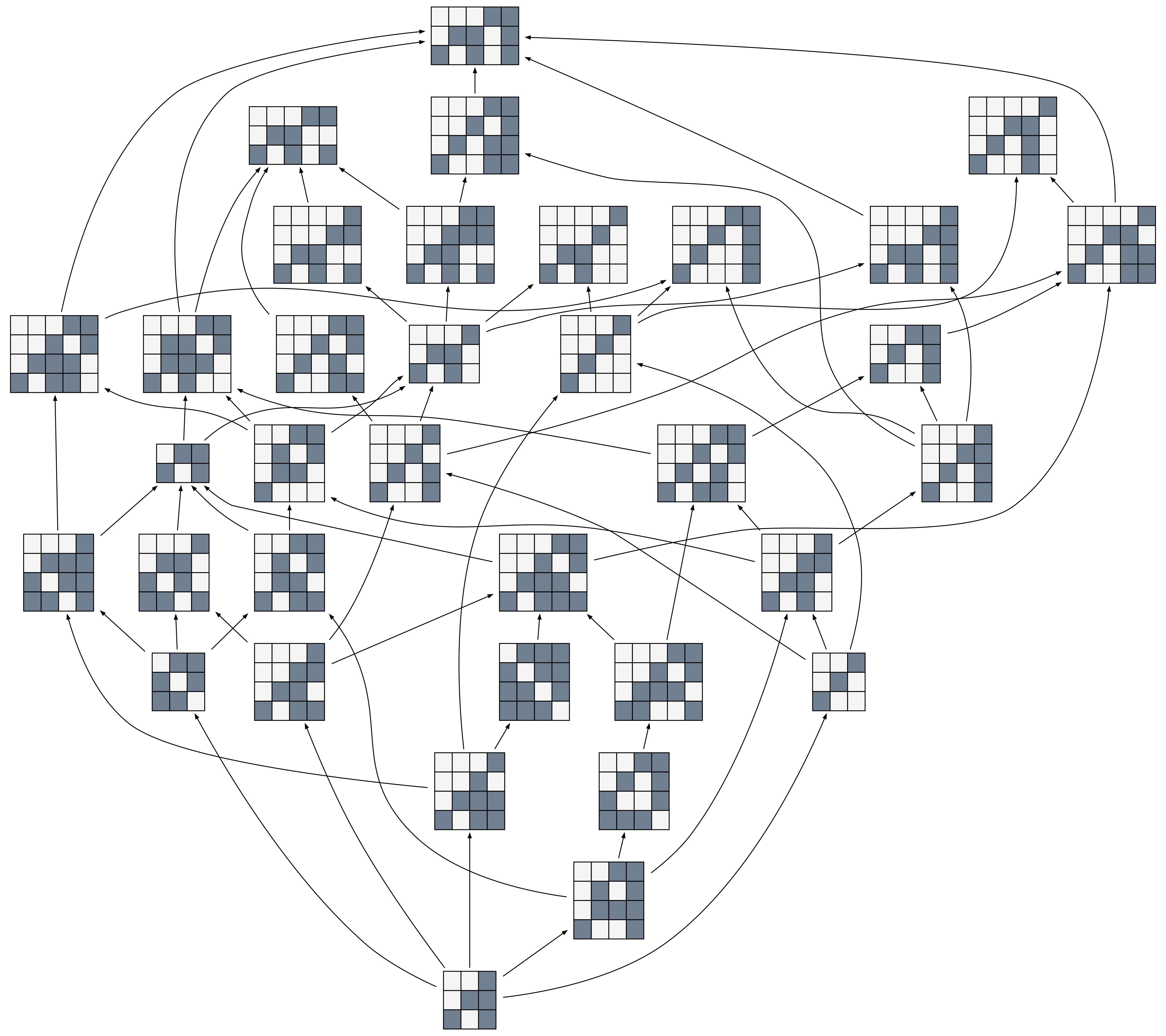}
    \caption{Hasse diagram of the poset of non-degenerate matrix classes in $\mathsf{Mclex}[4,5,2]$.}
    \label{fig:4x5x2}
\end{figure}

Let us make a similar remark as in~\cite{CarboniKellyPedicchio1993}, one of the pioneering papers on Mal'tsev categories: \textit{such a pursuit would have no value unless Mal'tsev categories were of fairly common occurrence in mathematics}; except,  in our case, `Mal'tsev categories' should be replaced with `categories having matrix properties'. Firstly, there are the algebraic examples. For instance, in the Mal'tsev case, these are the categories of groups, rings, modules, Lie algebras, loops, Heyting algebras, and many others. Then, such structures internal to a given finitely complete category provide examples as well. Still in the Mal'tsev case, this gives us, say, the category of topological groups as an example. Examples of a different kind are given by duals of geometric categories. For instance, the dual of the category of topological spaces is a majority category (but not a Mal'tsev category). 
See~\cite{Hoefnagel2019a,Weighill2017} for some other examples. It is worth remarking that the dual of the category of sets belongs to all non-degenerate matrix classes. Also, matrix classes are closed under reflective subcategories and various natural categorical constructions, such as taking categories of functors from any fixed category to a category in the matrix class.

The results of this present paper, which actually build on ideas contained already in \cite{HoefnagelPhD,Hoefnagel2019a,Hoefnagel2020a} and~\cite{Weighill2017}, reveal a new realm of geometric examples of members of matrix classes that are given by subcategories of duals of categories of relations. It is these examples that lead us to the algorithm presented herein for deciding implications of matrix properties. We must point out that, as already mentioned in the remark following Corollary~5.13 in~\cite{HoefnagelPhD}, these same examples show that there are implications of matrix properties that hold in the algebraic case, but not in the general finitely complete case. In other words, implications of matrix properties are \emph{context sensitive} (see Section~\ref{sec:context sensitivity}).

The paper is organised as follows. Apart from this Introduction, there are five sections. Section~\ref{sec1} is in most part recollection of necessary concepts from the literature dealing with matrix properties. Section~\ref{sec2} develops these concepts further along with distinguishing and studying a special class of matrices, called `trivial matrices', which require a separate treatment in proofs of the results contained in subsequent sections. Section~\ref{sec3} contains the main results of the paper. These results lead to the algorithm for deciding implications of (conjunctions of) matrix properties, which is formulated in the same section. Section~\ref{sec4} gives computation of some fragments of the poset of matrix classes, based on a computer implementation of the algorithm. We conclude with Section~\ref{sec:context sensitivity}, which discusses context-sensitivity of matrix properties.

\renewcommand\contentsname{\vspace*{-1cm}}
\tableofcontents

\section{Matrix classes}\label{sec1}

We assume basic knowledge of categories such as (finite) limits, colimits and functors, such as can be found in~\cite{MacLane1998}. For a category~$\mathbb{C}$, it is customary to denote the set of all morphisms from an object $X$ to an object $Y$ in $\mathbb{C}$ by $\mathbb{C}(X,Y)$.

Given integers $n,k > 0$ and $m \geqslant 0$, we write $\M(n,m,k)$ for the set of all $n\times m$ matrices whose entries are elements of the set $\{0,\dots,k-1\}$. Given a sequence $(S_1,\dots,S_n)$ of sets, a \emph{row-wise interpretation} of $A\in\M(n,m,k)$ of type $(S_1,\dots,S_n)$, is an $n\times m$ matrix $B$ whose entries are given by $b_{ij}=f_i(a_{ij})$, where each $f_i$ is a specified function $f_i\colon \{0,\dots,k-1\}\to S_i$ and the $a_{ij}$'s are the entries of~$A$. We will often display a row-wise interpretation of $A$ as a matrix extended with the column of values~$f_i(0)$:
$$\left[\begin{array}{ccc|c} f_1(a_{11}) & \dots & f_1(a_{1m}) & f_1(0)\\ \vdots & & \vdots & \vdots \\ f_n(a_{n1}) & \dots & f_n(a_{nm}) &  f_n(0) \end{array}\right]$$
We write $[0]_n$ for a column-matrix with $n$ rows, all of whose entries are~$0$. Next, we present in this language the definition of the well-known concept of an `internal relation' in a category.

An \emph{internal $n$-ary relation} (between objects $C_1,\dots,C_n$) in a category $\mathbb{C}$ is given by an object $R$ together with a row-wise interpretation 
$$r=\left[\begin{array}{c} r_1 \\ \vdots \\ r_n \end{array}\right]$$
of $[0]_n\in\M(n,1,1)$ of type $(\mathbb{C}(R,C_1),\dots,\mathbb{C}(R,C_n))$ (so, it is given by a pair $(R,r)$) such that
$$\left[\begin{array}{c} r_1x \\ \vdots \\ r_nx \end{array}\right]=\left[\begin{array}{c} r_1y \\ \vdots \\ r_ny \end{array}\right]\quad\Longrightarrow\quad x=y$$
for any two parallel morphisms $x,y\colon X\to R$ with an arbitrary domain~$X$. We sometimes denote such a relation $(R,r)$ simply by~$r$. When $\mathbb{C}$ has finite products, an internal $n$-ary relation in $\mathbb{C}$ can also be viewed as a monomorphism $r\colon R\rightarrowtail C_1\times\dots\times C_n$, with $r_i=\pi_ir$, where $\pi_i$ denotes $i$-th product projection $\pi_i\colon C_1\times\dots\times C_n\to C_i$.

We remind the reader that up to conceptual identification of monomorphisms into an object with `subobjects' of that object, we have the following:
\begin{itemize}
    \item Internal $n$-ary relations in the category $\mathbf{Set}$ of sets are the same as the usual $n$-ary relations between sets. For this reason, definitions involving internal relations in a category can be particularised to relations between sets.
    
    \item More generally, internal $n$-ary relations in an algebraic category (by which, in this paper, we mean the category of algebras of a given single-sorted variety of universal algebras) are the same as $n$-ary homomorphic relations (recall that a homomorphic relation between, say, groups $C_1,\dots,C_n$ is a subgroup of the cartesian product $C_1\times\dots\times C_n$). Thus, for instance, a congruence on a group is an example of an internal $2$-ary (binary) relation in the category of groups.
\end{itemize}

For an internal $n$-ary relation $(R,r)$ between objects $C_1,\dots,C_n$ in a category~$\mathbb{C}$, a matrix $M\in\M(n,m,k)$ and an object~$X$, we say that $r$ is \emph{compatible} with a row-wise interpretation 
$$\left[\begin{array}{ccc|c} x_{11} & \dots & x_{1m} & y_1\\ \vdots & & \vdots & \vdots\\ x_{n1} & \dots & x_{nm} & y_n \end{array}\right]$$
of $M$ of type $(\mathbb{C}(X,C_1),\dots,\mathbb{C}(X,C_n))$ when, if there exist morphisms $u_1,\dots,u_m\colon X\to R$ such that 
$$\left[\begin{array}{c} x_{1j} \\ \vdots \\ x_{nj} \end{array}\right]=\left[\begin{array}{c} r_1u_j \\ \vdots \\ r_nu_j \end{array}\right]$$
for each $j\in\{1,\dots,m\}$, then there exists a morphism $v\colon X\to R$ such that
$$\left[\begin{array}{c} y_{1} \\ \vdots \\ y_{n} \end{array}\right]=\left[\begin{array}{c} r_1v \\ \vdots \\ r_nv \end{array}\right].$$
We say that $r$ is \emph{strictly $M$-closed over $X$} when it is compatible with every row-wise interpretation of $M$ of type $(\mathbb{C}(X,C_1),\dots,\mathbb{C}(X,C_n))$.

\begin{example}\label{ExaD}
Consider the category $\mathbf{Gp}$ of groups and let $X$ be any group. We will now demonstrate how every homomorphic relation $R$ between groups $C$ and $C'$ is strictly $M$-closed over~$X$, where
$$M=\left[\begin{array}{ccccc} 0 & 1 & 1 & 2 & 2\\ 1 & 1 & 2 & 2 & 0 \end{array}\right].$$
Firstly, we remark that the relation $R$ is seen as an internal binary relation $(R,r)$, with $r_1\colon R\to C$ and $r_2\colon R\to C'$ given by $r_1(a,b)=a$ and $r_2(a,b)=b$. 
A row-wise interpretation of $M$ of type $(\mathbb{C}(X,C),\mathbb{C}(X,C'))$ is given by a matrix
$$\left[\begin{array}{ccccc|c} x_0 & x_1 & x_1 & x_2 & x_2 & x_0\\ x'_1 & x'_1 & x'_2 & x'_2 & x'_0 & x'_0 \end{array}\right],$$
where $x_i\colon X\to C$ and $x'_i\colon X\to C'$ are arbitrary group homomorphisms, for $i\in\{0,1,2\}$. We must show that if there exist group homomorphisms $u_1,\dots,u_5\colon X\to R$ such that for all $w\in X$ we have
\begin{align*}
u_1(w)&=(x_0(w),x'_1(w)),\\
u_2(w)&=(x_1(w),x'_1(w)),\\
u_3(w)&=(x_1(w),x'_2(w)),\\
u_4(w)&=(x_2(w),x'_2(w)),\\
u_5(w)&=(x_2(w),x'_0(w)),
\end{align*}
then there exists a group homomorphism $v\colon X\to R$ such that $v(w)=(x_0(w),x'_0(w))$ for all $w\in X$. Note that since $x_0$ and $x'_0$ are homomorphisms, we just need to show that a function $v\colon X\to R$ exists satisfying the equality above for all $w\in X$. Define $v$ as follows:
$$v(w)=u_1(w)-u_2(w)+u_3(w)-u_4(w)+u_5(w).$$
Then 
\begin{align*}
&v(w)\\
&=u_1(w)-u_2(w)+u_3(w)-u_4(w)+u_5(w)\\
&= (x_0(w)-x_1(w)+x_1(w)-x_2(w)+x_2(w),x'_1(w)-x'_1(w)+x'_2(w)-x'_2(w)+x'_0(w))\\
&=(x_0(w),x'_0(w))
\end{align*}
as desired.
\end{example}

Given a matrix $M\in\M(n,m,k)$ and a set~$S$, an \emph{interpretation} (without `row-wise') of $M$ of type $S$ is a row-wise interpretation of $M$ of type $(S,\dots,S)$ for which $f_1=\dots=f_n$.
An internal $n$-ary relation between identical objects $C,\dots,C$ in~$\mathbb{C}$ (which we will, in the future, refer to as an internal relation `on'~$C$) is \emph{$M$-closed} (without `strictly') over an object $X$ when it is compatible with any interpretation of $M$ of type $\mathbb{C}(X,C)$. Following~\cite{ZJanelidze2006a}, we say that
\begin{itemize}
    \item an internal relation $r$ (between objects $C_1,\dots,C_n$) is \emph{strictly $M$-closed}, when $r$ is strictly $M$-closed over every object $X$ in the category;
    
    \item an internal relation $r$ on an object $C$ is \emph{$M$-closed}, when $r$ is $M$-closed over every object $X$ in the category;
    
    \item the given category $\mathbb{C}$ \emph{has $M$-closed relations} when every internal $n$-ary relation on any object in $\mathbb{C}$ is $M$-closed; when $\mathbb{C}$ is finitely complete, this is equivalent to every internal $n$-ary relation in $\mathbb{C}$ being strictly $M$-closed (by Theorem~2.4 in~\cite{ZJanelidze2006a}).
\end{itemize}

Strict $M$-closedness of a homomorphic relation between algebras $A_1,\dots,A_n$, seen as an internal relation in some algebraic category where $A_1,\dots,A_n$ are objects, is equivalent to its strict $M$-closedness over the free algebra with one generator, which itself is equivalent to the strict $M$-closedness of the same relation seen as an internal relation in the category of sets. The latter is nothing but the statement that for any row-wise interpretation
$$\left[\begin{array}{ccc|c} x_{11} & \dots & x_{1m} & y_1\\ \vdots & & \vdots & \vdots\\ x_{n1} & \dots & x_{nm} & y_n \end{array}\right]$$
of $M$ of type $(A_1,\dots,A_n)$, we have:
$$\left\{\left[\begin{array}{c} x_{11} \\ \vdots \\ x_{n1} \end{array}\right],\dots, \left[\begin{array}{c} x_{1m} \\ \vdots \\ x_{nm} \end{array}\right]\right\}\subseteq  R\quad\Longrightarrow\quad \left[\begin{array}{c} y_1 \\ \vdots \\ y_n \end{array}\right]\in R.$$
In the context of Example~\ref{ExaD} this means that to check that $R$ was strictly $M$-closed over arbitrary~$X$, for the matrix $M$ given there, all we needed to do is to check that we always have 
$$\left\{\left[\begin{array}{c} x_0 \\ x'_1 \end{array}\right], \left[\begin{array}{c} x_1 \\ x'_1 \end{array}\right],\left[\begin{array}{c} x_1 \\ x'_2 \end{array}\right],\left[\begin{array}{c} x_2 \\ x'_2 \end{array}\right],\left[\begin{array}{c} x_2 \\ x'_0 \end{array}\right]\right\}\subseteq  R\quad\Longrightarrow\quad \left[\begin{array}{c} x_0 \\ x'_0 \end{array}\right]\in R,$$
where this time $x_i\in C$ and $x'_i\in C'$ for $i\in\{0,1,2\}$. We have this indeed thanks to the equality
$$\left[\begin{array}{c} x_0 \\ x'_1 \end{array}\right]- \left[\begin{array}{c} x_1 \\ x'_1 \end{array}\right]+\left[\begin{array}{c} x_1 \\ x'_2 \end{array}\right]-\left[\begin{array}{c} x_2 \\ x'_2 \end{array}\right]+\left[\begin{array}{c} x_2 \\ x'_0 \end{array}\right]= \left[\begin{array}{c} x_0 \\ x'_0 \end{array}\right].$$
Similar remarks apply to the notion of $M$-closedness.

\begin{example}
A transitive relation $R$ on a set $C$ in $\mathbf{Set}$ is always $M$-closed, when
$$M=\left[\begin{array}{cc} 0 & 1 \\ 1 & 0 \end{array}\right],$$
since if $(x_0,x_1)\in R$ and $(x_1,x_0)\in R$, then $(x_0,x_0)\in R$. However, $R$ is not necessarily strictly $M$-closed, since that would mean that $(x_0,x'_0)\in R$ every time $(x_0,x'_1)\in R$ and $(x_1,x'_0)\in R$. For instance, if $R$ was reflexive, strict $M$-closedness would force $R=C\times C$. 
\end{example}

We must point out that the notation for matrices $M$ used in this paper is different from the usual notation originally introduced in~\cite{JanelidzeMSc,JanelidzePhD,ZJanelidze2006a}, where the study of categories with $M$-closed relations, for a general~$M$, takes its start. In the usual notation, matrices are filled in with variables instead of integers, and contain an additional right column, just like the display of a row-wise interpretation above. In that case, if we consider pointed categories, one usually uses the $0$'s to represent a zero morphism (or a constant in the algebraic interpretation). This is not a problem for the present paper, since here we do not consider the pointed context; our $0$'s will thus always represent a default variable used to fill the right column. We must note here that some of the results of this paper have analogues in the pointed case, and that some do not. For a separate treatment of the pointed case we refer the reader to~\cite{HoefnagelJacqmin2021}.

For a given~$M$, the collection of all finitely complete categories with $M$-closed relations is denoted by $\mathsf{mclex}\{M\}$ and is called a \emph{matrix class} (of finitely complete categories) in this paper (this notation abbreviates the term `matrix class of left exact categories' in which `left exact category' is an alternative name for a `finitely complete category'). By a \emph{matrix property} we mean the property of a finitely complete category to have $M$-closed relations for a given matrix~$M$ (as in~\cite{JanelidzePhD}).

\begin{remark}
Note that matrix classes are `too large' to be classes in the usual set-theoretic sense. For this reason, without the term `matrix' we refer to them as `collections'. Another possible name for `matrix class' is `matrix family', which would mimic the name `Mal'tsev family' used in universal algebra, as introduced in~\cite{FreeseMcKenzie2017}. 
\end{remark}

\begin{example}\label{ExaA}
The most standard example of a collection of finitely complete categories with $M$-closed relations is the one where
$$M=\left[\begin{array}{ccc} 0 & 1 & 1\\ 1 & 1 & 0 \end{array}\right].$$
An internal binary relation $r\colon R\rightarrowtail C_1 \times C_2$ in a finitely complete category is strictly $M$-closed with respect to this matrix if and only if, for each object $X$ and each morphisms $f_0,f_1\colon X\to C_1$ and $g_0,g_1\colon X\to C_2$, if the three induced morphisms
$$\left[\begin{array}{c} f_0 \\ g_1 \end{array}\right], \left[\begin{array}{c} f_1 \\ g_1 \end{array}\right], \left[\begin{array}{c} f_1 \\ g_0 \end{array}\right]\colon X \to C_1 \times C_2$$
factor through~$r$, then so does the induced morphism
$$\left[\begin{array}{c} f_0 \\ g_0 \end{array}\right]\colon X \to C_1 \times C_2.$$
If the considered category is an algebraic category, this happens if and only if the relation is difunctional~\cite{Riguet1948}, i.e., it satisfies
$$[x_0Ry_1 \wedge x_1Ry_1 \wedge x_1Ry_0]\Longrightarrow x_0Ry_0$$
for every elements $x_0,x_1\in C_1$ and $y_0,y_1\in C_2$. In comparison, an internal binary relation $r\colon R\rightarrowtail C^2$ in a finitely complete category is $M$-closed with respect to this matrix if and only if, for each object $X$ and each morphisms $f_0,f_1\colon X\to C$, if the three induced morphisms
$$\left[\begin{array}{c} f_0 \\ f_1 \end{array}\right], \left[\begin{array}{c} f_1 \\ f_1 \end{array}\right], \left[\begin{array}{c} f_1 \\ f_0 \end{array}\right]\colon X \to C^2$$
factor through~$r$, then so does the induced morphism
$$\left[\begin{array}{c} f_0 \\ f_0 \end{array}\right]\colon X \to C^2.$$
So the matrix class $\mathsf{mclex}\{M\}$ defined by the matrix $M$ above is the collection of Mal'tsev categories~\cite{CarboniPedicchioPirovano1991,CarboniLambekPedicchio1991}. The concept of a Mal'tsev category has occupied a prominent place in categorical algebra since the 1990's --- see for instance~\cite{BorceuxBourn2004,BournGranJacqmin2020} and the references therein.  Algebraic Mal'tsev categories are nothing other than categories of algebras in a Mal'tsev variety in the sense of~\cite{Smith1976}: a variety containing a ternary term $p$ satisfying
$$\left\{\begin{array}{c}p(x_0,x_1,x_1)=x_0,\\ p(x_1,x_1,x_0)=x_0.\end{array}\right.$$
Such a term can be created in varieties of (not necessarily abelian) group-like structures:
$$p(x,y,z)=x-y+z.$$ 
Mal'tsev varieties are also known in universal algebra as `congruence-permutable' varieties, since they are exactly those varieties in which composition of congruences on an algebra is commutative. This goes back to~\cite{Maltsev1954}, where Mal'tsev varieties as well as Mal'tsev conditions in general were born. 
\end{example}

\begin{remark}\label{RemA}
Expression of the property defining Mal'tsev varieties in terms of congruences recalled in Example~\ref{ExaA} extends to all matrix properties. This is not particularly relevant for the present paper, so we will not elaborate on this in detail. It is worth remarking, however, that such reformulations of matrix properties give rise to geometric interpretations of these properties in the style of~\cite{Smith1976} and~\cite{Gumm1983}. Let us briefly describe this representation, as it opens up potential links with finite geometry. For the following reformulation to hold, we restrict ourselves here to the case where each row of the matrix contains at least one~$0$. Think of each column of the matrix as a point in a discrete plane. Think of each row as an equivalence class of `parallel' (discrete) lines (where lines being parallel is no longer a property, but an imposed structure). If two entries $x_{ij}$ and $x_{ij'}$ in the same row are equal, interpret this as the points $j$ and $j'$ having a line passing through them which belongs to the equivalence class $i$ of parallel lines. In the case of the Mal'tsev matrix (i.e., the matrix from Example~\ref{ExaA}), we get a familiar geometric representation of the property defining a Mal'tsev variety
$$\begin{tikzpicture}
  \filldraw (0,0) circle (2pt) -- (1,0) circle (2pt) node [sloped,midway] {\tiny/};
  \draw (-0.5,1) circle (2pt) -- (0.5,1) circle (2pt) node [sloped,midway] {\tiny/};
  \filldraw (0,0) circle (2pt) -- (-0.5,1) circle (2pt) node [sloped,midway] {\tiny//};
  \draw (1,0) circle (2pt) -- (0.5,1) circle (2pt) node [sloped,midway] {\tiny//};
\end{tikzpicture}$$
where the hollow point represents the extended column of the matrix. The following table links this representation with the matrix:
$$\begin{array}{c|ccc|c} & \bullet & \bullet & \bullet & \circ\\\hline / & 0 & 1 & 1 & 0\\  // & 1 & 1 & 0 & 0 \end{array}$$
Note that for other matrices, such drawings may require curved lines, as the geometries that arise here are in most cases not embeddable in the usual Euclidean geometry. The drawings suggest what the corresponding condition on congruences is: simply view each point as an element of the algebra on which the congruences are defined and each line as the property that the endpoints are in the same class of a congruence, with two lines being `parallel' indicating that the same congruence is considered; the condition then requires that the picture can be completed with an element representing the extended column of the matrix.
\end{remark}

\begin{example}\label{ExaB}
The matrix
$$M=\left[\begin{array}{ccc} 1 & 0 & 0\\ 0 & 1 & 0 \\ 0 & 0 & 1 \end{array}\right]$$
defines majority categories in the sense of~\cite{Hoefnagel2019a, Hoefnagel2020a}, which generalise varieties having a majority term~\cite{Pixley1963}, i.e., a ternary term $p$ satisfying 
$$\left\{\begin{array}{c}p(x_1,x_0,x_0)=x_0,\\ p(x_0,x_1,x_0)=x_0,\\ p(x_0,x_0,x_1)=x_0.\end{array}\right.$$
If Mal'tsev varieties capture varieties of group-like structures, varieties with a majority term capture varieties of lattice-like structures. A majority term can be defined in any lattice by setting:
$$p(x,y,z)=(x\wedge y)\vee (x\wedge z)\vee(y\wedge z).$$
\end{example}

\begin{example}\label{ExaC}
Arithmetical categories in the sense of~\cite{Pedicchio1996} are those Barr-exact categories~\cite{BarrGrilletOsdol1971} with coequalisers in which all ternary relations are $M$-closed, where 
$$M=\left[\begin{array}{ccc} 0 & 1 & 1\\ 1 & 1 & 0 \\ 0 & 1 & 0 \end{array}\right].$$
In the more general context of regular categories~\cite{BarrGrilletOsdol1971} in the place of Barr-exact categories with coequalisers, these become equivalence distributive Mal'tsev categories in the sense of~\cite{GranRodeloNguefeu2020}, where the link with strict $M$-closedness, for the $M$ above, is established. It is easy to prove (and it will be done in Section~\ref{sec:context sensitivity} as an application of our algorithm) that further extension to the finitely complete context gives us the matrix class of Mal'tsev majority categories, already considered in~\cite{Hoefnagel2019a}. In this paper, we thus refer to arithmetical categories as (finitely complete) Mal'tsev majority categories. The universal-algebraic origin of the matrix class of arithmetical categories --- the collection of arithmetical varieties in the sense of~\cite{Pixley1963,Pixley1971,Pixley1972}, can be described in any of the following equivalent ways (among many others):
\begin{itemize}
\item as varieties that contain a \emph{Pixley term}, i.e., a ternary term $p$ satisfying 
$$\left\{\begin{array}{c}p(x_0,x_1,x_1)=x_0,\\ p(x_1,x_1,x_0)=x_0,\\ p(x_0,x_1,x_0)=x_0,\end{array}\right.$$

\item as those Mal'tsev varieties where algebras have distributive congruence lattices,

\item as those Mal'tsev varieties that contain a majority term.
\end{itemize}
Varieties containing Boolean algebra operations are arithmetical (a Boolean algebra is both a group-like and a lattice-like structure). The term $p$ above can be expressed using the Boolean operations as follows:
$$p(x,y,z)=(x\wedge y\wedge z)\vee(x\wedge\neg y)\vee (z\wedge\neg y).$$
\end{example}

\begin{remark}\label{RemB}
In Examples~\ref{ExaA}, \ref{ExaB} and~\ref{ExaC}, the Mal'tsev conditions arise from the matrix properties through a general process described in the Introduction. The Mal'tsev conditions arising in this way are of a very particular type, where each identity in the Mal'tsev condition is of the form $p(x_1,\dots,x_m)=y_1$, where $x_1,\dots,x_m,y_1$ are variables. Many other Mal'tsev conditions studied in universal algebra, which do not have this type, can be strengthened to a Mal'tsev condition of this type. This process can be referred to as \emph{syntactical refinement} (see~\cite{JanelidzePhD}). We illustrate it on the following example. Consider the Mal'tsev condition from~\cite{ChajdaEigenthalerLanger2003} that characterises Mal'tsev varieties with directly decomposable congruences~\cite{FraserHorn1970}. This Mal'tsev condition states that the variety contains binary terms $s_1,\dots,s_m$ and $t_1,\dots,t_m$, and a term $u$ of arity $m+1$, such that the following identities hold in the variety (see Corollary~11.0.6 in~\cite{ChajdaEigenthalerLanger2003}):
$$
\left\{\begin{array}{l} u(x,s_1(x,y),\dots,s_m(x,y))=x,\\
u(y,t_1(x,y),\dots,t_m(x,y))=x,\\
u(y,s_1(x,y),\dots,s_m(x,y))=x,\\
u(x,t_1(x,y),\dots,t_m(x,y))=y.
\end{array}\right. $$
Now we will consider the case when all the binary terms $s_i$ and $t_j$ are actually variables, i.e., they satisfy $v(x,y)=x$ or $v(x,y)=y$. Moreover, we consider the case where $\{(s_i(x,y),t_i(x,y))\,|\,1\leqslant i \leqslant m \}=\{x,y\} \times \{x,y\}$. Writing out the corresponding matrix and deleting the duplicate columns, we get:
$$\left[\begin{array}{ccccc|c} x & x & x & y & y & x\\
y & x & y & x & y & x\\
y & x & x & y & y & x\\
x & x & y & x & y & y
\end{array}\right].$$
To get it in the integer form, we first want to swap $x$ and $y$ in the last row. Then, writing $0$ for $x$ and $1$ for $y$, and deleting the extended column of $0$'s, we get:
$$M=\left[\begin{array}{ccccc} 0 & 0 & 0 & 1 & 1 \\
1 & 0 & 1 & 0 & 1 \\
1 & 0 & 0 & 1 & 1 \\
1 & 1 & 0 & 1 & 0
\end{array}\right].$$
Many other Mal'tsev conditions encountered in universal algebra refine to Mal'tsev conditions given by matrix properties. In some other examples, an undetermined number of `outer terms' $u_1,\dots,u_s$ are considered in the original Mal'tsev condition (instead of just one $u$ as in the example above). In this case, the refinement is usually obtained by considering only the particular case $s=1$ and then by applying to it a similar technique as above. Note that the process of refinement described here is a purely syntactical procedure, and may depend on the choice of presentation of a Mal'tsev condition.
\end{remark}

It is clear that the matrix property arising from $M\in\M(n,m,k)$ is the same as the one arising from $M$ viewed as a member of $\M(n,m,k')$, for any $k'\geqslant k$. Moreover, according to Proposition~1.7 in~\cite{ZJanelidze2006b}, we have:
\begin{itemize}
\item Given two matrices $M\in\M(n,m,k)$ and $N\in\M(n,m',k')$ such that every column of $M$ is a column of~$N$, then any finitely complete category with $M$-closed relations also has $N$-closed relations, i.e., $\mathsf{mclex}\{M\} \subseteq \mathsf{mclex}\{N\}$.
\item Given two matrices $M\in\M(n,m,k)$ and $N\in\M(n',m,k')$ such that every row of $N$ is a row of~$M$, then any finitely complete category with $M$-closed relations also has $N$-closed relations, i.e., $\mathsf{mclex}\{M\} \subseteq \mathsf{mclex}\{N\}$.
\end{itemize}
The first of these results can be proved by showing that an internal $n$-ary strictly $M$-closed relation is strictly $N$-closed (for $M$ and $N$ as in the statement). The other result can be proved from the following fact. Using the notation in the statement, there exists a function $g\colon\{1,\dots,n'\}\to\{1,\dots,n\}$ such that, for each $i\in\{1,\dots,n'\}$, the $i$-th row of $N$ is the same as the $g(i)$-th row of~$M$. Given an object $C$ in a finitely complete category, we can consider the $n$-th and $n'$-th powers of $C$ with respective projections denoted by $\pi_1,\dots,\pi_n\colon C^n\to C$ and $\pi'_1,\dots,\pi'_{n'}\colon C^{n'}\to C$. The function $g$ induces a unique morphism $\overline{g}\colon C^n\to C^{n'}$ such that $\pi'_i\overline{g}=\pi_{g(i)}$ for each $i\in\{1,\dots,n'\}$. The second statement follows from the fact that, given an internal $n'$-ary relation $r\colon R\rightarrowtail C^{n'}$ on $C$ and considering the pullback
$$\vcenter{\xymatrix{S \pb \ar[r] \ar@{ >->}[d]_-{s} & R \ar@{ >->}[d]^-{r} \\ C^n \ar[r]_-{\overline{g}} & C^{n'}}}\quad ,$$
the internal $n$-ary relation $s$ is $M$-closed if and only if $r$ is $N$-closed. It follows from these results that matrix properties of finitely complete categories are invariant under duplication and permutation of the columns and of the rows of the matrices.

The notions of $M$-closedness and strict $M$-closedness have been studied in~\cite{ZJanelidze2006a,ZJanelidze2006b}. In this paper, we add a third type of closedness property of a relation under a matrix $M\in\M(n,m,k)$. Call an internal $n'$-ary relation in a category \emph{$M$-sharp} if it is strictly $M'$-closed under any matrix $M'\in\M(n',m,k)$, obtained from $M$ by permutation, duplication and deletion of rows --- in other words, every row of $M'$ is a row of~$M$. Note that in this notion, the matrix $M$ itself does not need to have exactly $n'$ rows --- one can have $n>n'$, $n=n'$ or $n<n'$. By the above remark, a finitely complete category has $M$-closed relations if and only if every $n'$-ary internal relation in it is $M$-sharp, where $n'$ can be arbitrary or alternatively, any fixed value that is greater or equal to~$n$. As an illustration of this new closedness property of a relation, consider an affine subspace $R$ of a three dimensional vector space $F^3$ over a field~$F$, seen as a ternary relation on~$F$. Such an $R$ is $M$-sharp for the Mal'tsev matrix
\begin{equation}\label{equ Maltsev matrix}
M=\left[\begin{array}{ccc} 0 & 1 & 1\\ 1 & 1 & 0 \end{array}\right]
\end{equation}
for the following reason. There are $8$ possible $M'$-s for this $M$: 
$$
\begin{array}{cccc}
M_{111}=\left[\begin{array}{ccc} 0 & 1 & 1\\ 0 & 1 & 1\\ 0 & 1 & 1\end{array}\right] &  
M_{112}=\left[\begin{array}{ccc} 0 & 1 & 1\\ 0 & 1 & 1\\ 1 & 1 & 0\end{array}\right] &
M_{121}=\left[\begin{array}{ccc} 0 & 1 & 1\\ 1 & 1 & 0\\ 0 & 1 & 1\end{array}\right] &
M_{122}=\left[\begin{array}{ccc} 0 & 1 & 1\\ 1 & 1 & 0\\ 1 & 1 & 0\end{array}\right]\\ \\
M_{211}=\left[\begin{array}{ccc} 1 & 1 & 0\\ 0 & 1 & 1\\ 0 & 1 & 1\end{array}\right] &  
M_{212}=\left[\begin{array}{ccc} 1 & 1 & 0\\ 0 & 1 & 1\\ 1 & 1 & 0\end{array}\right] &
M_{221}=\left[\begin{array}{ccc} 1 & 1 & 0\\ 1 & 1 & 0\\ 0 & 1 & 1\end{array}\right] &
M_{222}=\left[\begin{array}{ccc} 1 & 1 & 0\\ 1 & 1 & 0\\ 1 & 1 & 0\end{array}\right]
\end{array}$$
Row-wise interpretations of, say, $M_{212}$ of type $(F,F,F)$ are given by matrices of elements of $F$ of the form
$$\left[\begin{array}{ccc|c} a & a & b & b\\ c & d & d & c \\ e & e & f & f\end{array}\right].$$
Whenever the `left columns' of this matrix (i.e., columns left to the vertical line) belong to the affine subspace, so does the `right column' (the column right to the vertical line) thanks to the fact that 
$$\left[\begin{array}{c} a \\ c \\ e \end{array}\right]-\left[\begin{array}{c} a \\ d \\ e \end{array}\right]+\left[\begin{array}{c} b \\ d \\ f \end{array}\right]=\left[\begin{array}{c} b \\ c \\ f \end{array}\right].$$
For the same reason, $R$ will be strictly closed with respect to the remaining seven matrices. As a counter-example, let us remark that the ternary relation
$$R'=\left\{\left[\begin{array}{c} x \\ y \\ y \end{array}\right], \left[\begin{array}{c} y \\ y \\ y \end{array}\right], \left[\begin{array}{c} y \\ x \\ x \end{array}\right]\right\} \subset \{x,y\}^3$$
on the two element set $\{x,y\}$ is strictly $M_{221}$-closed but not (strictly) $M_{122}$-closed. It is thus neither $M$-sharp nor $M_{221}$-sharp as every row of $M_{122}$ is a row of $M_{221}$. However, one trivially has that $R'$ is $M_{111}$-sharp and $M_{222}$-sharp since both these matrices have a column of zeros.

It will be useful to have a designated concept of an interpretation for the closedness property of sharpness. Given a sequence of sets $(S_1,\dots,S_{n'})$, we define a \emph{reduction} of type $(S_1,\dots,S_{n'})$ of a matrix $M\in\M(n,m,k)$ to be a matrix $M'''$ which can be obtained as follows:
\begin{itemize}
\item choose some matrix $M'\in\M(n',m,k)$ whose every row is a row in~$M$;
\item choose some row-wise interpretation $M''$ of type $(S_1,\dots,S_{n'})$ of~$M'$;
\item possibly duplicate and permute some (left) columns of $M''$ and possibly delete some duplicate (left) columns of $M''$ to obtain the reduction~$M'''$.
\end{itemize}
Then an internal $n'$-ary relation between the objects $C_1,\dots,C_{n'}$ in a category $\mathbb{C}$ is $M$-sharp if and only if it is compatible with every reduction of $M$ of type $(\mathbb{C}(X,C_1),\dots,$ $\mathbb{C}(X,C_{n'}))$ (for an arbitrary object~$X$). We say that a matrix $M'''$ is a \emph{reduction} of $M$ if it is a reduction of type $(S_1,\dots,S_{n'})$ of $M$ for some sets $S_1,\dots,S_{n'}$. Let us give a concrete example of a reduction of type $(\{0,1\},\{2,3\},\{4,5,6\})$ of the Mal'tsev matrix $M$ as in~(\ref{equ Maltsev matrix}). As a matrix $M'$ with three rows each of which is a row of~$M$, we can choose, e.g., the matrix $M_{121}$ as above. As a row-wise interpretation $M''$ of type $(\{0,1\},\{2,3\},\{4,5,6\})$ of $M_{121}$, we can choose, e.g., the matrix
$$M''=\left[\begin{array}{ccc|c} 1 & 0 & 0 & 1\\ 3 & 3 & 3 & 3 \\ 4 & 6 & 6 & 4\end{array}\right].$$
Finally, the matrix
$$M'''=\left[\begin{array}{ccc|c} 0 & 1 & 1 & 1\\ 3 & 3 & 3 & 3 \\ 6 & 4 & 4 & 4\end{array}\right]$$
which is obtained from $M''$ by deleting the second column, duplicating the first column, and then reversing the left columns, is an example of a reduction of type $(\{0,1\},\{2,3\},$ $\{4,5,6\})$ of~$M$.

Let us conclude this section with a useful lemma on reductions.

\begin{lemma}\label{Lemma reductions}
Given integers $n,n',k,k'>0$ and $m\geqslant 0$ and a matrix $M\in\M(n,m,k)$, if $M'''$ is a reduction of $M$ of type $(\{0,\dots,k'-1\},\dots,\{0,\dots,k'-1\})$ whose right column is $[0]_{n'}$, then every finitely complete category with $M$-closed relations also has $M'''$-closed relations, i.e., $\mathsf{mclex}\{M\} \subseteq \mathsf{mclex}\{M'''\}$ (considering only the left part of~$M'''$).
\end{lemma}

\begin{proof}
Let $M'$ and $M''$ be matrices as in the above definition of a reduction. The inclusion $\mathsf{mclex}\{M\} \subseteq \mathsf{mclex}\{M'\}$ and the equality  $\mathsf{mclex}\{M''\} = \mathsf{mclex}\{M'''\}$ have been discussed above. It thus remains to show that the inclusion $\mathsf{mclex}\{M'\} \subseteq \mathsf{mclex}\{M''\}$ holds. But this follows from the fact that, by definition of $M''$ and since its right column is~$[0]_{n'}$, each row-wise interpretation of $M''$ is also a row-wise interpretation of $M'$ of the same type with the same right column.
\end{proof}

\section{Triviality and functionality}\label{sec2}

A matrix $M\in\M(n,m,k)$ is said to be \emph{trivial} if every finitely complete category with $M$-closed relations is a preorder, i.e., any two parallel morphisms in the category coincide. It is not difficult to see that any preorder has $M$-closed relations for any non-empty matrix~$M$, so:
\begin{enumerate}[label=(\roman*)]
    \item\label{triviality first observation} Non-empty trivial matrices are precisely those matrices $M$ for which the matrix class $\mathsf{mclex}\{M\}$ of finitely complete categories with $M$-closed relations matches with the collection of finitely complete preorders.
\end{enumerate}
To say something about the empty matrices, first let us set what we mean by an `empty matrix'. Formally, elements of each $\M(n,m,k)$ are functions $\{1,\dots,n\}\times\{1,\dots,m\}\to\{0,\dots,k-1\}$. Since $n$ and $k$ are considered to be positive, empty matrices arise when $m=0$. Applying the definition of $M$-closedness to this case, we get: 
\begin{enumerate}[resume, label=(\roman*)]
    \item\label{triviality m=0} When $m=0$, since matrix properties on finitely complete categories are invariant under duplication of rows, a finitely complete category $\mathbb{C}$ has $M$-closed relations if and only if for any monomorphism $r\colon R\rightarrowtail C$ and for any morphism $x\colon X\to C$, we have $ru=x$ for some morphism~$u$. This is equivalent to saying that any monomorphism in $\mathbb{C}$ is an isomorphism (just consider the case when $x=1_C$). Considering the equaliser of two parallel morphisms, this implies that such a category is a preorder. Therefore, any morphism is a monomorphism, and thus an isomorphism. Considering the product of two objects $X$ and~$Y$, we know that $\mathbb{C}(X,Y)$ is not empty and is thus a singleton set. Therefore, if $m=0$, $\mathsf{mclex}\{M\}$ is the collection of categories equivalent to the single morphism category, i.e, the terminal category.
\end{enumerate}
With the observation~\ref{triviality m=0}, we have completely described the matrix classes of finitely complete categories determined by empty matrices. Moreover, observations~\ref{triviality first observation} and~\ref{triviality m=0} combine to the following:

\begin{lemma}\label{LemA}
A matrix $M$ is trivial if and only if $\mathsf{mclex}\{M\}$ is the collection of all finitely complete preorders or the collection of preorders with a single isomorphism class of objects, the second case occurring if and only if $M$ is empty.
\end{lemma}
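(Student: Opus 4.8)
The plan is to read the statement off from the two observations~\ref{triviality first observation} and~\ref{triviality m=0} recorded just above, together with the already-noted fact that every preorder has $M$-closed relations for every non-empty~$M$. Essentially all the content is contained in those observations; what remains is to match them to the wording of the lemma, to identify the empty-matrix class with the ``single isomorphism class'' collection, and to check that the two collections appearing in the statement are genuinely distinct, so that the final clause is unambiguous.

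First I would treat the empty matrices. By observation~\ref{triviality m=0}, when $M$ is empty the class $\mathsf{mclex}\{M\}$ consists of exactly the left exact categories equivalent to the terminal category. I would then identify this with the collection of preorders having a single isomorphism class of objects: a category equivalent to the terminal category is a thin category with a single isomorphism class of objects, hence a preorder of the required form; conversely, in a preorder any two parallel morphisms agree, so if all objects lie in one isomorphism class then every hom-set is a singleton and the category is equivalent to the terminal one. In particular each such category is a preorder, whence every empty matrix is trivial.

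With this in hand the main biconditional is immediate. For the backward implication, if $\mathsf{mclex}\{M\}$ is either the collection of all left exact preorders or the collection of preorders with a single isomorphism class, then every member is a preorder, and so $M$ is trivial by definition. For the forward implication, I would assume $M$ is trivial and split on emptiness: if $M$ is non-empty, observation~\ref{triviality first observation} yields $\mathsf{mclex}\{M\}$ equal to the collection of all left exact preorders, while if $M$ is empty the previous paragraph yields the collection of preorders with a single isomorphism class.

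Finally, for the clause determining when the second case occurs, I would exhibit a left exact preorder with more than one isomorphism class, for instance the two-element chain $0 < 1$ regarded as a category (it has a top element and binary meets, hence all finite limits, yet $0$ and $1$ are non-isomorphic). This shows that the collection of all left exact preorders strictly contains the ``single isomorphism class'' collection, so the two cases are mutually exclusive; combined with the forward-implication case split, the second case arises precisely when $M$ is empty. I do not anticipate a real obstacle here: the argument is bookkeeping over observations~\ref{triviality first observation} and~\ref{triviality m=0}, and the only step asking for a little care is the categorical identification of ``equivalent to the terminal category'' with ``a preorder having a single isomorphism class of objects.''
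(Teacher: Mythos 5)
Your proposal is correct and follows exactly the route the paper intends: the paper states the lemma as the immediate combination of observations~\ref{triviality first observation} and~\ref{triviality m=0}, which is precisely your argument. The extra bookkeeping you supply --- identifying categories equivalent to the terminal category with preorders having a single isomorphism class, and exhibiting the two-element chain to show the two collections are distinct --- is exactly the routine verification the paper leaves implicit.
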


Let us recall that a (non-row-wise) interpretation of type $S$ of a matrix $A=[a_{ij}]_{i,j}\in \M(n,m,k)$ is given by the following data:
\begin{enumerate}[label=(\roman*)]
    \item\label{interpretation function f} a function $f\colon \{0,\dots,k-1\}\to S$,
    
    \item an $n\times m$ matrix $B=[b_{ij}]_{i,j}$ with entries from~$S$,
    
    \item these two ingredients being related as follows: $f(a_{ij})=b_{ij}$ for each $i\in\{1,\dots,n\}$ and each $j\in\{1,\dots,m\}$.
\end{enumerate}
This data can be represented as a commutative triangle of maps in~$\mathbf{Set}$
$$\xymatrix{
 & n S^k\ar[rd]^-{\pi^S_A} & \\
n\mathbf{1}\ar[rr]_-{B}\ar[ur]^-{nf} & & S^m
}$$
where:
\begin{itemize}
\item $nX$ stands for the $n$-fold sum (i.e., the coproduct or the disjoint union) of a set $X$ with itself and $X^p$ for the dual (i.e., the $p$-fold product),

\item $\mathbf{1}$ is the terminal object (i.e., the singleton set),

\item the morphism $B$ is then the obvious representation of the matrix $B$ as a map (the values of this map are the $m$-tuples formed by the rows of~$B$),

\item $f$ is a map $\mathbf{1}\to S^k$ that has its unique value the function $f$ from~\ref{interpretation function f} seen as a $k$-tuple of elements of~$S$, while $nf$ is the canonically induced map between $n$-fold sums of sets,

\item the map $\pi_A^S$ is the canonical map from the sum to a product given by the matrix
$$\pi_A^S=\left[\begin{array}{ccc} \pi_{a_{11}+1} & \dots & \pi_{a_{1m}+1} \\ \vdots & & \vdots \\ \pi_{a_{n1}+1} & \dots & \pi_{a_{nm}+1} \end{array}\right]$$ of product projections $S^k\to S$ ($\pi_1$ is the first product projection, $\pi_2$ the second, and so forth). 
\end{itemize}
Notice that $\pi_A^S$ itself represents an interpretation of $A$ of type $\mathbf{Set}(S^k,S)$. Thus, we have a specific interpretation of~$A$, given by $\pi_A^S$, which `generates' all interpretations via the triangle above. The image of the map $\pi_A^S$ is nothing other than the set of all possible rows of all possible row-wise interpretations of $A$ of type $(S,\dots,S)$.

The morphism $\pi_A^S$ can obviously be defined in any category $\mathbb{C}$ having the required sums and products --- we call $\pi_A^S$ a \emph{canonical interpretation} of $A$ in $\mathbb{C}$ of type~$S$. Consider a triangle as above in a category~$\mathbb{C}$, with $n\mathbf{1}$ replaced by $nC$, the object $S$ renamed to $X$ and the matrix $A$ renamed to~$M$:
$$\xymatrix{ & n X^k\ar[rd]^-{\pi^X_M} & \\ nC\ar[rr]_-{B}\ar[ur]^-{nf} & & X^m}$$ This triangle now describes interpretations of $M$ of type $\mathbb{C}(C,X)$. Therefore, an internal $n$-ary relation on an object $C$ in the dual category $\mathbb{C}^\mathsf{op}$, described in $\mathbb{C}$ as an epimorphism
$$r=\left[\begin{array}{c} r_1 \\ \vdots \\ r_n \end{array}\right]\colon nC\twoheadrightarrow R,$$  is $M$-closed over $X$ if and only if  
every commutative diagram of solid arrows in the following display can always be filled with the dashed morphism retaining the commutativity:
\begin{equation}\label{EquA}
\vcenter{\xymatrix{X & &\\ & & n X^k\ar[rd]^-{\pi^X_M}\ar[llu]_-{\pi_{[0]_n}^X} & \\ & nC\ar@{->>}[dl]_{r}\ar[ur]_-{nf} & & X^m \\ R\ar[urrr]\ar@{-->}[uuu] & }}
\end{equation}
If we let $C=X^k$ and $f$ be the identity morphism of~$X^k$, then $nf$ is the identity morphism of $nX^k$ and so the diagrammatic condition above becomes:
\begin{equation}\label{EquB}
\vcenter{\xymatrix{X & &\\ & & n X^k\ar@{->>}[ld]_-{r'}\ar[rd]^-{\pi^X_M}\ar[llu]_-{\pi_{[0]_n}^X} & \\ & R'\ar@{-->}[luu]\ar[rr] & & X^m }}
\end{equation}
So, fixing a matrix $M\in\M(n,m,k)$, a category $\mathbb{C}$ with finite products and finite colimits and an object $X$ in~$\mathbb{C}$, if every internal $n$-ary relation on any object in $\mathbb{C}^\mathsf{op}$ is $M$-closed over~$X$, then any commutative diagram~(\ref{EquB}) in $\mathbb{C}$ (for an arbitrary epimorphism $r'\colon nX^k\twoheadrightarrow R'$) can be filled in with a dashed morphism. Moreover, by pushing out $r$ along $nf$ in~(\ref{EquA}) we can get the converse implication (note that a pushout of an epimorphism is an epimorphism). Suppose furthermore that any morphism $h$ in $\mathbb{C}$ has a \emph{universal epi-factorisation}, i.e., a factorisation $h=ge$ via an epimorphism $e$ such that any similar factorisation $h=g'e'$ with $e'$ an epimorphism yields $e=ue'$ for a (necessarily unique) morphism~$u$. Then we can assume the bottom triangle in the diagram~(\ref{EquB}) to be such a universal factorisation. Existence of the dashed morphism for it alone will imply the existence of the dashed morphism when $r'$ is an arbitrary epimorphism. If furthermore kernel pairs (i.e., pullbacks of a morphism along itself) exist, then the morphism $R'\to X^m$ in the universal epi-factorisation of the morphism $\pi^X_M$ is necessarily a monomorphism (consider the epimorphism obtained by composing $r'$ with the coequaliser of the kernel pair of the morphism $R'\to X^m$). In the case when $\mathbb{C}=\mathbf{Set}$, this monomorphism is given by the image of the map~$\pi^X_M$. In this case, what the dashed map $R'\to X$ does is to assign to each row $(x_{i1},\dots,x_{im})$ from some interpretation of $M$ of type~$X$, the corresponding value $y_i$ in the extended display of the interpretation of~$M$:
$$\left[\begin{array}{ccc|c} x_{11} & \dots & x_{1m} & y_1\\ \vdots & & \vdots & \vdots\\ x_{n1} & \dots & x_{nm} & y_n \end{array}\right]$$
Of course, such a map exists if and only if $y_i$ never depends on the choice of the interpretation. Let us say that such a matrix $M$ is \emph{functional} in~$X$. Keeping the same terminology in the general case, we obtain: 

\begin{theorem}\label{ThmA}
Given integers $n,k>0$ and $m\geqslant 0$, a matrix $M\in\M(n,m,k)$ and an object $X$ in a category $\mathbb{C}$ having finite products and finite colimits, as well as universal epi-factorisations, the following conditions are equivalent:
\begin{enumerate}[label=(\arabic*)]
\item Every internal $n$-ary relation on any object in $\mathbb{C}^\mathsf{op}$ is $M$-closed over~$X$.

\item Every internal $n$-ary relation in $\mathbb{C}^\mathsf{op}$ is strictly $M$-closed over~$X$.

\item $M$ is functional in~$X$, i.e., for the universal epi-factorisation $\pi^X_M=i^X_Mr^X_M$ of the canonical interpretation $\pi_M^X$ of $M$ in $\mathbb{C}$ of type~$X$, there is a morphism $p^X_M\colon R_M^X\to X$ making the diagram $$\xymatrix{
X & &\\ & & n X^k\ar@{->>}[ld]_-{r^X_M}\ar[rd]^-{\pi^X_M}\ar[llu]_-{\pi_{[0]_n}^X} & \\
& R_M^X\ar@{-->}[luu]^-{p^X_M}\ar[rr]_-{i^X_M} & & X^m
}$$
commute.
\end{enumerate}
\end{theorem}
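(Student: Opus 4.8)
The plan is to establish the cycle $(2)\Rightarrow(1)\Rightarrow(3)\Rightarrow(2)$, relying on the dictionary set up in the discussion above: an internal $n$-ary relation on an object of $\mathbb{C}^{\mathsf{op}}$ is, in $\mathbb{C}$, an epimorphism $r$ out of a coproduct, an interpretation is encoded by a map into $nX^k$ together with the canonical interpretation $\pi^X_M$, and $M$-closedness over $X$ amounts to a diagram-completion statement. The implication $(2)\Rightarrow(1)$ is immediate: a relation on a single object $C$ is a relation between $C,\dots,C$, and an ordinary interpretation of type $\mathbb{C}^{\mathsf{op}}(X,C)$ is a special row-wise interpretation (all $f_i$ equal), so strict $M$-closedness over $X$ specialises to $M$-closedness over $X$.

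For $(1)\Rightarrow(3)$ I would test $(1)$ on one well-chosen relation. Take the object $X^k$ of $\mathbb{C}$ and let $r^X_M\colon nX^k\twoheadrightarrow R_M^X$ be the epimorphism from the universal epi-factorization $\pi^X_M=i^X_Mr^X_M$; it is an internal $n$-ary relation on $X^k$ in $\mathbb{C}^{\mathsf{op}}$. Apply $(1)$ to it with the identity interpretation, namely $f(\ell)=\pi_{\ell+1}\colon X^k\to X$, so that the induced map $nX^k\to nX^k$ is the identity. The hypothesis of $M$-closedness is then that $\pi^X_M$ factors through $r^X_M$, which holds via $i^X_M$; the conclusion produces $p^X_M\colon R_M^X\to X$ with $p^X_Mr^X_M=\pi_{[0]_n}^X$, which is precisely $(3)$.

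The substantive step is $(3)\Rightarrow(2)$. A relation between $C_1,\dots,C_n$ in $\mathbb{C}^{\mathsf{op}}$ is an epimorphism $r\colon C_1+\dots+C_n\twoheadrightarrow R$ in $\mathbb{C}$, and a row-wise interpretation is encoded by maps $f_i\colon C_i\to X^k$ assembling into $f\colon C_1+\dots+C_n\to nX^k$. Dualising the compatibility definition, the hypothesis of strict $M$-closedness over $X$ reads ``$\pi^X_Mf$ factors through $r$'' and the sought conclusion reads ``$\pi_{[0]_n}^Xf$ factors through $r$''. So suppose $\pi^X_Mf=gr$. I would push $r$ out along $f$ to obtain $r'\colon nX^k\twoheadrightarrow R'$ (an epimorphism, being a pushout of $r$) together with $\bar f\colon R\to R'$ satisfying $\bar fr=r'f$; since $gr=\pi^X_Mf$, the pushout induces $h\colon R'\to X^m$ with $hr'=\pi^X_M$. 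The universal property of the epi-factorization, applied to this factorization $\pi^X_M=hr'$, yields $u\colon R'\to R_M^X$ with $r^X_M=ur'$, and then $v':=p^X_Mu$ satisfies $v'r'=p^X_Mr^X_M=\pi_{[0]_n}^X$. Finally $v:=v'\bar f$ gives $vr=v'\bar fr=v'r'f=\pi_{[0]_n}^Xf$, the required factorization through $r$. This closes the cycle and proves all three conditions equivalent.

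The main obstacle I anticipate is the bookkeeping in $(3)\Rightarrow(2)$: one must dualise the compatibility condition for relations between distinct objects $C_1,\dots,C_n$ and verify that its hypothesis and conclusion are faithfully rendered by the factorization statements for $\pi^X_Mf$ and $\pi_{[0]_n}^Xf$. The decisive design choice is to push $r$ out along $f$ rather than to factor through $r$ directly; this avoids assuming that $r$ is a regular epimorphism (which is not available here) and simultaneously reduces an arbitrary epimorphism to the single universal one $r^X_M$ through the universal epi-factorization. Keeping $X$ fixed throughout guarantees that the relativised ``over $X$'' form of each statement is preserved, so that no appeal to the unrelativised equivalence between $M$-closedness and strict $M$-closedness is required.
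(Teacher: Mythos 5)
Your proof is correct and follows essentially the same route as the paper's own argument (which appears as the discussion preceding the theorem): translate closedness over $X$ into the diagram-completion statement for the canonical interpretation $\pi^X_M$, specialise to $C=X^k$ with the identity interpretation to extract $p^X_M$, and recover the general case by pushing out the relation along the interpretation map and invoking the universal property of the epi-factorization. The only difference is organizational: the paper works out the equivalence $(1)\Leftrightarrow(3)$ and leaves the row-wise case $(2)$ implicit, whereas you run the pushout argument directly at the level of strict closedness for relations between distinct objects $C_1,\dots,C_n$, which makes the $(3)\Rightarrow(2)$ leg fully explicit.
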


Note that $M$ is \emph{dysfunctional} in a set~$X$ (i.e., it is not functional in $X$ as an object in $\mathbf{Set}$) if and only if there are two interpretations of $M$ of type $X$ having a common row (not necessarily in the same position), with the value in the corresponding extended column being different. This is equivalent to existence of a reduction of $M$ of the form
$$T=\left[\begin{array}
{ccc|c} a_1 & \dots & a_m & b\\ a_1 & \dots & a_m & c \end{array}\right]$$
where $b\neq c$ and all entries are elements of~$X$. Thus, when $X$ is either empty or singleton, any matrix $M$ is automatically functional in $X$. As we will shortly see, there is a simple way of verifying that a matrix is dysfunctional in a set having at least two elements that does not require modifying the matrix. It amounts to finding a row that does not contain $0$ as an entry, or if there is no such row, then finding two distinct rows and $0$ entries in each of the rows which cannot be connected to each other by a path along the matrix consisting of steps that alternate between vertical steps of switching between the two rows in the same column and horizontal steps that move within the selected row from one position to another position having the same entry as at the original position. For instance, such a path is possible between the $0$ entries of the two rows of the Mal'tsev matrix:
$$\xymatrix{ 0\ar@{-}[d] & 1\ar@{-}[r] & 1\ar@{-}[d]\\ 1\ar@{-}[r] & 1\ar@{-}[u] & 0 }$$
This implies that a reduction as described above does not exist. Indeed, since each row of the Mal'tsev matrix contains a $0$ entry, we cannot get $T$ by using just one row of the Mal'tsev matrix. An attempt to get $T$ by using both rows of the Mal'tsev matrix would also fail, since equalising the left columns of the Mal'tsev matrix forces the right extended column to have equal entries too:
$$\left[\begin{array}{ccc|c} 0 & 1 & 1 & 0\\ 1 & 1 & 0 & 0 \end{array}\right]\to \left[\begin{array}{ccc|c} a & 1 & 1 & a\\ a & a & 0 & 0 \end{array}\right]\to \left[\begin{array}{ccc|c} a & a & a & a\\ a & a & 0 & 0 \end{array}\right]\to \left[\begin{array}{ccc|c} a & a & a & a\\ a & a & a & a \end{array}\right] .$$ 
We will call such a path in a matrix a linkage. More precisely, for an integer $l\geqslant 0$, a \emph{linkage} of length~$l$ connecting the positions $(i_0,j_0)$ and $(i_l,j_l)$ in a matrix $M$ is a sequence
$$(i_0,j_0),\dots,(i_l,j_l)$$
of $l+1$ positions in the matrix $M$ such that
\begin{itemize}
\item for each $t\in\{0,\dots,l-1\}$ with $t\equiv 0 \,\,(\mod 4)$, one has $i_t=i_0$, $i_{t+1}=i_l$ and $j_t=j_{t+1}$ (i.e., $(i_t,j_t)\to (i_{t+1},j_{t+1})$ is a vertical step from the $i_0$-th row to the $i_l$-th row),
\item for each $t\in\{0,\dots,l-1\}$ with $t\equiv 1 \,\,(\mod 4)$, one has $i_t=i_{t+1}=i_l$ and the entries of $M$ in the positions $(i_t,j_t)$ and $(i_{t+1},j_{t+1})$ are equal (i.e., $(i_t,j_t)\to (i_{t+1},j_{t+1})$ is a horizontal step in the $i_l$-th row),
\item for each $t\in\{0,\dots,l-1\}$ with $t\equiv 2 \,\,(\mod 4)$, one has $i_t=i_l$, $i_{t+1}=i_0$ and $j_t=j_{t+1}$  (i.e., $(i_t,j_t)\to (i_{t+1},j_{t+1})$ is a vertical step from the $i_l$-th row to the $i_0$-th row),
\item for each $t\in\{0,\dots,l-1\}$ with $t\equiv 3 \,\,(\mod 4)$, one has $i_t=i_{t+1}=i_0$ and the entries of $M$ in the positions $(i_t,j_t)$ and $(i_{t+1},j_{t+1})$ are equal (i.e., $(i_t,j_t)\to (i_{t+1},j_{t+1})$ is a horizontal step in the $i_0$-th row).
\end{itemize}
A linkage of length $0$ is thus just a position in the matrix. A linkage of length $1$ connects two positions in the same column of two rows, and so on. So linkages can be represented by the shapes displayed below (up to permutation and duplication of columns) and ordered by increasing length.
$$\xymatrix{ a_0 \\ },\quad \xymatrix{ a_0\ar@{-}[d] \\ a_1},\quad \xymatrix{ a_0\ar@{-}[d] & \\ a_1\ar@{-}[r] & a_1},\quad \xymatrix{ a_0\ar@{-}[d] & a_2\ar@{-}[d] \\ a_1\ar@{-}[r] & a_1},\quad \xymatrix{ a_0\ar@{-}[d] & a_2\ar@{-}[d]\ar@{-}[r] & a_2\\ a_1\ar@{-}[r] & a_1 &},\quad \xymatrix{ a_0\ar@{-}[d] & a_2\ar@{-}[d]\ar@{-}[r] & a_2\ar@{-}[d]\\ a_1\ar@{-}[r] & a_1 & a_3},\cdots$$
According to our definition, a linkage of positive length must start with a vertical step; but up to starting with the path
$$(i_0,j_0),(i_l,j_0),(i_l,j_0),(i_0,j_0),(i_0,j_1)$$
one may consider horizontal steps as first `effective' steps. The reader may have noticed that the essence of what is going on here is the following. For a matrix $M\in\M(n,m,k)$, each row gives rise to an equivalence relation on the set $\{1,\dots,m\}$ --- the kernel relation of the row seen as a map $\{1,\dots,m\}\to\{0,\dots,k-1\}$ (this relates to the geometric interpretation of matrix properties from Remark~\ref{RemA}). The positions $(i,j)$ and $(i',j')$ are connected by a linkage if and only if $j$ and $j'$ fall in the same equivalence class of the join of the two equivalence relations given by the two rows $i$ and~$i'$. Thus, linkage partitions the columns of $M$ into equivalence classes --- we call these the \emph{linkage classes} for the pair of rows. We can see that two positions in two rows are connected by a linkage if and only if, in every two interpretations of the two rows matching in all entries except possibly the extended entry, the positions have the same entries. Indeed, while the `only if part' follows immediately from the definition of a linkage, the `if part' can be seen by interpreting each entry of each of the two rows by the corresponding linkage class in the quotient of $\{1,\dots,m\}$ by the above equivalence relation. Moreover, let us remark that, considering two $0$ entries in positions $(i,j)$ and $(i',j')$ of~$M$, if those two entries are connected by a linkage, then any $0$ entry in the $i$-th row is connected by a linkage to any $0$ entry in the $i'$-th row of~$M$.

\begin{theorem}\label{ThmC}
Given integers $n,k>0$ and $m\geqslant 0$ and a matrix $M\in\M(n,m,k)$, the following conditions are equivalent:
\begin{enumerate}[label=(\arabic*)]
\item\label{ThmC 1} $M$ is functional in every set~$X$.

\item\label{ThmC 2} $M$ is functional in a set having at least two elements.

\item\label{ThmC 3} $M$ is functional in the set $\{0,1\}$.

\item\label{ThmC 4} $M$ is non-empty and does not have a reduction given by any of the following two matrices:
$$\left[\begin{array}{c|c} 1 & 0\end{array}\right],\quad \left[\begin{array}{cc|c} 1 & 0 & 0\\ 0 & 1 & 0\end{array}\right].$$

\item\label{ThmC 5} Every row of $M$ has $0$ as one of its entries and any two $0$ entries in any two distinct rows of $M$ can be connected by a linkage.

\item\label{ThmC 6} Given any two rows of $M$, each has $0$ as an entry such that the two positions can be connected by a linkage.

\item\label{ThmC 7} $\mathbf{Set}^\mathsf{op}$ has $M$-closed relations.

\item\label{ThmC 8} $M$ is not a trivial matrix.
\end{enumerate} 
\end{theorem}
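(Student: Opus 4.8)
The plan is to prove a web of implications threading all eight conditions, organised around the purely combinatorial statement~\ref{ThmC 5}, to which everything else is reduced. I would treat \ref{ThmC 3}--\ref{ThmC 6} as a combinatorial cluster, \ref{ThmC 1}--\ref{ThmC 3} as a ``lifting'' cluster, and then bridge to \ref{ThmC 7} and \ref{ThmC 8} via Theorem~\ref{ThmA} and a direct triviality argument, closing the cycle through \ref{ThmC 7}$\Rightarrow$\ref{ThmC 8}$\Rightarrow$\ref{ThmC 5}. Throughout I would lean on the characterisation stated just before the theorem: $M$ is dysfunctional in a set $X$ exactly when there are two rows $i,i'$ of $M$ (possibly equal) and functions $f_1,f_2\colon\{0,\dots,k-1\}\to X$ with $f_1(a_{i\ell})=f_2(a_{i'\ell})$ for all $\ell$ and $f_1(0)\neq f_2(0)$; and on the fact that two positions are linked precisely when they lie in a common class of the join of the kernel relations of the two rows.

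For the combinatorial cluster, \ref{ThmC 5}$\Leftrightarrow$\ref{ThmC 6} is immediate, since all $0$-entries of a fixed row lie in one class of that row's kernel relation, so one linked pair of $0$-entries forces every pair to be linked. For \ref{ThmC 3}$\Leftrightarrow$\ref{ThmC 5} I would analyse when a dysfunctionality witness over $\{0,1\}$ exists: with $i=i'$ one exists iff row $i$ has no $0$; with $i\neq i'$ and both rows containing a $0$, a witness is exactly a $\{0,1\}$-colouring of the columns constant on the join of the two kernel relations yet separating the two $0$-positions, which exists iff those $0$-entries are unlinked. Hence $M$ is functional in $\{0,1\}$ iff every row has a $0$ and all pairs of $0$-entries are linked, i.e.\ iff~\ref{ThmC 5}. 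The same colouring bookkeeping identifies the forbidden reductions of~\ref{ThmC 4}: a reduction $\left[\begin{array}{c|c}1&0\end{array}\right]$ exists iff some row has no $0$, and, once every row has a $0$, a reduction $\left[\begin{array}{cc|c}1&0&0\\0&1&0\end{array}\right]$ exists iff two rows have unlinked $0$-entries, the anti-matching colouring witnessing it being the complement of the matching one; with the explicit non-emptiness clause this gives \ref{ThmC 4}$\Leftrightarrow$\ref{ThmC 5}. Finally, since this witness analysis uses only that $X$ has two distinct elements, $M$ is functional in an arbitrary $X$ with at least two elements iff~\ref{ThmC 5}; as any matrix is automatically functional in an empty or singleton set, this yields \ref{ThmC 1}$\Leftrightarrow$\ref{ThmC 2}$\Leftrightarrow$\ref{ThmC 3} in one stroke.

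To reach~\ref{ThmC 7}, I would apply Theorem~\ref{ThmA} with $\mathbb{C}=\mathbf{Set}$, which has finite products, finite colimits, and universal epi-factorizations. The equivalence of its conditions (1) and (3) says that, for each set $X$, every internal relation on any object of $\mathbf{Set}^{\mathsf{op}}$ is $M$-closed over $X$ iff $M$ is functional in $X$. Since having $M$-closed relations means being $M$-closed over every object, condition~\ref{ThmC 7} is precisely functionality of $M$ in every set, i.e.\ condition~\ref{ThmC 1}. Then \ref{ThmC 7}$\Rightarrow$\ref{ThmC 8} is immediate: $\mathbf{Set}^{\mathsf{op}}$ is left exact and is not a preorder, so if it has $M$-closed relations then $\mathsf{mclex}\{M\}$ contains a non-preorder and $M$ is not trivial.

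The crux is the remaining implication \ref{ThmC 8}$\Rightarrow$\ref{ThmC 5}, which I would prove contrapositively as: if $M$ fails~\ref{ThmC 5} then $M$ is trivial. If $M$ is empty this is the single-isomorphism-class case of Lemma~\ref{LemA}. If $M$ is non-empty but fails~\ref{ThmC 5}, the cluster above provides a dysfunctionality witness over $\{0,1\}$, namely rows $i,i'$ and matching $f_1,f_2$ with $f_1(0)\neq f_2(0)$. Let $\mathbb{C}$ be any left exact category with $M$-closed relations and $x,y\colon A\to B$ parallel. Consider the diagonal $\Delta\colon B\rightarrowtail B\times B$, an internal binary relation on $B$; as $\mathbb{C}$ has $M$-closed relations, $\Delta$ is $M$-sharp. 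Transporting the witness along $\{0,1\}\to\mathbb{C}(A,B)$, $0\mapsto x$, $1\mapsto y$, turns it into a reduction of $M$ of type $(\mathbb{C}(A,B),\mathbb{C}(A,B))$ whose every left column is a diagonal pair $(z_\ell,z_\ell)$ --- because the witness is matching --- and whose extended column is $(x,y)$ up to swapping rows. Each diagonal pair factors through $\Delta$, so compatibility of $\Delta$ with this reduction forces $(x,y)$ to factor through $\Delta$, i.e.\ $x=y$; hence $\mathbb{C}$ is a preorder and $M$ is trivial. I expect this to be the main obstacle: the combinatorial equivalences are kernel-relation bookkeeping and the Theorem~\ref{ThmA} bridge is essentially a substitution, whereas here one must manufacture, from the purely set-theoretic failure of functionality, a single internal relation (the diagonal) together with an interpretation seeded by the given morphisms $x,y$ that collapses them in an arbitrary left exact category.
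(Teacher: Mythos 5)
Your proposal is correct and takes essentially the same approach as the paper: the same linkage (join-of-kernel-relations) combinatorics tying together \ref{ThmC 3}--\ref{ThmC 6}, Theorem~\ref{ThmA} for \ref{ThmC 1}$\Leftrightarrow$\ref{ThmC 7}, the observation that $\mathbf{Set}^\mathsf{op}$ is not a preorder for \ref{ThmC 7}$\Rightarrow$\ref{ThmC 8}, and a ``forbidden combinatorial structure forces every left exact category with $M$-closed relations to be a preorder'' argument closing the cycle. The only substantive difference is organizational: where the paper routes through \ref{ThmC 8}$\Rightarrow$\ref{ThmC 4} and leaves it as ``easy to see'', you prove \ref{ThmC 8}$\Rightarrow$\ref{ThmC 5} in full, transporting a dysfunctionality witness into an arbitrary category and collapsing parallel pairs via the diagonal relation --- which is exactly the argument the paper has in mind.
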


\begin{proof}
Let us first notice that if $M$ is empty (i.e., if $m=0$), all statements are false and thus are equivalent. Let us now suppose that $M$ is non-empty. The implications \ref{ThmC 1}$\Rightarrow$\ref{ThmC 3}$\Rightarrow$\ref{ThmC 2} are obvious. The following argument proves simultaneously that if \ref{ThmC 5} does not hold then \ref{ThmC 4} does not hold and the latter implies that \ref{ThmC 2} does not hold, giving us the implications \ref{ThmC 2}$\Rightarrow$\ref{ThmC 4}$\Rightarrow$\ref{ThmC 5}. Suppose \ref{ThmC 5} does not hold. If $M$ has a (non-empty) row where $0$ is not an entry, then $$\left[\begin{array}{c|c} 1 & 0\end{array}\right]$$ is a reduction of $M$ (select that row of~$M$, interpret $0$ by $0$ and each element of $\{1,\dots,k-1\}$ by $1$ and delete duplicate columns). If $M$ has such a reduction, it also admits
$$\left[\begin{array}{c|c} 1 & 0\\ 1 & 1 \end{array}\right]$$
as a reduction, which breaks functionality of $M$ in any set with at least two elements. If $M$ has two $0$ entries in two distinct rows that are not connected by a linkage, then the columns of $M$ can be divided into two disjoint sets: 
\begin{itemize}
\item The first set is the linkage class of the given position of $0$ in the first row.

\item The second set is the union of all the remaining linkage classes.
\end{itemize}
This guarantees that $M$ has the reduction
$$\left[\begin{array}{cc|c} 1 & 0 & 0\\ 0 & 1 & 0\end{array}\right]$$
(select those two rows, interpret in the first row the entries from the first set by $0$ and the entries from the second set by $1$ and vice-versa for the second row and permute and delete the columns as necessary). From this reduction, we can then get the undesired reduction
$$\left[\begin{array}{cc|c} 1 & 0 & 0\\ 1 & 0 & 1\end{array}\right]$$
violating functionality of $M$ in any set with at least two elements. \ref{ThmC 5}$\Rightarrow$\ref{ThmC 6} is trivial. Next, we prove \ref{ThmC 6}$\Rightarrow$\ref{ThmC 1}. Suppose \ref{ThmC 1} does not hold. Then $M$ has a reduction of the form
$$\left[\begin{array}
{ccc|c} a_1 & \dots & a_m & b\\ a_1 & \dots & a_m & c \end{array}\right]$$
with $b\neq c$. If either $b$ does not occur in the first row or $c$ in the second, then $0$ must not occur in one of the rows of $M$ and so \ref{ThmC 6} gets violated. If this is not the case and $b$ does occur in the first row of this reduction and $c$ in the second row, since the rows are identical and $b$ and $c$ are distinct, all of these occurrences must come from separate linkage classes. Then \ref{ThmC 6} gets violated again and so \ref{ThmC 6}$\Rightarrow$\ref{ThmC 1}. The equivalence \ref{ThmC 1}$\Leftrightarrow$\ref{ThmC 7} follows immediately from Theorem~\ref{ThmA}. Since $\mathbf{Set}^\mathsf{op}$ is not a preorder, the implication \ref{ThmC 7}$\Rightarrow$\ref{ThmC 8} is straightforward from the definition of a trivial matrix. To complete the proof of the theorem, it remains to prove \ref{ThmC 8}$\Rightarrow$\ref{ThmC 4}. If \ref{ThmC 4} does not hold, since we have supposed that $M$ is non-empty, $M$ admits one of the two matrices mentioned in \ref{ThmC 4} as a reduction. Therefore, in view of Lemma~\ref{Lemma reductions}, every finitely complete category with $M$-closed relations has either $M_1$-closed relations or $M_2$-closed relations with
$$M_1=\left[\begin{array}{c} 1 \end{array}\right] \quad \text{and} \quad M_2=\left[\begin{array}{cc} 1 & 0 \\ 0 & 1 \end{array}\right].$$
If $f$ and $g$ are two parallel morphisms
$$\xymatrix{X \ar@<3.5pt>[r]^-{f} \ar@<-3.5pt>[r]_-{g} & Y}$$
in a finitely complete category, the induced morphisms
$$\xymatrix@C=3pc{X \ar@<3.5pt>[r]^-{\left[\begin{array}{c} f \\ f \end{array}\right]} \ar@<-3.5pt>[r]_-{\left[\begin{array}{c} g \\ g \end{array}\right]} & Y^2}$$
to the power $Y^2$ factor through the diagonal
$$\xymatrix@C=5pc{Y \ar@{ >->}[r]^-{\Delta_Y=\left[\begin{array}{c} 1_Y \\ 1_Y \end{array}\right]} & Y^2.}$$
Since $\Delta_Y$ is a monomorphism, if the category has $M_1$-closed relations (and seeing $\Delta_Y$ as a unary relation on~$Y^2$) or if the category has $M_2$-closed relations (and seeing $\Delta_Y$ as a binary relation between $Y$ and~$Y$), the morphism
$$\xymatrix@C=3pc{X \ar[r]^-{\left[\begin{array}{c} f \\ g \end{array}\right]} & Y^2}$$
also factors through $\Delta_Y$ proving that $f=g$ and thus the category is a preorder. This shows that if \ref{ThmC 4} does not hold, $M$ is a trivial matrix, proving the implication \ref{ThmC 8}$\Rightarrow$\ref{ThmC 4}.
\end{proof}

This theorem already gives a minor classification result: 

\begin{corollary}\label{CorB}
There are only two matrix classes given by a non-empty matrix having one row: the matrix class of finitely complete preorders and the matrix class of all finitely complete categories. There is only one other matrix class given by a non-empty matrix with $2$ rows --- the matrix class of Mal'tsev categories. 
\end{corollary}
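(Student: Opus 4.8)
The plan is to show that the number of rows forces $\mathsf{mclex}\{M\}$ into a very short list, and that the second row buys exactly one new class, namely that of Example~\ref{ExaA}. I would organise everything around two elementary observations. First, if $M$ has a column equal to $[0]_n$, then every internal relation in every category is automatically $M$-closed: in any interpretation that column is interpreted by the extended column itself, so the defining implication of $M$-closedness holds vacuously, whence $\mathsf{mclex}\{M\}$ is the class of all left exact categories. Second, by Lemma~\ref{LemA} together with Theorem~\ref{ThmC}, $M$ is trivial --- equivalently, $\mathsf{mclex}\{M\}$ is the class of left exact preorders --- exactly when some row omits $0$ or the two chosen $0$'s in two rows cannot be linked.

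For a single non-empty row this already settles the first assertion. If the row contains a $0$, that entry is a $[0]_1$ column, so $\mathsf{mclex}\{M\}$ is the class of all left exact categories; if it contains no $0$, then $M$ is trivial by Theorem~\ref{ThmC}, so $\mathsf{mclex}\{M\}$ is the class of left exact preorders. Hence a one-row matrix yields precisely these two classes, and they are distinct.

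For two rows I would split according to the same two observations. If $M$ has a $\left[\begin{smallmatrix}0\\0\end{smallmatrix}\right]$ column we obtain all left exact categories; if $M$ is trivial we obtain left exact preorders. In the remaining case $M$ is non-trivial and has no $\left[\begin{smallmatrix}0\\0\end{smallmatrix}\right]$ column, and the claim is that $\mathsf{mclex}\{M\}=\mathsf{mclex}\{M_0\}$ for the Mal'tsev matrix $M_0=\left[\begin{smallmatrix}0&1&1\\1&1&0\end{smallmatrix}\right]$ of Example~\ref{ExaA}. I would prove the two inclusions separately. For $\mathsf{mclex}\{M\}\subseteq\mathsf{mclex}\{M_0\}$, observe that every interpretation of $M_0$, say with rows $(x_0,x_1,x_1)$ and $(y_1,y_1,y_0)$ and extended column $\left[\begin{smallmatrix}x_0\\y_0\end{smallmatrix}\right]$, is a reduction of $M$: interpret the first row of $M$ by the function sending $0$ to $x_0$ and every nonzero entry to $x_1$, and the second row by the function sending $0$ to $y_0$ and every nonzero entry to $y_1$. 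Since $M$ has no $\left[\begin{smallmatrix}0\\0\end{smallmatrix}\right]$ column, each column of $M$ is sent to one of the three columns above (and, using non-triviality, all three are realised), while the extended column is preserved; hence, a fortiori, compatibility with this reduction yields the difunctionality instance, so every left exact category with $M$-closed relations is a Mal'tsev category.

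For the reverse inclusion $\mathsf{mclex}\{M_0\}\subseteq\mathsf{mclex}\{M\}$ I would exploit the structure of difunctional relations: a relation is difunctional exactly when its associated bipartite graph --- with vertices the two families of generalised elements and edges the related pairs --- is a disjoint union of complete bipartite graphs. Given an interpretation of $M$ all of whose columns lie in a difunctional relation $R$, the extended column is $\left[\begin{smallmatrix}f_1(0)\\f_2(0)\end{smallmatrix}\right]$; by Theorem~\ref{ThmC} non-triviality supplies a linkage between the $0$ of the first row and the $0$ of the second, and reading its vertical steps as edges and its horizontal steps as stationary moves exhibits a walk in this bipartite graph joining $f_1(0)$ to $f_2(0)$. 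The two thus lie in one complete bipartite component, so the extended pair belongs to $R$, i.e. $R$ is $M$-closed. Replaying this walk argument \emph{internally} --- the classical difunctionality chase in an arbitrary left exact category, phrased through generalised elements --- is the step I expect to be the main obstacle; the rest is bookkeeping with the invariances and the comparison principles recalled before Theorem~\ref{ThmC}. Combining the three cases shows that the only class produced by a two-row matrix beyond the two one-row classes is that of Mal'tsev categories.
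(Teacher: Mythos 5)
Your proof is correct, and its skeleton matches the paper's (split into: a $[0]_2$ column, triviality, and the remaining case; then two inclusions against the Mal'tsev matrix $M_0$), but both inclusions are argued by different mechanisms. For $\mathsf{mclex}\{M\}\subseteq\mathsf{mclex}\{M_0\}$, the paper first shows that the shortest linkage joining zeros of the two rows has length $\equiv 1 \pmod 4$, and for length at least $5$ that binarizing $M$ (replacing every nonzero entry by $1$) produces \emph{exactly} the Mal'tsev matrix as a reduction; your collapse of nonzero entries onto $x_1$, resp.\ $y_1$, is the same reduction, but you rightly note that one does not need all three Mal'tsev columns to be realised (compatibility applies a fortiori), so the whole mod-$4$ minimality analysis becomes dispensable for this direction. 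For the reverse inclusion, the paper introduces the zigzag matrices $M_{l'}$, observes that the columns visited by the linkage form a reduction of $M_{l'}$, and proves by induction on $l'$ that every Mal'tsev category has $M_{l'}$-closed relations; you instead invoke the characterization of difunctional relations as disjoint unions of complete bipartite graphs and read the linkage as a walk. These are the same computation --- the induction on $l'$ is precisely the unrolled walk --- but your version avoids setting up the auxiliary family $M_{l'}$, at the cost of importing the graph-theoretic characterization. Finally, the step you flag as the main obstacle (replaying the walk ``internally'') is not an obstacle at all: every closedness notion in the paper is defined through interpretations valued in hom-sets $\mathbb{C}(X,C_i)$, and an internal binary relation $r$ is strictly $M_0$-closed precisely when each induced set relation $R_X \subseteq \mathbb{C}(X,C_1)\times\mathbb{C}(X,C_2)$ is difunctional as a relation of sets; so your set-level walk argument, applied to each $R_X$ separately, already gives strict $M$-closedness over every object $X$, and no internalization beyond this Yoneda-style bookkeeping is required.
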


\begin{proof}
If the unique row of a non-empty matrix $M$ contains~$0$, then every category has $M$-closed relations. If $0$ does not appear in the unique row, then $M$ is trivial by Theorem~\ref{ThmC}. By Lemma~\ref{LemA}, this then gives the matrix class of finitely complete preorders. This proves the first part of the corollary. If a non-empty matrix $M\in\M(2,m,k)$ is non-trivial, then by Theorem~\ref{ThmC}, each of the two rows has an entry $0$ connected to each other by a linkage. The length $l$ of the shortest such linkage cannot be of the form $4l'$ or $4l'+3$, where $l'$ is a non-negative integer, since the two considered positions are not in the same row. It is also easy to see that, by minimality of~$l$, it cannot be of the form $4l'+2$. We thus have $l=4l'+1$ for some integer $l'\geqslant 0$. If $l=1$, then the matrix class is the collection of all finitely complete categories since $M$ admits
$$\left[\begin{array}{c} 0 \\ 0\end{array}\right]$$
as one of its columns. If $l \geqslant 5$, let us consider the reduction of $M$ obtained by substituting $1$ in the place of each non-zero entry and deleting duplicate (left) columns. If the reduced matrix has more columns than the Mal'tsev matrix (Example~\ref{ExaA})
$$\left[\begin{array}{ccc} 0 & 1 & 1 \\ 1 & 1 & 0\end{array}\right]$$
then the only possibility is that it has a column of~$0$'s, which is impossible since by reduction no new $0$ entries were created and this would contradict the minimality of~$l$. So the above matrix is a reduction of $M$ and hence, every category in the matrix class is a Mal'tsev category by Lemma~\ref{Lemma reductions}. Conversely, let us prove that the matrix class contains all Mal'tsev categories. Let us denote by $M'$ the matrix formed by the columns through which the considered shortest linkage
$$(i_0,j_0),(i_1,j_1),\dots,(i_{4l'+1},j_{4l'+1})$$
passes. We thus have $\mathsf{mclex}\{M'\}\subseteq\mathsf{mclex}\{M\}$. This matrix $M'$ is a reduction of the matrix
$$M_{l'}=\left[\begin{array}{ccccccccc} 0 & 1 & 1 & 2 & 2 & \cdots & \cdots & l' & l' \\ 1 & 1 & 2 & 2 & \cdots & \cdots & l' & l' & 0\end{array}\right];$$
this can be seen using the row-wise interpretation given by the functions $$f_1,f_2\colon\{0,\dots,l'\}\to\{0,\dots,k-1\}$$ defined for each $t\in\{0,\dots,l'\}$ by
$$f_1(t)=a_{(i_{4t},j_{4t})}$$
and
$$f_2(t)=\begin{cases} 0 & \text{if }t=0\\ a_{(i_{4t-2},j_{4t-2})} & \text{if }t>0\end{cases}$$
where the $a_{ij}$'s denote the entries of~$M$. Since $f_1(0)=f_2(0)=0$, by Lemma~\ref{Lemma reductions}, we know that $\mathsf{mclex}\{M_{l'}\}\subseteq\mathsf{mclex}\{M'\}$. Finally, using induction on~$l'$, it is easy to prove that each Mal'tsev category has $M_{l'}$-closed relations for each $l'\geqslant 1$. This proves that the matrix class $\mathsf{mclex}\{M\}$ contains all Mal'tsev categories.
\end{proof}

A matrix $M\in\M(n,m,k)$ is said to be \emph{anti-trivial} if any finitely complete category has $M$-closed relations. We invite the reader to compare the equivalence \ref{ThmC 7}$\Leftrightarrow$\ref{ThmC 8} of Theorem~\ref{ThmC} with the equivalence \ref{anti-trivial 2}$\Leftrightarrow$\ref{anti-trivial 3} of the following theorem.

\begin{theorem}\label{theorem anti-trivial}
Given integers $n,k>0$ and $m\geqslant 0$ and a matrix $M\in\M(n,m,k)$, the following conditions are equivalent:
\begin{enumerate}[label=(\arabic*)]
\item\label{anti-trivial 1} $M$ has $[0]_n$ among its columns.

\item\label{anti-trivial 2} $\mathbf{Set}$ has $M$-closed relations.

\item\label{anti-trivial 3} $M$ is anti-trivial.
\end{enumerate}
\end{theorem}

\begin{proof}
The implication \ref{anti-trivial 1}$\Rightarrow$\ref{anti-trivial 3} follows from the definition of $M$-closedness and the implication \ref{anti-trivial 3}$\Rightarrow$\ref{anti-trivial 2} is an immediate consequence of the definition of anti-trivial matrices. For the remaining implication \ref{anti-trivial 2}$\Rightarrow$\ref{anti-trivial 1}, we suppose that $\mathbf{Set}$ has $M$-closed relations. We consider the $n$-ary relation $R$ on $\{0,\dots,k-1\}$ formed by the columns of~$M$. Since this relation is $M$-closed, considering the interpretation of $M$ given by the identity function on $\{0,\dots,k-1\}$, we get that $[0]_n$ must belong to~$R$, i.e., to the columns of~$M$.
\end{proof}

Note that due to the Yoneda embedding, the equivalence \ref{anti-trivial 2}$\Leftrightarrow$\ref{anti-trivial 3} in the theorem above can be established directly without going through the condition~\ref{anti-trivial 1}.

\section{The algorithm}\label{sec3}

By a \emph{matrix set} we mean a subset $S$ of the union
$$\bigcup_{\substack{n > 0\\m \geqslant 0\\k>0}} \M(n,m,k).$$
We denote by $\mathsf{mclex}S$ the collection of all finitely complete categories which have $M$-closed relations for each matrix $M$ in~$S$. An \emph{$S$-injective object} in a category $\mathbb{C}$ is an object $X$ such that for every matrix $M\in S$ with any number $n$ of rows, every internal $n$-ary relation on any object in $\mathbb{C}^\mathsf{op}$ is $M$-closed over~$X$. If $\mathbb{C}$ is finitely cocomplete (i.e., if $\mathbb{C}^\mathsf{op}$ is finitely complete), this is equivalent to require that for every matrix $M\in S$ with any number $n$ of rows, every internal $n$-ary relation in $\mathbb{C}^\mathsf{op}$ is strictly $M$-closed over~$X$. The terminal object in a category, when it exists, is always $S$-injective. In fact, $S$-injective objects are closed under all limits that exist in the category. This follows from the fact that, given morphisms $(r_i\colon C\to R)_{1\leqslant i\leqslant n}$ in $\mathbb{C}$ representing a relation in $\mathbb{C}^\mathsf{op}$, the legs $(\pi_j\colon L \to X_j)_{j\in J}$ of a limit in~$\mathbb{C}$ and a family of morphisms $(f_i\colon C\to L)_{1\leqslant i\leqslant n}$, there exists a morphism $g\colon R\to L$ such that $gr_i=f_i$ for each $i\in\{1,\dots,n\}$ if and only if, for each $j\in J$, there exists a morphism $g_j\colon R\to X_j$ such that $g_jr_i=\pi_jf_i$ for each $i\in\{1,\dots,n\}$. We write $\mathsf{Inj}_S\mathbb{C}$ for the full subcategory of $\mathbb{C}$ consisting of all $S$-injective objects. We say that $\mathbb{C}$ \emph{has enough $S$-injective objects} if for any object $C$ in $\mathbb{C}$ there is a monomorphism $C\rightarrowtail D$ such that $D$ is $S$-injective.

For instance, as shown in~\cite{Weighill2017}, when $\mathbb{C}$ is the category of topological spaces and $S=\{M\}$, where $M$ is the Mal'tsev matrix (Example~\ref{ExaA}), $S$-injective objects are `$R_1$-spaces'~\cite{Davis1961}, i.e., topological spaces in which if one point can be separated from another by an open set, then the two points can be separated from each other by disjoint open sets. In contrast, every topological space is $S$-injective with $S=\{M\}$, when $M$ is the majority matrix (Example~\ref{ExaB}) (in other words, the dual of the category of topological spaces is a majority category~\cite{Hoefnagel2019a}). The category of topological spaces has thus enough $S$-injective objects for the two mentioned matrix sets~$S$. 

The following result is obtained by an adaptation of ideas from~\cite{Weighill2017}:

\begin{theorem}\label{ThmB}
Let $\mathbb{C}$ be a category having finite limits and finite colimits, where every morphism factorises as an epimorphism followed by an equaliser. Consider two matrix sets $S\subseteq T$. If $\mathbb{C}$ has enough $T$-injective objects, then $\mathsf{Inj}_S\mathbb{C}$ is the largest full subcategory of $\mathbb{C}$ among those that contain all $T$-injective objects, are closed under finite limits, and whose dual categories have $M$-closed relations for every $M\in S$.
\end{theorem}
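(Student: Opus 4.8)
The plan is to establish maximality through the two inclusions it unpacks into: (A) that $\mathsf{Inj}_S\mathbb{C}$ itself has the three listed properties, and (B) that every full subcategory $\mathbb{D}\subseteq\mathbb{C}$ having those three properties is contained in $\mathsf{Inj}_S\mathbb{C}$. For (A), the first property is immediate, since $S\subseteq T$ forces every $T$-injective object to be $S$-injective, whence $\mathsf{Inj}_T\mathbb{C}\subseteq\mathsf{Inj}_S\mathbb{C}$; the second is the already recorded fact that $S$-injective objects are closed under the finite limits existing in $\mathbb{C}$. All of the substance sits in the third property of (A) and in (B), and both are driven by Theorem~\ref{ThmA}, which identifies $S$-injectivity of an object $X$ with $M$-functionality in $X$ for every $M\in S$.

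The technical heart will be two bridging lemmas, valid for any full subcategory $\mathbb{D}$ that contains all $T$-injectives and is closed under finite limits. Both use the same device: given a parallel pair with codomain $W$ in $\mathbb{C}$, embed $W\rightarrowtail E_W$ into a $T$-injective (hence an object of $\mathbb{D}$) using that $\mathbb{C}$ has enough $T$-injectives, and post-compose; since $E_W$ then lies in $\mathbb{D}$, the (co)equalizing data can be tested against $\mathbb{D}$. \textbf{Lemma L1:} every regular subobject $B\rightarrowtail A$ of an object $A\in\mathbb{D}$, say an equalizer of $g,h\colon A\rightrightarrows W$, again lies in $\mathbb{D}$; indeed $B$ is also the equalizer of $\iota_W g,\iota_W h\colon A\rightrightarrows E_W$, a pair between the objects $A,E_W\in\mathbb{D}$, and $\mathbb{D}$ is closed under equalizers. \textbf{Lemma L2:} if $R\in\mathbb{D}$ and a family $r_i\colon C\to R$ is jointly epic in $\mathbb{D}$, then the induced $\rho\colon nC\to R$ is an epimorphism in $\mathbb{C}$; factoring $\rho$ as an epimorphism followed by an equalizer $m=\mathrm{eq}(g,h)$ and embedding the codomain of $g,h$ into a $T$-injective collapses $m$ to an isomorphism, because $g\rho=h\rho$ together with joint epicness in $\mathbb{D}$ forces $g=h$. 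Consequently, when objects and relation-objects are taken in $\mathbb{D}$, internal relations in $\mathbb{D}^{\mathsf{op}}$ coincide with internal relations in $\mathbb{C}^{\mathsf{op}}$, and the two notions of $M$-closedness over an object of $\mathbb{D}$ agree, the relevant hom-sets being equal by fullness.

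With L2 the third property of (A) is quick: an internal $n$-ary relation in $(\mathsf{Inj}_S\mathbb{C})^{\mathsf{op}}$ is, by L2, a genuine internal relation in $\mathbb{C}^{\mathsf{op}}$, and since every object of $\mathsf{Inj}_S\mathbb{C}$ is $S$-injective, such a relation is $M$-closed over every object of $\mathsf{Inj}_S\mathbb{C}$ for each $M\in S$ — exactly what the property asserts. For (B), fix $X\in\mathbb{D}$ and $M\in\M(n,m,k)$ with $M\in S$, form the canonical interpretation $\pi_M^X\colon nX^k\to X^m$ in $\mathbb{C}$ together with its universal factorization $\pi_M^X=i_M^X r_M^X$. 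Since $X^m\in\mathbb{D}$ and $i_M^X$ is an equalizer, Lemma L1 gives $R_M^X\in\mathbb{D}$; hence the epimorphism $r_M^X\colon nX^k\to R_M^X$ presents an internal relation in $\mathbb{D}^{\mathsf{op}}$ on $X^k$. Property three makes this relation $M$-closed over $X$. Feeding in the interpretation whose $l$-th structure map is the projection $\pi_{l+1}\colon X^k\to X$, the rows lift \emph{automatically} via the $\mathbb{D}$-morphisms $\pi_j i_M^X\colon R_M^X\to X$, so closedness yields $p_M^X\colon R_M^X\to X$ with $p_M^X r_M^X=\pi_{[0]_n}^X$. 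This is precisely $M$-functionality of $X$, so $X$ is $S$-injective by Theorem~\ref{ThmA}; as $X$ was arbitrary, $\mathbb{D}\subseteq\mathsf{Inj}_S\mathbb{C}$.

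The main obstacle I expect is Lemma L1 (with its companion L2): the claim that the image $R_M^X$ of a canonical interpretation, although built by a factorization that could a priori leave $\mathbb{D}$, in fact lies in $\mathbb{D}$. This is where all three ambient hypotheses are consumed simultaneously — the epimorphism-followed-by-equalizer factorization (to realize $R_M^X$ as a regular subobject of $X^m$), closure of $\mathbb{D}$ under finite limits (to keep the equalizer inside $\mathbb{D}$), and enough $T$-injectives (to drag the codomain of the defining pair into $\mathbb{D}$). Once $R_M^X\in\mathbb{D}$ is secured, the remainder is bookkeeping with Theorem~\ref{ThmA} plus the tautological lifting of the projection interpretation. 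I would be careful about two routine but load-bearing points: that the universal factorization genuinely presents $i_M^X$ as an equalizer of a pair whose codomain can first be located in $\mathbb{C}$ and then pushed into $\mathbb{D}$, and that ``jointly epic in $\mathbb{D}$'' and ``jointly epic in $\mathbb{C}$'' really coincide for relation-objects in $\mathbb{D}$, so that property three and $S$-injectivity speak about the same closedness condition.
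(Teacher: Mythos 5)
Your proposal is correct and takes essentially the same route as the paper: the same decomposition into the three properties plus maximality, the same key device of embedding codomains of equalizer-defining pairs into $T$-injectives (your L1/L2) both to keep the image $R^X_M$ inside the subcategory and to show that internal relations in the subcategory's dual remain internal relations in $\mathbb{C}^{\mathsf{op}}$, and the same application of Theorem~\ref{ThmA} via the projection interpretation to get functionality. The only cosmetic difference is in L2, where you deduce $g=h$ directly from joint epicness tested against the $T$-injective, whereas the paper argues that the equalizer part of the factorization is simultaneously an epimorphism and an equalizer in $\mathsf{Inj}_S\mathbb{C}$ and hence an isomorphism.
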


\begin{proof}
A factorisation $f=gh$ of a morphism $f\colon X\to Y$ into an epimorphism $h$ followed by an equaliser $g$ is a universal epi-factorisation of~$f$. This is a consequence of the fact that any equaliser $g$ is a `strong monomorphism', meaning that for any commutative diagram of solid morphisms
$$\xymatrix{ W\ar[r]\ar@{->>}[d]_-{h'} & Z\ar@{ >->}[d]^-{g} \\ V\ar@{-->}[ur]^-{d}\ar[r] & Y }$$
where $h'$ is an epimorphism, there exists a unique dashed morphism $d$ keeping the diagram commutative. So, in view of Theorem~\ref{ThmA}, a matrix $M$ with $n$ rows is functional in an object $X$ if and only if every $n$-ary internal relation on any object in $\mathbb{C}^\mathsf{op}$ is $M$-closed over~$X$. As mentioned above, it is not difficult to see that $\mathsf{Inj}_S\mathbb{C}$ is closed under finite limits in~$\mathbb{C}$. Any object that is a $T$-injective object is an $S$-injective object, and so $\mathsf{Inj}_S\mathbb{C}$ contains all $T$-injective objects. To get convinced that $(\mathsf{Inj}_S\mathbb{C})^\mathsf{op}$ has $M$-closed relations for every $M\in S$, it suffices to check that any internal relation in $(\mathsf{Inj}_S\mathbb{C})^\mathsf{op}$ will remain an internal relation in the bigger category~$\mathbb{C}^\mathsf{op}$. So consider an internal relation $$r=\left[\begin{array}{c} r_1 \\ \vdots \\ r_n \end{array}\right],\quad r_i\colon R\to C_i\textrm{ for }i\in\{1,\dots,n\},$$
in $(\mathsf{Inj}_S\mathbb{C})^\mathsf{op}$, where $n$ is any positive integer. Now, in the category~$\mathbb{C}$, where each $r_i$ has opposite direction, decompose $r'=gh$ the induced morphism $r'\colon C_1+\dotsc+C_n\to R$ in $\mathbb{C}$ as an epimorphism $h$ followed by an equaliser~$g$. Let $f_1,f_2$ be the two morphisms of which $g$ is an equaliser. Let $D$ denote their common codomain. Consider a monomorphism $e\colon D\rightarrowtail E$ such that $E$ is a $T$-injective object. Then $g$ is still an equaliser of $ef_1,ef_2$, and since both $R$ and $E$ belong to $\mathsf{Inj}_S\mathbb{C}$, so does the domain of~$g$, and hence $g$ itself. Moreover, $g$ is an equaliser in~$\mathsf{Inj}_S\mathbb{C}$. On the other hand, since each $r_i$ factorises through~$g$, we get that $g$ is an epimorphism in $\mathsf{Inj}_S\mathbb{C}$. Being an epimorphism and an equaliser, it must be an isomorphism. Then $r'$ is an epimorphism and so $r$ is an internal relation in~$\mathbb{C}^\mathsf{op}$. This means that $(\mathsf{Inj}_S\mathbb{C})^\mathsf{op}$ has $M$-closed relations for every $M\in S$. It remains to show that any full subcategory of $\mathbb{C}$ containing all $T$-injective objects, closed under finite limits in $\mathbb{C}$ and the dual category having $M$-closed relations for each $M\in S$, is part of $\mathsf{Inj}_S\mathbb{C}$. Let $X$ be an object in such a subcategory~$\mathbb{D}$. Consider a matrix $M$ in $S\cap\M(n,m,k)$. We want to prove that $M$ is functional in~$X$. Consider the diagram
$$\xymatrix{
X & &\\ & & n X^k\ar@{->>}[ld]_-{r^X_M}\ar[rd]^-{\pi^X_M}\ar[llu]_-{\pi_{[0]_n}^X} & \\
& R_M^X\ar@{-->}[luu]^-{p^X_M}\ar@{ >->}[rr]_-{i^X_M} & & X^m
}$$
of solid morphisms in~$\mathbb{C}$ (as in Theorem~\ref{ThmA}). Since $\mathbb{D}$ is closed under finite limits, the objects $X^k$ and $X^m$ are in~$\mathbb{D}$, and moreover, $X^m$ is an $m$-fold product of $X$ in~$\mathbb{D}$. Furthermore, since $i_M^X$ is an equaliser, similar to the earlier argument with monomorphisms into $T$-injective objects, it will be an equaliser in~$\mathbb{D}$. Composing $r_M^X$ with sum (coproduct) injections $\iota_i\colon X^k\to nX^k$, we get an internal $n$-ary relation on $X^k$ in $\mathbb{C}^\mathsf{op}$ and hence in~$\mathbb{D}^\mathsf{op}$. Since $\mathbb{D}^\mathsf{op}$ has $M$-closed relations, there will be a morphism $p_M^X$ such that $p_M^Xr_M^X\iota_i=\pi_1$ for each $i\in\{1,\dots,n\}$. This then gives commutativity of the left triangle in the diagram above. Therefore, $M$ is functional in~$X$. The proof is now complete.         
\end{proof}

The following is an adaptation of ideas from Section~3 of~\cite{Hoefnagel2019a}. For each non-zero natural number~$n$, we consider the category $\mathbf{Rel}_n$ whose objects are pairs $(X,R)$ where $X$ is a set and $R$ is an $n$-ary relation on~$X$, and morphisms $(X,R)\to (X',R')$ are relation-preserving functions, i.e., functions $f\colon X\to X'$ such that there exists a (necessarily unique) dashed morphism rendering the diagram
$$\xymatrix{R \ar@{}[d]|(.21){}="A" \ar@{^{(}->}@<-2pt>"A";[d] \ar@{-->}[r] & R' \ar@{}[d]|(.21){}="B" \ar@{^{(}->}@<-2pt>"B";[d] \\ X^n \ar[r]_-{f^n} & X'^{\,n}}$$
commutative. The forgetful functor $\mathbf{Rel}_n\to\mathbf{Set}$ is a topological functor (see e.g.~\cite{Borceux1994b}) and therefore $\mathbf{Rel}_n$ is a complete and cocomplete category. Moreover, each morphism $f\colon (X,R)\to (X',R')$ in $\mathbf{Rel}_n$ factors as
$$\xymatrix{(X,R) \ar@{->>}[r]^-{f'} & (\mathsf{Im}\, f,(\mathsf{Im}\, f)^n\cap R') \ar@{ >->}[r]^-{i} & (X',R')}$$
where $\mathsf{Im}\, f$ is the image of the function~$f$, $i$ is the inclusion of this image in the codomain of~$f$ and $f'$ is the corestriction of~$f$. Since $f'$ is surjective it is an epimorphism in $\mathbf{Rel}_n$, and $i$ is the equaliser of
$$\xymatrix{(X',R') \ar@<3.5pt>[r]^-{g_1} \ar@<-3.5pt>[r]_-{g_2} & (Q,Q^n)}$$
where $g_1,g_2$ is the cokernel pair of $i$ in~$\mathbf{Set}$. Therefore, each morphism in $\mathbf{Rel}_n$ factors as an epimorphism followed by an equaliser and, in particular, $\mathbf{Rel}_n$ has universal epi-factorisations.

\begin{lemma}\label{LemB}
Given integers $n,n',k>0$ and $m\geqslant 0$, an object $(X,R)$ of $\mathbf{Rel}_n$ and a matrix $M\in\M(n',m,k)$, the following statements are equivalent:
\begin{enumerate}[label=(\arabic*)]
\item\label{Lem B 1} $(X,R)$ is an $\{M\}$-injective object in $\mathbf{Rel}_n$.
\item\label{Lem B 2} $M$ is functional in $(X,R)$.
\item\label{Lem B 3} $M$ is functional in the set $X$ and $R$ is $M$-sharp.
\end{enumerate}
Moreover, if $M$ is a non-trivial matrix, these are further equivalent to:
\begin{enumerate}[label=(\arabic*), resume]
\item\label{Lem B 4} $R$ is $M$-sharp.
\end{enumerate}
\end{lemma}

\begin{proof}
The equivalence \ref{Lem B 1}$\Leftrightarrow$\ref{Lem B 2} is a direct application of Theorem~\ref{ThmA}. To make explicit what \ref{Lem B 2} means, let us consider the following commutative diagram of plain morphisms in $\mathbf{Rel}_n$,
$$\xymatrix{
(X,R) & &\\ & & (n' X^k,n' R^k)\ar@{->>}[ld]_-{r^X_M}\ar[rd]^-{\pi^X_M}\ar[llu]_-{\pi_{[0]_{n'}}^X} & \\
& (\mathsf{Im}\, \pi^X_M,(\mathsf{Im}\, \pi^X_M)^n\cap R^m)\ar@{-->}[luu]^-{p^X_M}\ar@{ >->}[rr]_-{i^X_M} & & (X^m,R^m)
}$$
where $R^m$ is seen as a subset of $(X^m)^n$ via the canonical injection $R^m \rightarrowtail (X^n)^m \cong (X^m)^n$ and $n' R^k$ is seen as a subset of $(n' X^k)^n$ via the canonical injection $n' R^k \rightarrowtail n' (X^n)^k \cong n' (X^k)^n \rightarrowtail (n' X^k)^n$. Then, \ref{Lem B 2} means that there is a morphism $p^X_M$ in $\mathbf{Rel}_n$ retaining the commutativity of the diagram. The existence of a function $p^X_M$ retaining the commutativity of the diagram is exactly the definition of $M$ being functional in the set~$X$. For this function to be a morphism in $\mathbf{Rel}_n$, it means that for each $n$-tuple $(x_1,\dots,x_n)$ of elements in $n'X^k$ such that $(r^X_M(x_1),\dots,r^X_M(x_n))$ is in~$R^m$, then $(\pi_{[0]_{n'}}^X(x_1),\dots,\pi_{[0]_{n'}}^X(x_n))$ is in~$R$. Denoting the canonical injections $X^k\to n'X^k$ by $\iota_1,\dots,\iota_{n'}$, this condition can be equivalently written as: for each $(i_1,\dots,i_n)\in\{1,\dots,n'\}^n$ and each $(x_1,\dots,x_n)\in (X^k)^n$ such that $(\pi^X_M(\iota_{i_1}(x_1)),\dots,\pi^X_M(\iota_{i_n}(x_n)))$ is in~$R^m$, then $(\pi_{[0]_{n'}}^X(\iota_{i_1}(x_1)),\dots,\pi_{[0]_{n'}}^X(\iota_{i_n}(x_n)))$ is in~$R$. Denoting the entries of $M$ as in $M=[a_{ij}]_{i,j}$, we can further reformulate this condition as follows: for each $(i_1,\dots,i_n)\in\{1,\dots,n'\}^n$ and each matrix
$$\left[\begin{array}{ccc} x_{10} & \dots & x_{1\, k-1} \\ \vdots & & \vdots \\ x_{n0} & \dots & x_{n\, k-1} \end{array}\right]$$
of elements of~$X$, if the columns of the matrix
$$\left[\begin{array}{ccc} x_{1a_{i_11}} & \dots & x_{1a_{i_1m}} \\ \vdots & & \vdots \\ x_{na_{i_n1}} & \dots & x_{na_{i_nm}} \end{array}\right]$$
are elements of $R$ then so is
$$\left[\begin{array}{c} x_{10} \\ \vdots \\ x_{n0} \end{array}\right].$$
This condition is exactly expressing that $R$ is $M$-sharp, proving the equivalence \ref{Lem B 2}$\Leftrightarrow$\ref{Lem B 3}. Finally, if $M$ is not a trivial matrix, $M$ is always functional in $X$ by Theorem~\ref{ThmC}, proving the equivalence \ref{Lem B 3}$\Leftrightarrow$\ref{Lem B 4} in that case.
\end{proof}

We can then see that for a non-negative integer $n$ and a matrix set~$S$, the category $\mathsf{Inj}_S\mathbf{Rel}_n$ forms a (full) reflective subcategory of $\mathbf{Rel}_n$. Given an object $(X,R)$ in $\mathbf{Rel}_n$, its reflection $f\colon (X,R)\to (X',R')$ in the subcategory $\mathsf{Inj}_S\mathbf{Rel}_n$ is obtained as follows:
\begin{itemize}
\item If $S$ contains a trivial matrix, then $X'=X$ if $X$ is the empty set and $X'$ is a singleton if $X$ is non-empty. The function $f$ is the unique function $f\colon X\to X'$.

\item If $S$ does not contain a trivial matrix, then $X'=X$. In this case, $f$ is the identity function $f=1_X$.
\end{itemize}
We then define $R'\subseteq X'^{\, n}$ as the intersection of all relations containing $f^n(R)$ as a subrelation and which are $M$-sharp for each matrix $M$ in~$S$. Therefore, a colimit of a small diagram in $\mathsf{Inj}_S\mathbf{Rel}_n$ can be obtained by applying the construction above to the colimit of the same diagram in $\mathbf{Rel}_n$. In particular, each $(\mathsf{Inj}_S\mathbf{Rel}_n)^\mathsf{op}$ is a finitely complete category. These remarks together with Theorem~\ref{ThmB} and Lemma~\ref{LemB} bring us to the following result:

\begin{theorem}\label{ThmD}
Consider two matrix sets $S$ and~$U$. If for any non-zero natural number~$n$, every $n$-ary relation $R$ on any set $X$ that is $M$-sharp for every matrix $M$ in $S$ is $N$-closed for every matrix $N$ in $U$ having $n$ rows, then every finitely complete category that has $M$-closed relations for every $M$ in $S$ also has $N$-closed relations for every $N$ in~$U$, i.e., $\mathsf{mclex}S\subseteq\mathsf{mclex}U$. The converse is also true when no matrix in $S$ is trivial.  
\end{theorem}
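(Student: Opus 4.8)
The plan is to treat the two implications separately: the forward one is an elementary ``pointwise'' argument that transfers the set-theoretic hypothesis to an arbitrary left exact category through the representable functors $\mathbb{C}(X,-)$, while the converse exploits the concrete test categories $(\mathsf{Inj}_S\mathbf{Rel}_n)^\mathsf{op}$ together with Theorem~\ref{ThmB}.

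For the forward direction I would fix $\mathbb{C}\in\mathsf{mclex}S$, a matrix $N\in U$ with $n$ rows, and an internal $n$-ary relation $\rho$ of domain $R$ on an object $C$ of $\mathbb{C}$, and show that $\rho$ is $N$-closed over every object $X$. The key observation is that for each $X$ the image of $\mathbb{C}(X,R)\to\mathbb{C}(X,C)^n$ is an ordinary $n$-ary relation $\rho(X)$ on the set $\mathbb{C}(X,C)$, and that unwinding the definitions (using that $\mathbb{C}(X,-)$ preserves products and monomorphisms) gives two correspondences: first, $\rho$ is $N$-closed over $X$ exactly when the set-relation $\rho(X)$ is $N$-closed; second, $\rho$ is strictly $M'$-closed over $X$ exactly when $\rho(X)$ is. Since $\mathbb{C}$ is left exact and has $M$-closed relations, every internal relation in it is $M$-sharp, so $\rho(X)$ is $M$-sharp for every $M\in S$. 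Applying the hypothesis to $\rho(X)$ shows it is $N$-closed, whence $\rho$ is $N$-closed over $X$; letting $X$, $\rho$ and $N$ vary gives $\mathbb{C}\in\mathsf{mclex}U$.

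For the converse, assume $\mathsf{mclex}S\subseteq\mathsf{mclex}U$ with no trivial matrix in $S$, fix $n>0$, and set $\mathbb{D}_n=(\mathsf{Inj}_S\mathbf{Rel}_n)^\mathsf{op}$. I would first check that $\mathbb{D}_n\in\mathsf{mclex}S$: it is left exact by the discussion preceding the theorem, and it has $M$-closed relations for every $M\in S$ by Theorem~\ref{ThmB} applied with $T=S$. The hypotheses of Theorem~\ref{ThmB} hold because $\mathbf{Rel}_n$ is complete, cocomplete and, being topological over $\mathbf{Set}$, admits epimorphism-followed-by-equalizer factorizations, and because $\mathbf{Rel}_n$ has enough $S$-injective objects: when $S$ has no trivial matrix the reflection of $(X,R)$ into $\mathsf{Inj}_S\mathbf{Rel}_n$ is the identity on the underlying set and hence a monomorphism into an $S$-injective object. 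By the assumed inclusion, $\mathbb{D}_n\in\mathsf{mclex}U$, so $\mathbb{D}_n$ has $N$-closed relations for every $N\in U$ with $n$ rows.

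It remains to decode this into the combinatorial statement, which is where the main work lies. Given an $n$-ary relation $R$ on a set $X$ that is $M$-sharp for every $M\in S$, the absence of trivial matrices in $S$ together with Theorem~\ref{ThmC} makes each such $M$ functional in $X$, so $(X,R)$ is an $S$-injective object, i.e.\ an object of $\mathsf{Inj}_S\mathbf{Rel}_n=\mathbb{D}_n^\mathsf{op}$. Since $\mathbb{D}_n$ has $N$-closed relations, Theorem~\ref{ThmA} applied in $\mathsf{Inj}_S\mathbf{Rel}_n$ forces $N$ to be functional in $(X,R)$, and the dictionary recorded before the theorem (that $N$ is functional in $(X,R)$ iff $N$ is functional in $X$ and $R$ is $N$-sharp) then yields that $R$ is $N$-sharp, and in particular $N$-closed. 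I expect the main obstacle to be the compatibility of functionality computed in the reflective subcategory $\mathsf{Inj}_S\mathbf{Rel}_n$ with functionality computed in the ambient $\mathbf{Rel}_n$ for which the dictionary was established: the canonical interpretation $\pi_N^{(X,R)}$ has a coproduct as its domain, and coproducts are not preserved by the reflection. The remedy is the observation already used in the proof of Theorem~\ref{ThmB}, namely that the image object of the epi-equalizer factorization is an equalizer of maps between $S$-injective objects and is therefore itself $S$-injective, so that the two factorizations, and hence the two functionality conditions, coincide. Finally one must keep careful track of the arity $n$ of $\mathbf{Rel}_n$ matching the number of rows of $N$, and note that trivial matrices occurring in $U$ cause no difficulty, being automatically detected by the failure of functionality.
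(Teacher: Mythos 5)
Your proposal is correct, and your forward direction is the paper's own argument: both pass from the internal relation to the ordinary relation it induces on the hom-set $\mathbb{C}(X,C)$ (the paper's $R'$, your $\rho(X)$), check that $M$-sharpness transfers along this correspondence, and apply the set-level hypothesis. Your converse, however, takes a genuinely different route through the same infrastructure. The paper takes $T$ to be the class of \emph{all} non-trivial matrices, witnesses ``enough $T$-injectives'' by the inclusions $(X,R)\rightarrowtail (X,X^n)$, and applies Theorem~\ref{ThmB} \emph{twice}: once to exhibit $\mathsf{Inj}_U\mathbf{Rel}_n$ as the \emph{largest} full subcategory of $\mathbf{Rel}_n$ containing all $T$-injectives, closed under finite limits, and with dual having $N$-closed relations for all $N\in U$ (observing that no matrix in $U$ can be trivial, since $\mathbf{Set}^\mathsf{op}\in\mathsf{mclex}S\subseteq\mathsf{mclex}U$), and once to see that $\mathsf{Inj}_S\mathbf{Rel}_n$ is such a subcategory; the maximality clause then yields $\mathsf{Inj}_S\mathbf{Rel}_n\subseteq\mathsf{Inj}_U\mathbf{Rel}_n$, and the sharpness/injectivity dictionary finishes. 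You instead use only the positive half of Theorem~\ref{ThmB}, with $T=S$ and enough injectives witnessed by the reflection, to place $(\mathsf{Inj}_S\mathbf{Rel}_n)^\mathsf{op}$ in $\mathsf{mclex}S\subseteq\mathsf{mclex}U$ --- which is precisely Theorem~\ref{ThmF} --- and then decode $N$-closedness back into $N$-sharpness by running Theorem~\ref{ThmA} \emph{inside} the reflective subcategory. The price of this arrangement is the compatibility issue you flag (the canonical interpretation in $\mathsf{Inj}_S\mathbf{Rel}_n$ has the reflected coproduct as its domain), and your remedy is sound: the image of the epi-equalizer factorization taken in $\mathbf{Rel}_n$ is an equalizer of maps between $S$-injective objects, hence lies in the subcategory; combined with the fact (which you state earlier) that the reflection unit is the identity on underlying sets, hence an epimorphism of $\mathbf{Rel}_n$, the two universal epi-factorizations, and so the two functionality conditions, agree. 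In effect you re-prove inside Theorem~\ref{ThmD} the maximality argument that the paper has packaged once and for all into Theorem~\ref{ThmB}; the paper's arrangement is shorter given Theorem~\ref{ThmB} in full, while yours makes the role of Theorem~\ref{ThmF} explicit and never needs to verify separately that $U$ contains no trivial matrix, since the direction of the dictionary you invoke (functionality in $(X,R)$ implies $N$-sharpness of $R$) holds without any non-triviality assumption. Your closing aside that trivial matrices in $U$ are ``automatically detected by the failure of functionality'' is the one imprecise point, but it is harmless: nothing in your argument depends on it.
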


\begin{proof}
Assume first for any integer $n>0$, every $n$-ary relation $R$ on any set $X$ that is $M$-sharp for every matrix $M$ in $S$ is $N$-closed for every matrix $N$ in $U$ with $n$ rows. Consider a finitely complete category $\mathbb{C}$ that has $M$-closed relations for every $M$ in~$S$. Then every internal relation in $\mathbb{C}$ is $M$-sharp for each $M\in S$. Considering a matrix $N\in U$ with $n$ rows, an $n$-ary internal relation $r\colon R \rightarrowtail C^n$ on an object $C$ in~$\mathbb{C}$ and an arbitrary object~$Y$, we need to prove that $r$ is $N$-closed over~$Y$. These induce an $n$-ary relation $R'$ on the set $\mathbb{C}(Y,C)$ for which $g_1,\dots,g_n\colon Y\to C$ are related if and only if there exists a (necessarily unique) morphism $v\colon Y \to R$ such that $r_iv=g_i$ for each $i\in\{1,\dots,n\}$. It is then easy to see that $R'$ is an (ordinary) $n$-ary relation which is $M$-sharp for each $M\in S$. Our assumption gives then that $R'$ is $N$-closed, which implies that the internal relation $r$ on $C$ is $N$-closed over~$Y$.

Now assume that every finitely complete category that has $M$-closed relations for every $M$ in $S$ also has $N$-closed relations for every $N$ in~$U$. Suppose no matrix in $S$ is trivial. Then no matrix in $U$ can be trivial as well. Indeed, if $U$ contained a trivial matrix, then a non-preorder could not belong to $\mathsf{mclex}U$, but at the same time $\mathbf{Set}^{\mathsf{op}}$ belongs to $\mathsf{mclex}S$ by Theorem~\ref{ThmC}. For each positive~$n$, consider the full subcategory $\mathsf{Inj}_U\mathbf{Rel}_n$ of $\mathbf{Rel}_n$ consisting of $U$-injective objects. We will now apply Theorem~\ref{ThmB} to the category $\mathbb{C}=\mathbf{Rel}_n$. Let $T$ be the set of all non-trivial matrices. For any $n>0$ and any set~$X$, the relation $X^n$ is clearly $M$-sharp for every matrix~$M$. So the object $(X,X^n)$ is a $T$-injective object in $\mathbf{Rel}_n$. For any object $(X,R)$ in $\mathbf{Rel}_n$ there is a monomorphism into such object --- namely, the inclusion $(X,R)\rightarrowtail (X,X^n)$. By Theorem~\ref{ThmB} we then get that $\mathsf{Inj}_U\mathbf{Rel}_n$ is the largest full subcategory of $\mathbf{Rel}_n$ having the following properties: it contains all $T$-injective objects, it is closed under finite limits and its dual category has $N$-closed relations for every $N$ in~$U$. By Theorem~\ref{ThmB} again, the subcategory $\mathsf{Inj}_S\mathbf{Rel}_n$ of $\mathbf{Rel}_n$ consisting of all $S$-injective objects has similar properties: it contains all $T$-injective objects, it is closed under finite limits and its dual category has $M$-closed relations for every $M$ in~$S$. The assumption that every finitely complete category having $M$-closed relations for every $M$ in $S$ has $N$-closed relations for every $N$ in $U$ gives that $\mathsf{Inj}_S\mathbf{Rel}_n$ must be a subcategory of $\mathsf{Inj}_U\mathbf{Rel}_n$ (note that, from the paragraph before the theorem, we know that the duals of these categories are finitely complete). So, by Lemma~\ref{LemB}, for any non-zero natural number~$n$, every $n$-ary relation $R$ on any set $X$ that is $M$-sharp for every matrix $M$ in $S$ is also $N$-sharp for every matrix $N$ in~$U$. In particular, this means that every such relation is $N$-closed for every matrix $N$ in $U$ having $n$ rows.
\end{proof}

Note that, if $S$ contains only the trivial matrix
$$M=\left[\begin{array}{cc|c} 1 & 0 & 0\\ 0 & 1 & 0\end{array}\right]$$
and $U$ contains only the trivial matrix
$$N=\left[\begin{array}{c|c} 1 & 0 \\ 1 & 0\end{array}\right],$$
both $\mathsf{mclex}S$ and $\mathsf{mclex}U$ are the matrix class of all finitely complete preorders. Now, given two subsets $R_1\subseteq X$ and $R_2\subseteq X$ of a set~$X$, the binary relation $R_1\times R_2 \subseteq X^2$ is $M$-sharp; however, this relation is $N$-closed only if $R_1$ and $R_2$ are disjoint or if $R_1=R_2=X$. This shows that the converse implication in Theorem~\ref{ThmD} might not hold if $S$ contains a trivial matrix.

From the proof of Theorem~\ref{ThmD} we can extract the following theorem, which says that in order to prove an implication between (non-empty) matrix properties (in the finitely complete context), one only needs to produce a proof for a single particular category.

\begin{theorem}\label{ThmF}
Let $S$ be a matrix set which contains no empty matrix and let $N\in\M(n,m,k)$, where $n,k>0$ and $m \geqslant 0$. The following statements are equivalent:
\begin{enumerate}[label=(\arabic*)]
\item $\mathsf{mclex}S\subseteq\mathsf{mclex}\{N\}$.
\item The category $(\mathsf{Inj}_S\mathbf{Rel}_n)^\mathsf{op}$, which belongs to $\mathsf{mclex}S$, also belongs to $\mathsf{mclex}\{N\}$.
\end{enumerate}
\end{theorem}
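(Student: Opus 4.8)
The plan is to reduce statement $(1)$, which concerns the whole matrix class $\mathsf{mclex}S$, to the behaviour of the single category $\mathbb{D}^\mathsf{op}$, where I abbreviate $\mathbb{D}=\mathsf{Inj}_S\mathbf{Rel}_n$. The two tools are Theorem~\ref{ThmD}, which translates inclusions of matrix classes into statements about ordinary relations on sets, and the maximality clause of Theorem~\ref{ThmB}, which lets me recognise $\mathbb{D}$ as sitting inside $\mathsf{Inj}_{\{N\}}\mathbf{Rel}_n$ without recomputing functionality inside a subcategory. Since $\mathbb{D}$ is reflective in the cocomplete category $\mathbf{Rel}_n$ it is cocomplete, so $\mathbb{D}^\mathsf{op}$ is left exact, as observed before Theorem~\ref{ThmD}. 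I would first record the fact used in both directions, namely $\mathbb{D}^\mathsf{op}\in\mathsf{mclex}S$. If $S$ contains a trivial (necessarily non-empty) matrix, then by Theorem~\ref{ThmC} that matrix is not functional in any set of more than one element, so every $S$-injective object of $\mathbf{Rel}_n$ has an underlying set of size at most one; thus $\mathbb{D}^\mathsf{op}$ is a left exact preorder and lies in $\mathsf{mclex}\{M\}$ for each non-empty $M\in S$. If $S$ contains no trivial matrix, I instead take $T$ to be the set of all non-trivial matrices, note that $\mathbf{Rel}_n$ has enough $T$-injective objects (each $(X,R)$ embeds in the $T$-injective object $(X,X^n)$) and that $S\subseteq T$, and read off from Theorem~\ref{ThmB} that $\mathbb{D}^\mathsf{op}$ has $M$-closed relations for every $M\in S$. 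In either case $\mathbb{D}^\mathsf{op}\in\mathsf{mclex}S$, and the implication $(1)\Rightarrow(2)$ follows at once, since $\mathbb{D}^\mathsf{op}\in\mathsf{mclex}S\subseteq\mathsf{mclex}\{N\}$.

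For $(2)\Rightarrow(1)$ the heart of the matter is the case where $S$ contains no trivial matrix and $N$ is non-trivial. Keeping $T$ as the non-trivial matrices, I observe that $\mathbb{D}$ contains all $T$-injective objects (because $S\subseteq T$), is closed under finite limits, and---this is exactly hypothesis $(2)$---has a dual with $N$-closed relations. These are precisely the three properties characterising the largest subcategory in Theorem~\ref{ThmB} applied to the matrix set $\{N\}$, which is legitimate since $N$ non-trivial gives $\{N\}\subseteq T$. The maximality clause therefore yields $\mathsf{Inj}_S\mathbf{Rel}_n\subseteq\mathsf{Inj}_{\{N\}}\mathbf{Rel}_n$. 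Now let $R$ be any $n$-ary relation on a set $X$ that is $M$-sharp for every $M\in S$; non-triviality of the matrices in $S$ makes the functionality condition automatic, so $(X,R)$ is an object of $\mathbb{D}$, hence of $\mathsf{Inj}_{\{N\}}\mathbf{Rel}_n$, hence $\{N\}$-injective, so $R$ is $N$-sharp and in particular $N$-closed. This is exactly the relation-theoretic hypothesis of Theorem~\ref{ThmD} for $U=\{N\}$, and that theorem delivers $\mathsf{mclex}S\subseteq\mathsf{mclex}\{N\}$.

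It remains to dispose of the degenerate configurations, in which both statements are forced to agree. If $S$ contains a trivial matrix then, since every $M\in S$ is non-empty, $\mathsf{mclex}S$ is exactly the collection of all left exact preorders; for non-empty $N$ both $(1)$ and $(2)$ then hold, while for empty $N$ both fail, because $\mathbb{D}$ contains both an empty and a one-element object so $\mathbb{D}^\mathsf{op}$ has more than one isomorphism class, whereas $\mathsf{mclex}\{N\}$ consists of preorders with a single isomorphism class by Lemma~\ref{LemA}. Dually, if $N$ is trivial while $S$ has no trivial matrix, then $\mathbf{Set}^\mathsf{op}\in\mathsf{mclex}S$ is not a preorder so $(1)$ fails, and $\mathbb{D}$ contains $(X,X^n)$ for a two-element $X$ so $\mathbb{D}^\mathsf{op}$ is not a preorder and $(2)$ fails as well.

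The step I expect to be the main obstacle is the appeal to the maximality clause of Theorem~\ref{ThmB} in the essential case. The naive attempt is to deduce $(1)$ from $(2)$ by applying Theorem~\ref{ThmA} directly to the object $(X,R)$ of $\mathbb{D}$; but the functionality of $N$ in $(X,R)$ computed inside $\mathbb{D}$ need not coincide with its functionality computed inside $\mathbf{Rel}_n$, since the coproducts $nX^k$ entering the canonical interpretation are formed differently in the two categories. Passing through the maximality characterisation of $\mathsf{Inj}_{\{N\}}\mathbf{Rel}_n$ avoids this mismatch. The only other delicate point is the bookkeeping of the trivial- and empty-matrix boundary cases above, where neither Theorem~\ref{ThmB} nor the converse half of Theorem~\ref{ThmD} applies and one must instead argue that $(1)$ and $(2)$ degenerate together.
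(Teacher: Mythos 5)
Your proof is correct and takes essentially the same route as the paper: Theorem~\ref{ThmF} is exactly what the paper ``extracts'' from the proof of Theorem~\ref{ThmD}, namely applying the maximality clause of Theorem~\ref{ThmB} (with $T$ the non-trivial matrices and matrix set $\{N\}$) to get $\mathsf{Inj}_S\mathbf{Rel}_n\subseteq\mathsf{Inj}_{\{N\}}\mathbf{Rel}_n$, and then feeding the resulting relation-theoretic statement into the forward direction of Theorem~\ref{ThmD}. Your explicit handling of the trivial- and empty-matrix boundary cases corresponds to the paper's parenthetical remark following the theorem statement.
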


(Notice that in the case where $S$ contains a non-empty trivial matrix, both statements are equivalent to $N$ being a non-empty matrix. The result thus also holds in that case.)

We now come to the following question. Given a matrix set $S$ and a matrix $N\in\M(n,m,k)$, how to decide whether every $n$-ary relation $R$ on any set $X$ that is $M$-sharp for every matrix $M$ in $S$ is $N$-closed? Consider the $n$-ary relation on $\{0,\dots,k-1\}$ given by the columns of~$N$. Let us write $\mathsf{col}(N)$ for this relation. Consider the intersection of all $n$-ary relations on $\{0,\dots,k-1\}$ that contain $\mathsf{col}(N)$ as a subrelation and that are $M$-sharp for all $M$ in~$S$. Let us denote this by $\mathsf{col}^S(N)$. It is certainly $M$-sharp for all $M$ in~$S$, and so, if every $n$-ary relation on any set $X$ that is $M$-sharp for every matrix $M$ in $S$ is $N$-closed, then $[0]_n\in \mathsf{col}^S(N)$. The converse is also true: If $[0]_n\in \mathsf{col}^S(N)$, then every $n$-ary relation $R$ on any set $X$ that is $M$-sharp for every matrix $M$ in $S$ is $N$-closed. To see this, consider an interpretation 
$$\left[\begin{array}{ccc|c} x_{11} & \dots & x_{1m} & y_1\\ \vdots & & \vdots & \vdots\\ x_{n1} & \dots & x_{nm} & y_n \end{array}\right]$$
of $N$ of type $X$ such that every left column of the interpretation belongs to~$R$. Let this interpretation be given by a map $f\colon \{0,\dots,k-1\}\to X$. Then the inverse image $f^{-1}(R)$ of $R$ along $f$ will be a relation on $\{0,\dots,k-1\}$ that is $M$-sharp for every $M$ in~$S$. Indeed, every reduction of each $M$ of type $(\{0,\dots,k-1\},\dots,\{0,\dots,k-1\})$ whose (left) columns belong to $f^{-1}(R)$ will have a further reduction by applying $f$ to its entries, whose (left) columns belong to~$R$. Then, since $R$ is $M$-sharp, $f$ of the right column of the reduction will be in~$R$, which means the right column of the reduction will be in $f^{-1}(R)$. Now, since $f^{-1}(R)$ is $M$-sharp for every $M$ in $S$ and since it contains $\mathsf{col}(N)$, it must also contain $\mathsf{col}^S(N)$. The fact that $[0]_n\in \mathsf{col}^S(N)$ will now give that the right column of the above interpretation of $N$ belongs to~$R$. So Theorem~\ref{ThmD} has the following consequence.

\begin{corollary}\label{CorA}  
Consider two matrix sets $S$ and~$U$. If $[0]_n\in\mathsf{col}^S(N)$ for every non-zero natural number $n$ and every matrix $N$ in $U$ with $n$ rows, then every finitely complete category that has $M$-closed relations for every $M$ in $S$ also has $N$-closed relations for every $N$ in~$U$, i.e., $\mathsf{mclex}S\subseteq\mathsf{mclex}U$. The converse is also true when no matrix in $S$ is trivial.
\end{corollary}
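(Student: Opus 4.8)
The plan is to deduce this corollary directly from Theorem~\ref{ThmD}, observing that the condition $[0]_n\in\mathsf{col}^S(N)$ is nothing but a reformulation, for each fixed $n$ and each $N$, of the hypothesis appearing in that theorem. Concretely, for a fixed positive integer $n$ and a fixed matrix $N\in\M(n,m,k)$, I would establish the equivalence
$$[0]_n\in\mathsf{col}^S(N)\quad\Longleftrightarrow\quad\text{every }n\text{-ary relation on any set that is }M\text{-sharp for all }M\in S\text{ is }N\text{-closed},$$
and then simply quantify over all $n$ and all $N\in U$ with $n$ rows, at which point the hypothesis of the corollary becomes verbatim the hypothesis of Theorem~\ref{ThmD}, whose conclusion --- together with its converse under the assumption that no matrix of $S$ is trivial --- is exactly the conclusion sought here.

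For the equivalence itself, the first step is to record that $\mathsf{col}^S(N)$ is well-defined and $M$-sharp for every $M\in S$: it is an intersection of $M$-sharp relations, and $M$-sharpness, being a closure condition on columns, is preserved under arbitrary intersections. For the forward direction, I would note that $\mathsf{col}^S(N)$ is itself an $M$-sharp relation containing $\mathsf{col}(N)$; if every such relation were forced to be $N$-closed, then applying the closure condition for $N$ via the inclusion $\{0,\dots,k-1\}\hookrightarrow\{0,\dots,k-1\}$, whose left columns are precisely the columns of $N$ and hence lie in $\mathsf{col}^S(N)$, places the right column $[0]_n$ inside $\mathsf{col}^S(N)$.

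The substantive direction is the converse. Given an interpretation of $N$ of type $X$, carried by a map $f\colon\{0,\dots,k-1\}\to X$, whose left columns all lie in an $M$-sharp (for all $M\in S$) relation $R$, the key move is to pull $R$ back along $f$: the preimage $f^{-1}(R)$ is again $M$-sharp for every $M\in S$, because any reduction of $M$ with left columns in $f^{-1}(R)$ maps under $f$ to a reduction with left columns in $R$, so $M$-sharpness of $R$ returns its right column to $R$, i.e.\ to $f^{-1}(R)$. Since $f^{-1}(R)$ contains $\mathsf{col}(N)$ and is $M$-sharp for all $M\in S$, it contains the intersection $\mathsf{col}^S(N)$, hence contains $[0]_n$; applying $f$ shows the right column of the interpretation lies in $R$, which is precisely $N$-closedness of $R$.

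I expect no serious obstacle, since the heavy lifting is already packaged into Theorem~\ref{ThmD} and the definition of $\mathsf{col}^S(N)$. The only point requiring genuine care is the stability of $M$-sharpness under taking preimages along $f$ in the converse direction; everything else is bookkeeping and a direct appeal to Theorem~\ref{ThmD}, including the proviso on triviality of $S$ that governs when the converse implication is available.
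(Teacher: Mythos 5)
Your proposal is correct and follows essentially the same route as the paper: the paper proves exactly the equivalence you state (the forward direction via the tautological interpretation of $N$ on $\{0,\dots,k-1\}$, the converse via the preimage $f^{-1}(R)$ being $M$-sharp and hence containing $\mathsf{col}^S(N)$) and then invokes Theorem~\ref{ThmD} in both directions, with the same triviality proviso. The only cosmetic difference is that you make explicit the stability of sharpness under intersections, which the paper leaves implicit.
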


Note that $\mathsf{col}^S(N)$ is necessarily finite (it is a subset of $\{0,\dots,k-1\}^n$, when $N\in\M(n,m,k)$). When $S$ is finite, we can build $\mathsf{col}^S(N)$ in finitely many steps as follows. For a given $R\subseteq \{0,\dots,k-1\}^n$, we consider the relation $S(R)\subseteq \{0,\dots,k-1\}^n$ containing exactly $R$ and the right columns of each row-wise interpretation $B$ of type $(\{0,\dots,{k-1}\},\dots,\{0,\dots,k-1\})$ of each matrix $M'\in\M(n,m',k')$ whose rows are rows of a common matrix $M$ in $S$ such that the (left) columns of $B$ belong to~$R$. The number of all possible such $B$'s is finite (as $S$ is finite), so $S(R)$ can be built from $R$ in finitely many steps. It is easy to see that if $R\subseteq \mathsf{col}^S(N)$ then also $S(R)\subseteq \mathsf{col}^S(N)$. Now, starting with $R=\mathsf{col}(N)$ we can build a chain of proper subset inclusions
$$\mathsf{col}(N)\subset S(\mathsf{col}(N))\subset SS(\mathsf{col}(N))\subset\dots\subset SS\dots S(\mathsf{col}(N))$$
until $S(R)$ for the last set $R$ in the chain is equal to~$R$. When this happens, $R$ will be $M$-sharp for every $M$ in~$S$, and hence $R=\mathsf{col}^S(N)$. The chain does terminate after some finitely many steps because each of the sets in the chain are subsets of the finite set $\{0,\dots,k-1\}^n$. 

From the results above we can readily extract an algorithm for deciding $\mathsf{mclex}S\subseteq \mathsf{mclex}U$, when $S$ and $U$ are finite matrix sets. Note that the results did not give full characterisation of $\mathsf{mclex}S\subseteq \mathsf{mclex}U$ when $S$ contains a trivial matrix. This case can be dealt with separately as follows:
\begin{itemize}
\item If $S$ contains a trivial matrix, then every category in $\mathsf{mclex}S$ is a preorder. Therefore, one has $\mathsf{mclex}S\nsubseteq \mathsf{mclex}U$ if and only if $U$ contains an empty matrix and $S$ does not contain an empty matrix.
\end{itemize}
The combined algorithm which deals with both the trivial and non-trivial matrices is then the following. To decide whether $\mathsf{mclex}S\subseteq \mathsf{mclex}U$ where $S$ and $U$ are finite matrix sets, do the following.
\begin{description}
	\item[Step 0 (dealing with empty matrices).] If $S$ contains an empty matrix, then terminate the process with positive decision for $\mathsf{mclex}S\subseteq \mathsf{mclex}U$. Else, if $U$ contains an empty matrix then terminate the process with negative decision for $\mathsf{mclex}S\subseteq \mathsf{mclex}U$.

	\item[Step 1 (dealing with trivial matrices).] If $S$ contains a matrix $M$ which contains a row with no $0$'s then terminate the process with positive decision for $\mathsf{mclex}S\subseteq \mathsf{mclex}U$. Else, for each matrix $M$ in~$S$, for each pair of distinct rows in~$M$ and for a (randomly) chosen entry $0$ in the first row, do the following. If the second row does not contain a $0$ entry that admits a linkage with the chosen $0$ entry in the first row, then terminate the process with positive decision for $\mathsf{mclex}S\subseteq \mathsf{mclex}U$.

    \item[Step 2 (dealing with non-trivial matrices).] For each matrix $N\in\M(n,m,k)$ in $U$ do the following. Keep expanding~$N$, until it is impossible to expand it further, with right columns of those row-wise interpretations $B$ of type $(\{0,\dots,{k-1}\},\dots,$ $\{0,\dots,{k-1}\})$ of each matrix $M'\in\M(n,m',k')$ whose rows are rows of a common matrix $M$ in $S$ such that the left columns of~$B$, but not its right column, can be found in~$N$. If the expanded $N$ does not contain the column of~$0$'s, then terminate the process with negative decision for $\mathsf{mclex}S\subseteq \mathsf{mclex}U$.

    \item[Step 3 (conclusion).] Reaching this step means that the process has not been terminated in the previous steps. Then the process completes with positive decision for $\mathsf{mclex}S\subseteq \mathsf{mclex}U$.
\end{description}

An obvious question that arises here is whether a conjunction of matrix properties is a matrix property. It turns out that if $S=\{M_1,\dots,M_s\}$ is a finite matrix set, then it is possible to create a matrix $M_1\times\cdots\times M_s$ such that $\mathsf{mclex}S=\mathsf{mclex}\{M_1\times\cdots\times M_s\}$. If $S=\varnothing$, we set the empty product of matrices to be the matrix $[0]\in\M(1,1,1)$. If $S\neq \varnothing$ and if $M_i\in\M(n_i,m_i,k_i)$ for each $i\in\{1,\dots,s\}$, we define a product $M_1\times\cdots\times M_s$ of matrices to be a matrix in
$$\M(n_1+\dotsc+n_s,m_1\cdot \dotsc\cdot m_s,\mathsf{max}\{k_1,\dots,k_s\})$$
whose columns are indexed by the set of all possible choices of a column in each of $M_1,\dots,M_s$ and are obtained by stacking each of these columns on top of each other in the same order as the matrices in the list. Thus,  
$$\mathsf{col}(M_1\times\cdots\times M_s)\cong\mathsf{col}(M_1)\times \cdots \times \mathsf{col}(M_s).$$
It is easy to see that if $m_1\cdot \dotsc\cdot m_s>0$, then each $M_i\in S$ will be a reduction of such~$M_1\times\cdots\times M_s$ --- just keep those rows of $M_1\times\cdots\times M_s$ which correspond to the position of columns from $M_i$ in the stack and then delete the duplicate columns if necessary. So $\mathsf{mclex}{\{M_1\times\cdots\times M_s\}}\subseteq \mathsf{mclex}S$ by Lemma~\ref{Lemma reductions}. In the case where $m_1\cdot \dotsc\cdot m_s=0$ or where $S=\varnothing$, this inclusion holds trivially. To get the converse inclusion, it is sufficient to argue in the case when $s=2$, since the cases $s=0$ and $s=1$ are obvious and when $s\geqslant 3$, we have
$$M_1\times\cdots\times M_s=(\dots((M_1\times M_2)\times M_3)\dots)\times M_s.$$ 
So we assume $S=\{M_1,M_2\}$. For any fixed column $C$ of~$M_2$, the matrix $M_1\times\{C\}$ is a reduction of~$M_1$. The right column of this reduction is given by $\{[0]_{n_1}\}\times \{C\}$. The matrix formed by these right columns (one for each column $C$ of~$M_2$) is a reduction of~$M_2$, with the right column being $[0]_{n_1+n_2}$ and so $$\mathsf{mclex}\{M_1,M_2\}\subseteq \mathsf{mclex}\{M_1\times M_2\},$$
by Corollary~\ref{CorA}. We have thus proved the following:

\begin{theorem}\label{ThmE}
For a finite matrix set~$S$, we have:
$$\mathsf{mclex}S=\bigcap_{M\in S}\mathsf{mclex}\{M\}=\mathsf{mclex}\left\{\prod_{M\in S} M\right\}.$$ 
\end{theorem}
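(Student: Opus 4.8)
The plan is to treat the two equalities separately, leaning on the reduction calculus together with Theorem~\ref{ThmD} and Corollary~\ref{CorA}. The first equality $\mathsf{mclex}S=\bigcap_{M\in S}\mathsf{mclex}\{M\}$ I regard as definitional: a left exact category lies in $\mathsf{mclex}S$ exactly when it has $M$-closed relations for every $M\in S$, and this is precisely the condition of lying in $\mathsf{mclex}\{M\}$ for each individual $M\in S$, i.e.\ in the intersection. The convention $\prod_{M\in\varnothing}M=[0]\in\M(1,1,1)$ keeps the empty case $S=\varnothing$ consistent, both sides then being the collection of all left exact categories.

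Writing $P=\prod_{M\in S}M$, I would prove the second equality by two inclusions. For $\mathsf{mclex}\{P\}\subseteq\mathsf{mclex}S$ I first dispatch the degenerate cases ($S=\varnothing$, or some factor forcing $P$ to have zero columns), where the inclusion is immediate. Otherwise each factor $M_i$ is a reduction of $P$ — retain the block of rows of $P$ coming from $M_i$ and delete duplicate columns — so every $P$-sharp relation is $M_i$-closed; Theorem~\ref{ThmD} then yields $\mathsf{mclex}\{P\}\subseteq\mathsf{mclex}\{M_i\}$, and intersecting over $i$ together with the first equality gives $\mathsf{mclex}\{P\}\subseteq\mathsf{mclex}S$.

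For the reverse inclusion $\mathsf{mclex}S\subseteq\mathsf{mclex}\{P\}$ I would induct on the size $s$ of $S$, the cases $s=0,1$ being trivial; using the associativity $M_1\times\cdots\times M_s=(\cdots(M_1\times M_2)\cdots)\times M_s$ and the first equality, the inductive step reduces to $S=\{M_1,M_2\}$ with $M_i\in\M(n_i,m_i,k_i)$. Here I invoke Corollary~\ref{CorA} with $U=\{M_1\times M_2\}$: since $\mathsf{col}(M_1\times M_2)\cong\mathsf{col}(M_1)\times\mathsf{col}(M_2)$, it suffices to show $[0]_{n_1+n_2}\in\mathsf{col}^S(M_1\times M_2)$. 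I would do this in two stages. First, fixing a column $C$ of $M_2$, the row-wise interpretation of (a row-duplicated copy of) $M_1$ that interprets the top $n_1$ rows by the identity and the bottom $n_2$ rows by the constant functions equal to the entries of $C$ is a reduction of $M_1$ whose left columns $\binom{u}{C}$ ($u$ a column of $M_1$) all lie in $\mathsf{col}(M_1\times M_2)$ and whose extended column is $[0]_{n_1}$ over $C$; by $M_1$-sharpness this places $\binom{[0]_{n_1}}{C}$ in $\mathsf{col}^S(M_1\times M_2)$. Second, letting $C$ range over the columns of $M_2$ and now interpreting the top $n_1$ rows by the constant $0$ and the bottom $n_2$ rows by the identity, the assembled matrix is a reduction of $M_2$ whose left columns are exactly the tuples $\binom{[0]_{n_1}}{C}$ already obtained and whose extended column is $[0]_{n_1+n_2}$; by $M_2$-sharpness this yields the desired membership.

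The easy parts are the first equality and the inclusion $\mathsf{mclex}\{P\}\subseteq\mathsf{mclex}S$, which are pure reduction bookkeeping. The genuine content — and the step I expect to need most care — is the two-stage generation of the all-zero tuple in the $s=2$ case: the crucial device is that reductions are \emph{row-wise} interpretations, so one is free to interpret different rows by different functions and, in particular, to collapse an entire block of rows to a constant, zeroing out the $M_1$-part and then the $M_2$-part in turn. Keeping the degenerate cases (empty $S$, zero-width factors, trivial matrices) quarantined from the application of Corollary~\ref{CorA} is the only other point requiring attention.
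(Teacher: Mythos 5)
Your proposal is correct and takes essentially the same route as the paper: one inclusion by observing that each $M_i$ is a reduction of the product matrix and invoking Theorem~\ref{ThmD}, and the converse by reducing to the case $s=2$ via associativity and then applying Corollary~\ref{CorA} through exactly your two-stage argument (the reduction $M_1\times\{C\}$ of $M_1$ produces the columns $\{[0]_{n_1}\}\times\{C\}$, which then assemble into a reduction of $M_2$ with right column $[0]_{n_1+n_2}$). The only difference is that you spell out the degenerate cases and the row-wise interpretations in more detail than the paper does, which is harmless.
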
 

This theorem together with Theorem~\ref{ThmC} gives:

\begin{corollary}
For a finite matrix set~$S$, the following conditions are equivalent:
\begin{enumerate}[label=(\arabic*)]
\item\label{mclexS trivial} $\mathsf{mclex}S$ only contains preorders (i.e., $\prod_{M\in S} M$ is a trivial matrix).
\item\label{S contains a trivial matrix} $S$ contains a trivial matrix.
\end{enumerate}
\end{corollary}

\begin{proof}
The implication \ref{S contains a trivial matrix}$\Rightarrow$\ref{mclexS trivial} being trivial, let us show \ref{mclexS trivial}$\Rightarrow$\ref{S contains a trivial matrix}. Assuming~\ref{mclexS trivial}, we know that $S$ cannot be empty. If $S$ contains only one matrix, the result is obvious. Since the intersection of finitely many matrix classes is again a matrix class (Theorem~\ref{ThmE}), by induction it suffices to treat the case where $S=\{M_1,M_2\}$ contains exactly two matrices. We thus suppose that $M_1\times M_2$ is a trivial matrix and we shall prove that either $M_1$ or $M_2$ is trivial. Without loss of generality, we can assume that both $M_1$ and $M_2$ are non-empty (and thus so is $M_1\times M_2$). If a row of $M_1\times M_2$ does not contain $0$ as an entry, so does the corresponding row in $M_1$ or $M_2$ and thus at least one of $M_1$ and $M_2$ is trivial by Theorem~\ref{ThmC}. Again by Theorem~\ref{ThmC}, the remaining possibility is that $M_1\times M_2$ admits two distinct rows containing $0$ as an entry but for which the two corresponding entries cannot be linked by a linkage. It is not possible that one of these rows comes from $M_1$ and the other one from $M_2$ since in that case, by construction of $M_1\times M_2$, there would exist a column of $M_1\times M_2$ with $0$ in each of these rows. So these two rows must come from the same matrix, which is then forced to be a trivial matrix.
\end{proof}

\section{Computer-aided classification results}\label{sec4}

We write $\mathsf{Mclex}$ for the collection of all matrix classes (of finitely complete categories). It is a meet semi-lattice by Theorem~\ref{ThmE}. For each $n>0$, let $D_n\in\M(n,n,2)$ denote the matrix, all of whose entries are $0$ except those on the main diagonal, which are~$1$. Note that $D_1$ and $D_2$ are the matrices from Theorem~\ref{ThmC} characterising the trivial matrices, so
$$\mathsf{mclex}\{D_1\}=\mathsf{mclex}\{D_2\}$$
is the matrix class of finitely complete preorders. $D_3$ is the matrix that defines majority categories (Example~\ref{ExaB}), and so we have a strict inclusion $\mathsf{mclex}\{D_2\}\subsetneqq \mathsf{mclex}\{D_3\}$. Using the algorithm we can see that this extends to an infinite sequence of strict inclusions
$$\mathsf{mclex}\{D_2\}\subsetneqq \mathsf{mclex}\{D_3\}\subsetneqq \mathsf{mclex}\{D_4\}\subsetneqq\cdots.$$  
Hence $\mathsf{Mclex}$ is infinite. It is not difficult to see that $\mathsf{Mclex}$ is countable. Computer implementation of our algorithm allows one to get a complete description of the posets of matrix classes given by matrices with some restricted dimensions (dependent on the computational efficiency of the implementation and the computer). The computation of the poset of all matrix classes remains an open problem. The fragment of $\mathsf{Mclex}$ consisting of matrix classes given by all matrices in $\M(n,m,k)$ will be denoted by $\mathsf{Mclex}[n,m,k]$. Each $\mathsf{Mclex}[n,m,k]$ is obviously finite (it contains at most $k^{n\cdot m}$ different matrix classes). Our algorithm allows one to compute each poset $\mathsf{Mclex}[n,m,k]$ in a finite time that depends (exponentially) on the parameters $n,m,k$.

What can be established without the help of a computer, using just Lemma~\ref{LemA}, Theorem~\ref{ThmC}, Corollary~\ref{CorB} and Theorem~\ref{theorem anti-trivial}, is the following (where $n,m,k$ are integers such that $n,k>0$ and $m\geqslant 0$):
\begin{itemize}
\item $\mathsf{Mclex}[n,0,k]$ has exactly one matrix class consisting of the preorders with a single isomorphism class of objects (i.e., preorders equivalent to the terminal category).

\item When $m\geqslant 1$, we have that $\mathsf{Mclex}[n,m,1]$ has exactly one matrix class given by all finitely complete categories.

\item When $m\geqslant 1$ and $k\geqslant 2$, we have $\mathsf{Mclex}[1,m,k]=\mathsf{Mclex}[n,1,k]=\mathsf{Mclex}[n,2,k]$ and these have exactly two elements: the matrix class of finitely complete preorders and the matrix class of all finitely complete categories.

\item When $m\geqslant 3$ and $k\geqslant 2$, the set $\mathsf{Mclex}[2,m,k]$ consists of exactly three elements: the matrix class of finitely complete preorders, the matrix class of all finitely complete categories and the matrix class of Mal'tsev categories.
\end{itemize}
In view of this, we will call a matrix class \emph{degenerate} if it is determined by a trivial matrix or an anti-trivial matrix. That is, the degenerate matrix classes are the following ones:
\begin{itemize}
\item the matrix class consisting of the preorders with a single isomorphism class of objects (i.e., the bottom element of the poset $\mathsf{Mclex}$);
\item the matrix class of finitely complete preorders (i.e., the unique atom of the poset $\mathsf{Mclex}$);
\item the matrix class of all finitely complete categories (i.e., the top element of the poset $\mathsf{Mclex}$).
\end{itemize}

Let us notice that a matrix with entries in the set $\{0,\dots,k-1\}$ and having $n$ rows can have at most $k^n$ different columns. One of these $k^n$ columns is a column of zeros, whose presence makes the matrix anti-trivial. So all non-degenerate members of $\mathsf{Mclex}[n,m,k]$ lie in $\mathsf{Mclex}[n,k^n-1,k]$.

The computer program that we wrote at the time of preparing this paper is able to compute the posets $\mathsf{Mclex}[n,m,2]$, where $n\leqslant 4$, in a short amount of time. For two given matrices $M_1\in\M(n_1,m_1,k_1)$ and $M_2\in\M(n_2,m_2,k_2)$, establishing whether $\mathsf{mclex}\{M_1\}\subseteq\mathsf{mclex}\{M_2\}$ using our algorithm, is already quite laborious. In particular, in Step~2 of the algorithm we are required to do a column-comparison of $M_2$ with each row-wise interpretation $B$ of type $(\{0,\dots,k_2-1\},\dots,\{0,\dots,k_2-1\})$ of each matrix $M'_1\in\M(n_2,m_1,k_1)$ whose rows are rows of~$M_1$; and if the outcome is positive (i.e., if every column of $B$ is a column of~$M_2$) and if the right column of $B$ is not already in~$M_2$, then we need to do the same, again for all~$B$, after $M_2$ has been expanded with the right column of~$B$. In general, the number of these $M'_1$'s is $n_1^{n_2}$ and the number of such row-wise interpretations is $k_2^{k_1n_2}$ for each~$M'_1$. The maximum number of times that $M_2$ may get expanded is given by $k_2^{n_2}-1-m_2$, where $k_2^{n_2}-1$ is the total number of different non-zero columns for a matrix in $\M(n_2,m_2,k_2)$. So altogether, the process may require up to $n_1^{n_2}k_2^{k_1n_2}(k_2^{n_2}-1-m_2)$ many column-comparisons of two matrices. Therefore, when computing the poset $\mathsf{Mclex}[n,m,k]$, we want to decrease the number of times we would have to decide some inclusion $\mathsf{mclex}\{M_1\}\subseteq\mathsf{mclex}\{M_2\}$. This can be done for instance by removing duplicate rows or columns from matrices in $\M(n,m,k)$ and thus consider the resulting matrices only once. Since, just like duplication of rows and columns, also the order in which rows and columns are arranged does not alter the corresponding matrix class, we may only consider matrices for which both the rows and the columns are ordered increasingly with respect to the usual lexicographical order. Adapting the terminology from~\cite{Lubiw1987}, we say that such matrices are \emph{doubly lexi-ordered}. Other filters may also be applied. For instance, in the case when $k>2$, we may additionally filter out those matrices where, in some row and for some $0<i<k-1$, no entry $i$ is found before the first occurrence of~$i+1$.

Before displaying the results of our computer-aided computation of $\mathsf{Mclex}[n,m,k]$ for various $n,m,k$, we describe how we chose to represent the matrix classes in the display. For each matrix class $\mathcal{C}$ obtained from a non-empty matrix, and for each integers $n,m,k>0$, consider the set $\mathcal{C}_{n,m,k}$ of all matrices $M$ such that $\mathcal{C}=\mathsf{mclex}\{M\}$ and $M\in \M(n',m',k')$ for some $n'\leqslant n$, $m'\leqslant m$ and $k'\leqslant k$. Now consider the subset $\mathcal{C}^{\mathsf{R}}_{n,m,k}$ of $\mathcal{C}_{n,m,k}$ consisting of those matrices that have minimal number of rows; the subset $\mathcal{C}^{\mathsf{RC}}_{n,m,k}$ of $\mathcal{C}^{\mathsf{R}}_{n,m,k}$ consisting of those matrices that have minimal number of columns and the subset $\mathcal{C}^{\mathsf{RCG}}_{n,m,k}$ of $\mathcal{C}^{\mathsf{RC}}_{n,m,k}$ consisting of those matrices that have minimal greatest entry. Viewing elements of $\mathcal{C}^{\mathsf{RCG}}_{n,m,k}$ as sequences of elements of $\{0,\dots,k-1\}$ by juxtaposing the transpose of each column next to each other, a matrix will be called an \emph{$(n,m,k)$-canonical} matrix if it is the smallest element of $\mathcal{C}^{\mathsf{RCG}}_{n,m,k}$ by the lexicographical ordering. It is easy to see that any non-empty matrix $M\in\M(n',m',k')$ which is $(n,m,k)$-canonical for some $n\geqslant n'$, $m \geqslant m'$ and $k \geqslant k'$ is also $(n',m',k')$-canonical. However, as we will show later on in this section, there is a matrix in $\M(4,5,2)$ which is $(4,5,2)$-canonical but not $(4,5,3)$-canonical.

\begin{lemma}
Let $n,m,k\geqslant 1$ be integers and $M$ an $(n,m,k)$-canonical matrix. The following properties hold for~$M$:
\begin{itemize}
\item $M$ has no duplicate rows or columns.
\item $M$ is doubly lexi-ordered.
\item For each row of $M$ and for each $0<i<j<k$, if $j$ appears in the row, then so does $i$ and the first occurrence of $j$ appears after the first occurrence of~$i$.
\end{itemize}
\end{lemma}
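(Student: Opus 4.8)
The plan is to treat the three properties separately, using throughout that every matrix in $\mathcal{C}^{\mathsf{RCG}}_{n,m,k}$ presents the same class $\mathcal{C}$ and that $M$ is, by construction, the lexicographically least such matrix when each matrix is read column by column, top to bottom. The three structural moves I will exploit---deletion of duplicate rows or columns, permutation of rows or columns, and relabelling of nonzero entries---all preserve membership in $\mathcal{C}^{\mathsf{RCG}}_{n,m,k}$, so exhibiting a strictly smaller matrix obtained by one of them contradicts the minimality of $M$. For the first property, if $M$ had two equal rows then deleting one would give, by invariance of matrix properties of left exact categories under duplication of rows, a matrix of the same class with strictly fewer rows and the same greatest entry, contradicting $M\in\mathcal{C}^{\mathsf{R}}_{n,m,k}$; with duplicate rows excluded, the identical argument for columns (invariance under duplication of columns) contradicts $M\in\mathcal{C}^{\mathsf{RC}}_{n,m,k}$.

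For double lexi-ordering I would note first that every row/column permutation of $M$ again lies in $\mathcal{C}^{\mathsf{RCG}}_{n,m,k}$, since permutations preserve the class, the dimensions and the multiset of entries (hence the greatest entry); thus $M$ is lexicographically least among its own permutations. An adjacent-transposition argument then finishes the job. If two adjacent columns were out of order, swapping them would leave the earlier column blocks untouched and replace the first affected block by a lexicographically smaller column, decreasing the sequence. If two adjacent rows $r_\rho>r_{\rho+1}$ were out of order, let $j^\ast$ be the first column in which they differ; swapping the rows leaves every column $<j^\ast$ fixed (the rows agree there) and, inside column $j^\ast$, first lowers the entry in position $\rho$, again decreasing the sequence. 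Hence both the rows and the columns are sorted.

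The third property is the delicate one, and the crux is to license \emph{per-row} relabelling of nonzero entries. Since $\mathsf{mclex}\{M\}$ consists of left exact categories, having $M$-closed relations is equivalent to every internal relation being strictly $M$-closed (Theorem~2.4 of~\cite{ZJanelidze2006a}), and strict closedness is tested against row-wise interpretations, whose defining functions $f_1,\dots,f_n$ may be chosen independently in each row. Consequently, if $M'$ is obtained from $M$ by applying a bijection $\tau_\rho$ of $\{1,\dots,k-1\}$ (fixing $0$) to a single row $\rho$, then replacing $f_\rho$ by $f_\rho\circ\tau_\rho$ sets up a bijection between the row-wise interpretations of $M'$ and those of $M$ which preserves both the underlying interpretation matrix and the all-$0$ extended column; strict $M'$-closedness and strict $M$-closedness therefore agree on every relation, so $\mathsf{mclex}\{M'\}=\mathsf{mclex}\{M\}$. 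This is the step I expect to require the most care, precisely because the single-object notion of $M$-closedness uses one common $f$, and it is only the passage to strict closedness in the left exact context that unlocks independent relabelling per row.

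With per-row relabelling in hand, I would fix a row $\rho$, list its distinct nonzero values $v_1,\dots,v_s$ in order of first occurrence, and assume property~$3$ fails; choosing $t$ least with $v_t\neq t$ gives $v_{t'}=t'$ for all $t'<t$. Relabelling row $\rho$ by the bijection sending each $v_{t'}$ to $t'$ (extended arbitrarily on the remaining labels) yields a matrix in $\mathcal{C}^{\mathsf{RCG}}_{n,m,k}$: the class and dimensions are unchanged, and row $\rho$ now uses exactly $\{0,1,\dots,s\}$, so the greatest entry cannot rise and stays at its minimal value. Every column to the left of the first occurrence of $v_t$ carries, in row $\rho$, only values from $\{0,1,\dots,t-1\}$, all fixed by the relabelling, so the column-by-column sequence is unchanged up to that point; at the first occurrence of $v_t$ the relevant entry drops from $v_t\geq t+1$ to $t$, making the new sequence strictly smaller. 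This contradicts the minimality of $M$, forcing $v_t=t$ for every $t$, which is exactly the assertion of the third property.
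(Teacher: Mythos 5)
Your proof is correct and follows essentially the same route as the paper's: deletion of duplicate rows/columns against row- and column-minimality, adjacent transpositions against lexicographic minimality for the doubly lexi-ordered property, and per-row relabelling of nonzero entries against lexicographic minimality for the third property. The only difference is one of detail: where the paper merely asserts that interchanging two positive values within a single row preserves the matrix class, you justify it via row-wise interpretations and strict closedness in the left exact context --- a worthwhile elaboration of the same argument, not a different approach.
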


\begin{proof}
The fact that $M$ does not have duplicate rows or columns follows immediately from the fact that removing a duplicate row or a duplicate column in a matrix does not alter the corresponding matrix class.

To prove that $M$ is doubly lexi-ordered, we use the fact that rearranging rows of a matrix, or rearranging its columns, does not alter the corresponding matrix class. The fact that the columns of $M$ are lexicographically ordered is easy to see. Indeed, if it is not so, at the first position where this breaks, swap the two consecutive columns. Then, the new sequence of juxtaposed transposed columns will be lexicographically less than the previous one, violating our assumption that $M$ is $(n,m,k)$-canonical.

Suppose now that the rows of $M$ are not lexicographically ordered, and let $i$ be the position of the first row whose consecutive row is lexicographically less than it. Let $j$ be the first position of an entry $a_{i,j}$ in the $i$-th row which is bigger than the corresponding entry $a_{i+1,j}$ in the $(i+1)$-th row. Swap the $i$-th and the $(i+1)$-th rows. Comparing juxtaposed arrangement of transposed columns for the old and the new matrices, we note that the first position where the two sequences will be different from each other is where the first sequence has entry $a_{i,j}$ and the second sequence has entry $a_{i+1,j}$. The second sequence is then lexicographically smaller than the first, which contradicts the assumption that $M$ is $(n,m,k)$-canonical.

The last statement follows from the fact that, given positive integers $i$ and~$j$, if one replaces in a row of a matrix, each entry $i$ by $j$ and vice-versa, the corresponding matrix class remains the same.
\end{proof}

We will now display some of the posets $\mathsf{Mclex}[n,m,k]$, where $n\in\{3,4\}$ and $k=2$, computed by a computer-implementation of the algorithm from the previous section. In these displays, we will represent each element of $\mathsf{Mclex}[n,m,k]$ by an image which represents the $(n,m,k)$-canonical matrix for that element of $\mathsf{Mclex}[n,m,k]$. Each white square of that image represents an entry~$0$ in the $(n,m,k)$-canonical matrix and each grey square represents an entry~$1$. In each display, an arrow connecting a matrix $M_1$ with a matrix $M_2$ marks inclusion of matrix classes $\mathsf{mclex}\{M_1\}\subseteq\mathsf{mclex}\{M_2\}$. We do not draw an arrow if it can be obtained by a path of arrows (we thus display the reflexive and transitive reduction of the poset). We also omit the degenerate matrix classes in these displays.

We begin with Figure~\ref{fig:3xmx2}, which displays the poset of non-degenerate members of $\mathsf{Mclex}[3,7,2]$. Since $2^3-1=7$, we know that Figure~\ref{fig:3xmx2} displays the poset of all non-degenerate matrix classes defined by a matrix having three rows, an arbitrary number of columns and whose entries lie in the set $\{0,1\}$. This figure also illustrates Corollary~\ref{CorB}: there are no matrices in the figure with a single row, and there is only one that has exactly $2$ rows --- namely, the matrix
$$M=\left[\begin{array}{ccc} 0 & 1 & 1\\ 1 & 0 & 1 \end{array}\right]$$ which, up to permutation of columns, is the Mal'tsev matrix (Example~\ref{ExaA}).

\begin{figure}
    \centering
    \includegraphics[width=130pt]{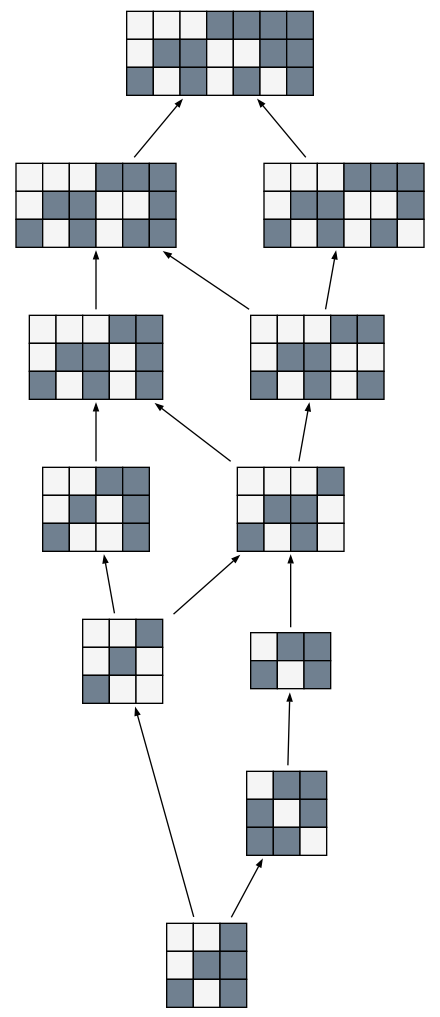}
    \caption{Hasse diagram of the poset of non-degenerate matrix classes in $\mathsf{Mclex}[3,7,2]$.}
    \label{fig:3xmx2}
\end{figure}

The matrix class on the left of the matrix class of Mal'tsev categories in Figure~\ref{fig:3xmx2} is the matrix class of majority categories (Example~\ref{ExaB}). The bottom matrix class is the finitely complete extension of the collection of arithmetical categories (Example~\ref{ExaC}) --- this is in fact the intersection of the matrix classes of Mal'tsev and majority categories. The one between arithmetical and Mal'tsev matrix classes is the one whose algebraic members (i.e., categories of algebras in a variety) are given by varieties having a so-called `minority term'.

What we cannot find in Figure~\ref{fig:3xmx2} is the matrix class arising from the syntactical refinement of the Mal'tsev condition characterising Mal'tsev varieties with directly decomposable congruences, from Remark~\ref{RemB}:
$$M=\left[\begin{array}{ccccc} 0 & 0 & 0 & 1 & 1 \\
1 & 0 & 1 & 0 & 1 \\
1 & 0 & 0 & 1 & 1 \\
1 & 1 & 0 & 1 & 0
\end{array}\right].$$
It turns out that the matrix class for the $M$ above shows itself only in Figure~\ref{fig:4x4x2}. The matrix class in Figure~\ref{fig:4x4x2} determined by the matrix $M$ above is the middle one in the fourth row (from the top). The matrix displaying that matrix class in the figure is given by
$$N=\left[\begin{array}{cccc} 
0 & 0 & 0 & 1 \\
0 & 0 & 1 & 1 \\
0 & 1 & 1 & 0 \\
1 & 0 & 1 & 1 \end{array}\right].$$
It is not difficult to see that $M$ has the following doubly lexi-ordering (shift column $1$ right to the position of column $3$ and interchange rows $2$ and~$3$):
$$M'=\left[\begin{array}{ccccc}
0 & 0 & 0 & 1 & 1 \\
0 & 0 & 1 & 1 & 1 \\
0 & 1 & 1 & 0 & 1 \\
1 & 0 & 1 & 1 & 0
\end{array}\right].$$
We can then see that $N$ is just $M'$ with the last column removed. So $\mathsf{mclex}\{N\}\subseteq \mathsf{mclex}\{M\}$. With the help of the computer we could find the following proof for the converse inclusion. The display below summarises the proof. It consists of three blocks of extended matrices (each having four rows). The first block has its left columns given by the matrix $M'$ and its right columns given by the matrix $N$. In every next block, each row is a row-wise interpretation of type $(\{0,1\},\dots,\{0,1\})$ of a row of $M'$ and every left column is a column appearing as a right column in one of the previous blocks. Reaching a right column of $0$'s confirms $\mathsf{mclex}\{M\}\subseteq\mathsf{mclex}\{N\}$. 
$$\begin{array}{ccccc|cccc} 
0 & 0 & 0 & 1 & 1 & 0 & 0 & 0 & 1\\
0 & 0 & 1 & 1 & 1 & 0 & 0 & 1 & 1\\
0 & 1 & 1 & 0 & 1 & 0 & 1 & 1 & 0\\
1 & 0 & 1 & 1 & 0 & 1 & 0 & 1 & 1\\
\hline
1 & 0 & 0 & 1 & 0 & 1 \\
1 & 0 & 0 & 1 & 0 & 1 \\
0 & 1 & 0 & 0 & 1 & 1 \\
1 & 0 & 1 & 1 & 0 & 0 \\
\hline
0 & 0 & 0 & 1 & 1 & 0 \\
0 & 0 & 1 & 1 & 1 & 0 \\
0 & 1 & 1 & 0 & 1 & 0 \\
1 & 0 & 1 & 1 & 0 & 0 \\
\end{array}$$
We will call such proof displays `lex-tableaux' (`lex' is a standard abbreviation of `left exact'). A lex-tableau is of course nothing other than a visual implementation of our algorithm.

\begin{figure}
    \centering
    \includegraphics[width=420pt]{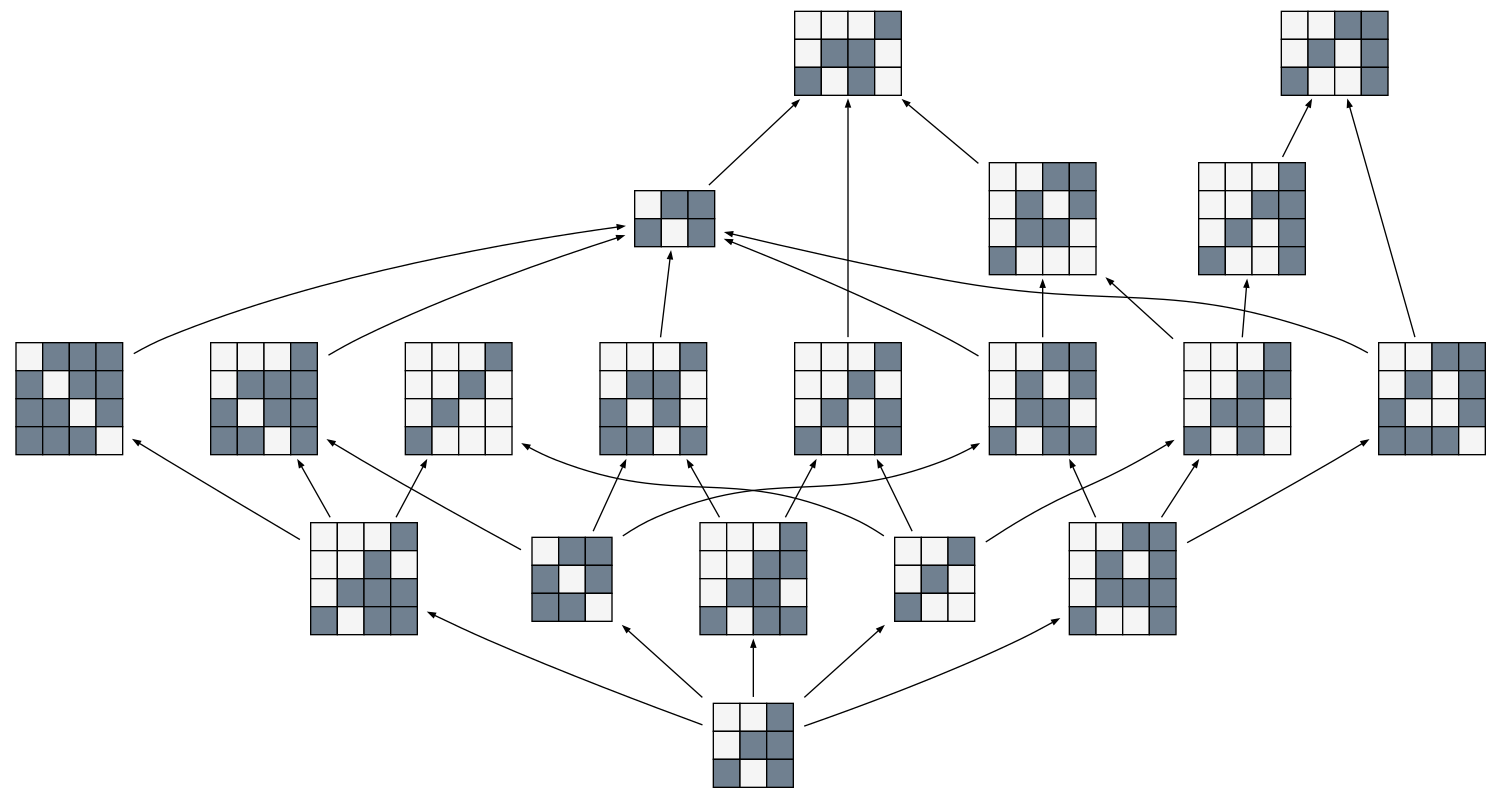}
    \caption{Hasse diagram of the poset of non-degenerate  matrix classes in $\mathsf{Mclex}[4,4,2]$.}
    \label{fig:4x4x2}
\end{figure}

What appears to be distinctive to the middle matrix in the fourth row of Figure~\ref{fig:4x4x2}, compared to the other matrices in that row, is that each of them has three arrows going out, while the former has two. The `missing arrow' gets added in Figure~\ref{fig:4x5x2} from the Introduction, which displays the poset of non-degenerate matrix classes in $\mathsf{Mclex}[4,5,2]$.

As we increase the parameters of $\mathsf{Mclex}[n,m,k]$, the posets get more and more complex. We have included here the one for $\mathsf{Mclex}[4,6,2]$ --- see Figure~\ref{fig:4x6x2}, which is the largest one with $n=4$ and $k=2$ that can fit with reasonable readability on a page. Figure~\ref{fig:sizes of 4xmx2} shows the growth of the sizes of posets $\mathsf{Mclex}[4,m,2]$ (including the degenerate matrix classes).

\begin{figure}
    \centering
	\includegraphics[width=455pt]{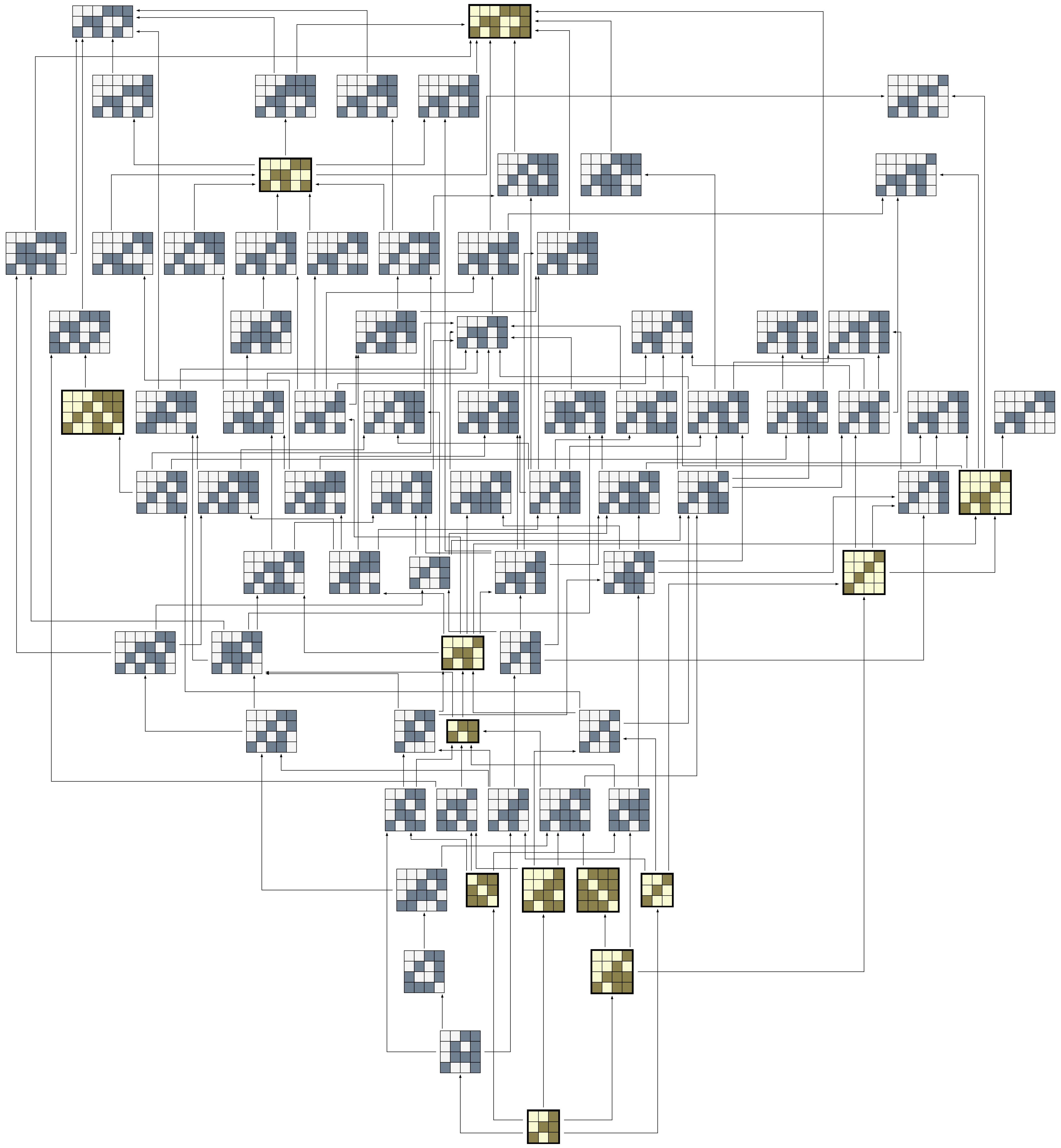}
    \caption{Hasse diagram of the poset of non-degenerate matrix classes in $\mathsf{Mclex}[4,6,2]$. The highlighted matrices with thicker frames are some of those that link with known Mal'tsev conditions. In vertical order from bottom to top and horizontally from right to left, these matrices arise from Mal'tsev conditions described by the following keywords:
    (1)~Arithmetical ($=$~Mal'tsev with majority);
    (2)~Mal'tsev with $4$-ary near unanimity;
    (3)~Majority;
    (4)~$4$-minority;
    (5)~Refinement of Mal'tsev with directly decomposable congruences;
    (6)~Minority;
    (7)~Mal'tsev;
    (8)~$3$-edge;
    (9)~$4$-ary near unanimity;
    (10)~$4$-edge;
    (11)~Refinement of directly decomposable congruence classes ($=$~refinement of local anti-commutativity);
    (12)~Refinement of the egg-box property;
    (13)~Refinement of normal local projections.}
    \label{fig:4x6x2}
\end{figure}

\begin{figure}[htb]
	\centering
	
	\begin{tikzpicture}[yscale=0.6, xscale=0.6]
	\begin{axis}[xlabel={$m$},ylabel={$|\mathsf{Mclex}[4,m,2]|$}]
	\addplot[color=black,mark=*] coordinates {
		(1,2)
		(2,2)
		(3,6)
		(4,21)
		(5,37)
		(6,79)
		(7,150)
		(8,241)
		(9,329)
		(10,384)
		(11,414)
		(12,430)
		(13,437)
		(14,440)
		(15,441)
		(16,441)
	};
	\end{axis}
	\end{tikzpicture}
	$\quad\quad$
	\begin{tikzpicture}[yscale=0.7, xscale=0.7]
    \begin{axis}[xlabel={$m$},ylabel={$|\mathsf{Mclex}[4,m,2]|-|\mathsf{Mclex}[4,m-1,2]|$}]
	\addplot[color=black,mark=*] coordinates {
		(2,0)
		(3,4)
		(4,15)
		(5,16)
		(6,42)
		(7,71)
		(8,91)
		(9,88)
		(10,55)
		(11,30)
		(12,16)
		(13,7)
		(14,3)
		(15,1)
		(16,0)
	};
	\end{axis}
    \end{tikzpicture}
	
	$\begin{array}{|r|c|c|c|c|c|c|c|c|c|c|c|c|c|c|c|c|}
	\hline
	m= & 1 & 2 & 3 & 4 & 5 & 6 & 7 & 8 & 9 & 10 & 11 & 12 & 13 & 14 & 15+\\
	\hline
	 \textrm{count} & 2 & 2 & 6 &  21 &  37 &  79 &  150 &  241 &  329 &  384 &  414 &  430 &  437 &  440 &  441\\\hline
	\end{array}$
	\caption{Count of matrix classes in $\mathsf{Mclex}[4,m,2]$ (including the two degenerate matrix classes that can be obtained from non-empty matrices).}
	\label{fig:sizes of 4xmx2}
\end{figure}
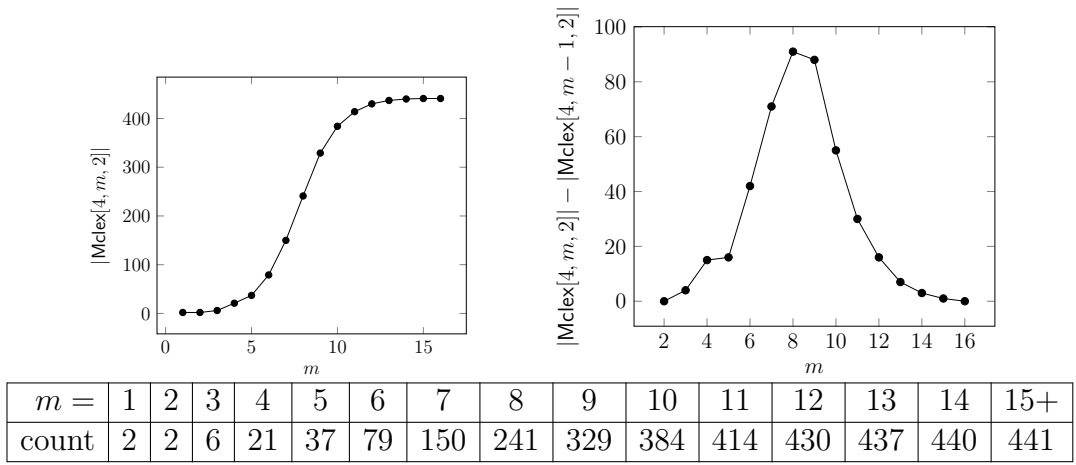

Figure~\ref{fig:4x6x2} shows some of the matrix classes that arise from matrices that are obtained from known Mal'tsev conditions in the literature or their syntactical refinements in the sense of Remark~\ref{RemB} (although we have not found explicitly the term `4-ary minority' in the literature). We will discuss some of these examples below.

Syntactical refinement of the Mal'tsev condition given in Theorem~1 of~\cite{Chajda1988} for the egg-box property yields a matrix that defines the matrix class appearing in the left-most place of the third row (from the top) of Figure~\ref{fig:4x6x2}. This matrix class appears already in Figure~\ref{fig:3xmx2}: right matrix in the third row.

The refinement of the Mal'tsev condition characterising varieties having `normal local projections' in the sense of~\cite{Janelidze2004} is given by:
$$\left\{\begin{array}{c} p(x_1,x_1,x_0,x_0,x_0,x_1)=x_0,\\ p(x_0,x_0,x_1,x_1,x_0,x_1)=x_0,\\
p(x_1,x_0,x_1,x_0,x_1,x_1)=x_0.\\ 
\end{array}\right.$$
The corresponding matrix, doubly lexi-ordered, is 
$$\left[\begin{array}{cccccc} 
0 & 0 & 0 & 1 & 1 & 1\\
0 & 1 & 1 & 0 & 0 & 1\\
1 & 0 & 1 & 0 & 1 & 1
\end{array}\right].$$
It coincidentally matches with the right matrix in the top row of Figure~\ref{fig:4x6x2}, which is the same as the left matrix in the second row of Figure~\ref{fig:3xmx2}.

Although Figure~\ref{fig:4x6x2} already has a large number of representations of matrix classes, there are some arising in the literature that cannot be found there. For instance, using the computer it is possible to establish that the syntactical refinement of the Mal'tsev condition defining varieties with `difunctional class relations' from~\cite{HoefnagelJanelidzeRodelo2020} gives a matrix class which does not live in $\mathsf{Mclex}[4,6,2]$, and so it cannot be found in Figure~\ref{fig:4x6x2}. This matrix class is the top one in Figure~\ref{fig:difclasrel}.

\begin{figure}
    \centering
	\includegraphics[width=350pt]{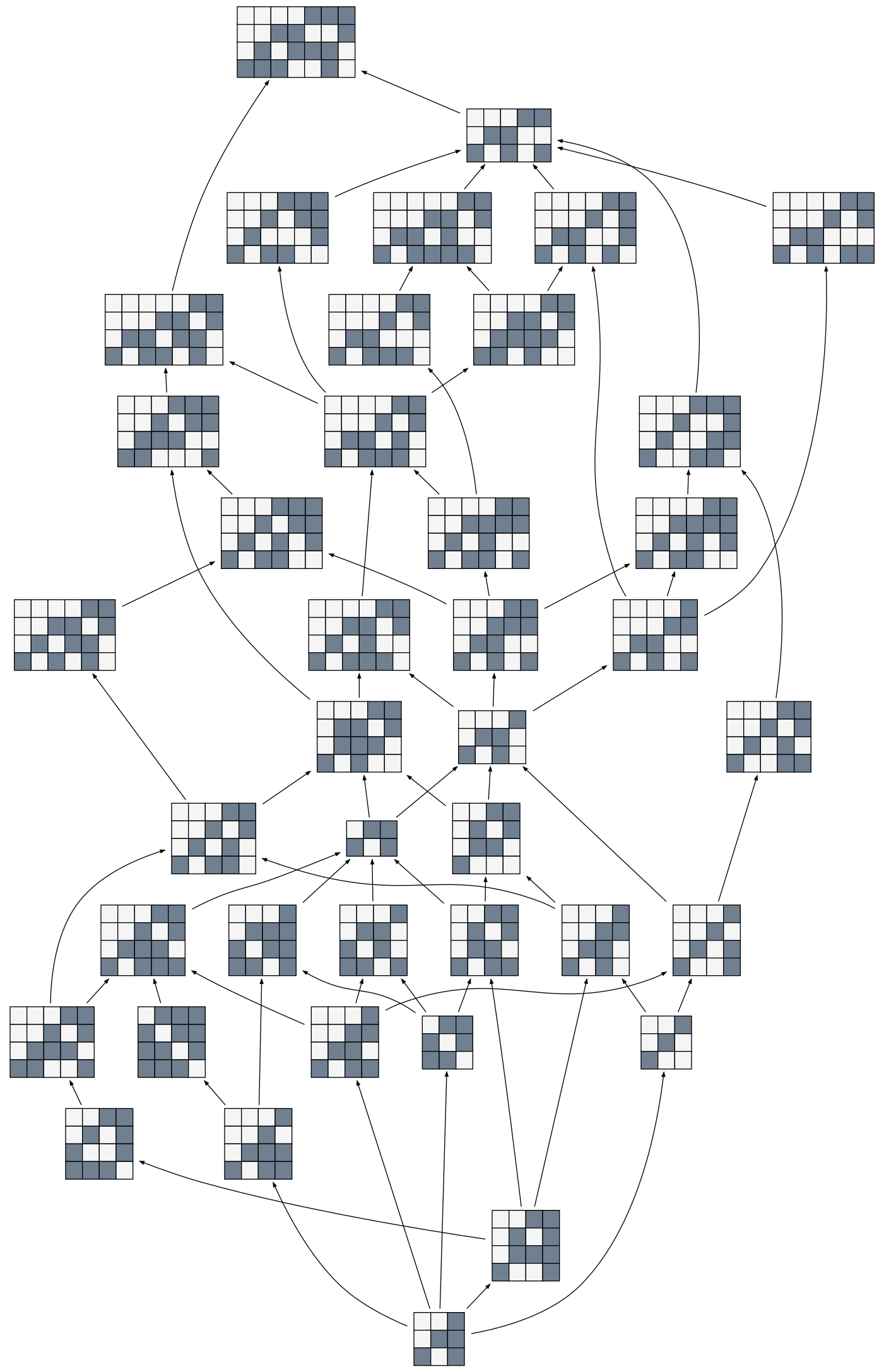}
    \caption{Hasse diagram of the poset of non-degenerate matrix classes in $\mathsf{Mclex}[4,7,2]$ that are contained in the matrix class given by the syntactical refinement of the Mal'tsev condition for difunctionality of class relations from~\cite{HoefnagelJanelidzeRodelo2020}.}
    \label{fig:difclasrel}
\end{figure}

We have also made some attempt to look at the cases when $k\in\{3,4\}$. For instance, we were able to generate Figure~\ref{fig:3x5x4} representing the poset of non-degenerate matrix classes in $\mathsf{Mclex}[3,5,4]$. In that picture, the darker a square is, the higher the value of the corresponding entry is. We have included there the digits in each square to improve readability. We also have been able to compute the posets $\mathsf{Mclex}[3,m,3]$ for $m\leqslant 9$ --- see Figure~\ref{fig:sizes of 3xmx3} for the number of matrix classes in these posets.

\begin{figure}
    \centering
	\includegraphics[width=273pt]{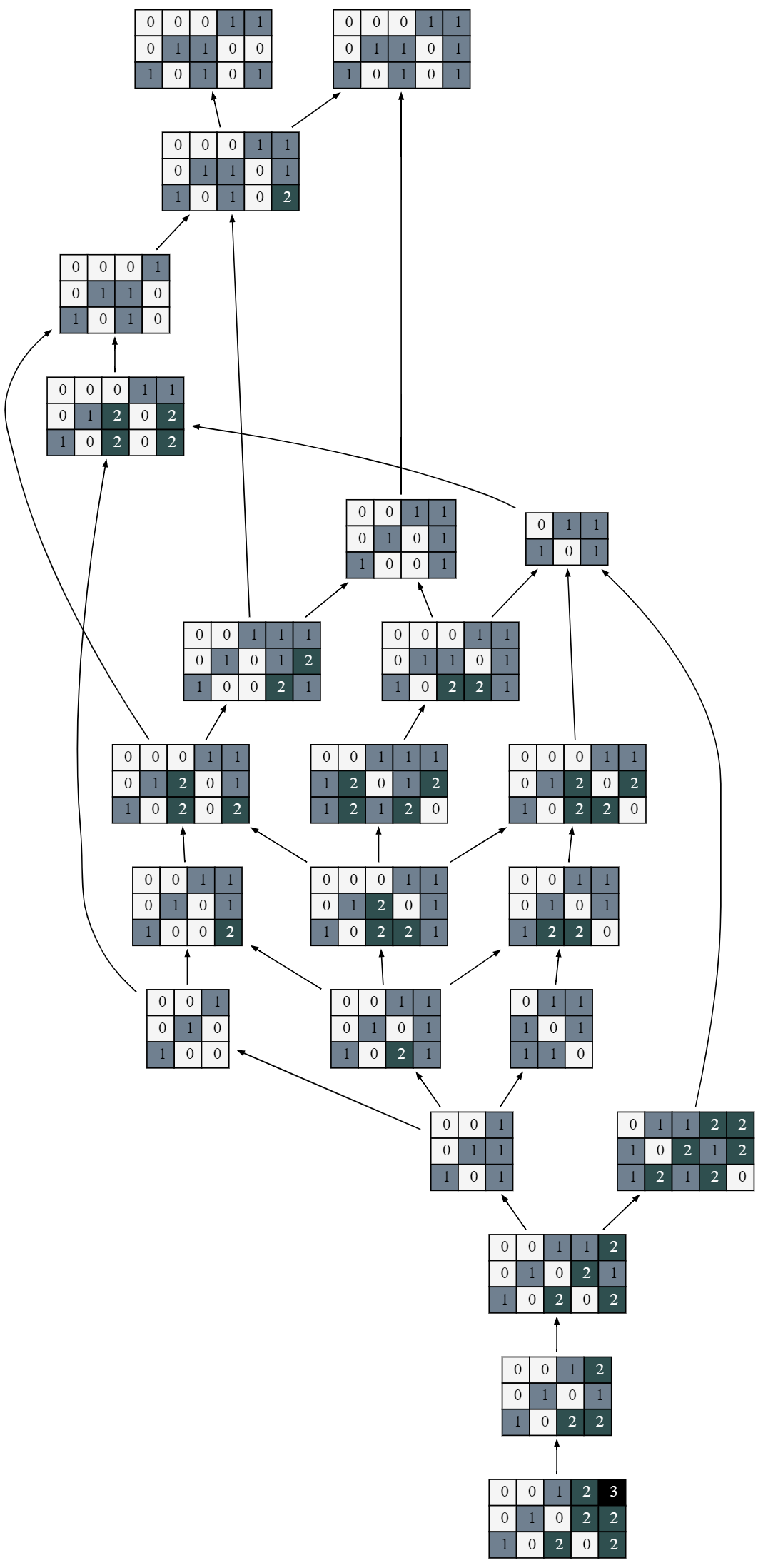}
    \caption{Hasse diagram of the poset of non-degenerate matrix classes in $\mathsf{Mclex}[3,5,4]$. Note that the bottom matrix class is the only one in the figure which does not belong to $\mathsf{Mclex}[3,5,3]$.}
    \label{fig:3x5x4}
\end{figure}

\begin{figure}[htb]
	\centering
	
	\begin{tikzpicture}[yscale=0.6, xscale=0.6]
	\begin{axis}[xlabel={$m$},ylabel={$|\mathsf{Mclex}[3,m,3]|$}]
	\addplot[color=black,mark=*] coordinates {
		(1,2)
		(2,2)
		(3,6)
		(4,12)
		(5,24)
		(6,78)
		(7,233)
		(8,812)
		(9,2384)
	};
	\end{axis}
	\end{tikzpicture}
	$\quad\quad$
	\begin{tikzpicture}[yscale=0.7, xscale=0.7]
    \begin{axis}[xlabel={$m$},ylabel={$|\mathsf{Mclex}[3,m,3]|-|\mathsf{Mclex}[3,m-1,3]|$}]
	\addplot[color=black,mark=*] coordinates {
		(2,0)
		(3,4)
		(4,6)
		(5,12)
		(6,54)
		(7,155)
		(8,579)
		(9,1572)
	};
	\end{axis}
    \end{tikzpicture}
	
	$\begin{array}{|r|c|c|c|c|c|c|c|c|c|c|c|c|c|c|c|c|}
	\hline
	m= & 1 & 2 & 3 & 4 & 5 & 6 & 7 & 8 & 9\\
	\hline
	 \textrm{count} & 2 &  2 & 6 &  12 &  24 &  78 &  233 & 812 & 2384\\\hline
	\end{array}$
	\caption{Count of matrix classes in $\mathsf{Mclex}[3,m,3]$ for $m\leqslant 9$ (including the two degenerate matrix classes that can be obtained from non-empty matrices).}
	\label{fig:sizes of 3xmx3}
\end{figure}

The matrix considered in the beginning of the Introduction (which was, by the way, chosen randomly), turns out to determine the matrix class that appears farthest to the right in the eighth row of Figure~\ref{fig:3x5x4}. This can be seen from the following two lex-tableaux, proposed by the computer:
$$\begin{array}{ccccc|cccc}
1 & 1 & 0 & 2 & 2 & 0 & 0 & 1 & 1\\
0 & 0 & 1 & 1 & 0 & 0 & 1 & 0 & 1\\
2 & 0 & 1 & 2 & 1 & 1 & 2 & 2 & 0\\
\hline
0 & 0 & 1 & 0 & 1 & 0\\
0 & 0 & 1 & 1 & 0 & 0\\
1 & 1 & 0 & 2 & 2 & 0\\
\end{array}
\quad\quad\quad
\begin{array}{cccc|ccccc}
0 & 0 & 1 & 1 & 1 & 1 & 0 & 2 & 2\\
0 & 1 & 0 & 1 & 0 & 0 & 1 & 1 & 0\\
1 & 2 & 2 & 0 & 2 & 0 & 1 & 2 & 1\\
\hline
2 & 2 & 1 & 1 & 2\\
1 & 1 & 0 & 0 & 1\\
2 & 2 & 2 & 0 & 0\\
\hline
1 & 2 & 2 & 0 & 0\\
0 & 0 & 1 & 1 & 0\\
0 & 1 & 0 & 1 & 0\\
\end{array}$$

Given positive integers $n_1,n_2,m_1,m_2,k_1,k_2$, we obviously have the poset inclusion
$$\mathsf{Mclex}[\mathsf{min}(n_1,n_2),\mathsf{min}(m_1,m_2),\mathsf{min}(k_1,k_2)] \subseteq \mathsf{Mclex}[n_1,m_1,k_1] \cap \mathsf{Mclex}[n_2,m_2,k_2].$$
The question of the converse inclusion is in general not easy. Using the computer, we found the following equalities:
$$\mathsf{Mclex}[3,7,2] \cap \mathsf{Mclex}[3,5,4] = \mathsf{Mclex}[3,5,2],$$
$$\mathsf{Mclex}[3,7,2] \cap \mathsf{Mclex}[4,5,2] = \mathsf{Mclex}[3,5,2]$$
and
$$\mathsf{Mclex}[3,7,2] \cap \mathsf{Mclex}[4,6,2] = \mathsf{Mclex}[3,6,2].$$
However, in other cases, the inclusion may be strict. For instance, the computer calculated that the intersection $\mathsf{Mclex}[3,9,3] \cap \mathsf{Mclex}[4,15,2]$ contains exactly six matrix classes that do not belong to $\mathsf{Mclex}[3,9,2]=\mathsf{Mclex}[3,7,2]$. One of these matrix classes is given by the matrices
$$M_1 = \left[\begin{array}{ccccccc} 
0 & 0 & 0 & 1 & 1\\
0 & 1 & 1 & 0 & 1\\
1 & 0 & 2 & 2 & 1
\end{array}\right]$$
which is $(3,9,3)$-canonical (and thus also $(3,5,3)$-canonical) and
$$M'_1 = \left[\begin{array}{ccccc} 
0 & 0 & 0 & 1 & 1\\
0 & 0 & 1 & 0 & 1\\
0 & 1 & 1 & 1 & 0\\
1 & 1 & 0 & 0 & 1
\end{array}\right]$$
which is $(4,15,2)$-canonical (and thus also $(4,5,2)$-canonical) and such that $\mathsf{mclex}\{M_1\}=\mathsf{mclex}\{M'_1\}$. Therefore, the matrix $M'_1$ cannot be $(4,5,3)$-canonical. This matrix class appears in the right place of the sixth row of Figure~\ref{fig:3x5x4} represented by $M_1$ but appears in the fourth place of the seventh row of Figure~\ref{fig:4x5x2} represented by $M'_1$ (and of course also in Figures~\ref{fig:4x6x2} and~\ref{fig:difclasrel}). This is the only matrix class that has been represented by two different matrices in our Figures~\ref{fig:4x5x2}--\ref{fig:3x5x4}. The five other elements of the intersection $\mathsf{Mclex}[3,9,3] \cap \mathsf{Mclex}[4,15,2]$ which are not in $\mathsf{Mclex}[3,7,2]$ are given by $\mathsf{mclex}\{M_i\}=\mathsf{mclex}\{M'_i\}$, for $i\in\{2,3,4,5,6\}$, where $M_i$ is a $(3,9,3)$-canonical matrix and $M'_i$ is a $(4,15,2)$-canonical matrix given by
\begin{align*}
M_2 = \left[\begin{array}{ccccccc} 
0 & 0 & 0 & 0 & 1 & 1 & 1\\
0 & 1 & 1 & 2 & 0 & 0 & 2\\
1 & 0 & 2 & 2 & 0 & 2 & 1
\end{array}\right]\quad &\text{ and } \quad
M'_2 = \left[\begin{array}{ccccc} 
0 & 0 & 0 & 1 & 1\\
0 & 0 & 1 & 0 & 1\\
0 & 1 & 0 & 1 & 0\\
1 & 0 & 1 & 1 & 0
\end{array}\right],\\
M_3 = \left[\begin{array}{cccccccc} 
0 & 0 & 0 & 0 & 0 & 1 & 1 & 1\\
0 & 1 & 1 & 1 & 2 & 0 & 1 & 2\\
1 & 0 & 1 & 2 & 2 & 0 & 2 & 0
\end{array}\right]\quad &\text{ and } \quad
M'_3 = \left[\begin{array}{cccccc} 
0 & 0 & 0 & 0 & 1 & 1\\
0 & 0 & 0 & 1 & 0 & 1\\
0 & 1 & 1 & 0 & 1 & 0\\
1 & 0 & 1 & 1 & 1 & 0
\end{array}\right],\\
M_4 = \left[\begin{array}{cccccccc} 
0 & 0 & 0 & 0 & 1 & 1 & 1 & 1\\
0 & 1 & 1 & 1 & 0 & 0 & 0 & 1\\
1 & 0 & 1 & 2 & 0 & 1 & 2 & 2
\end{array}\right]\quad &\text{ and } \quad
M'_4 = \left[\begin{array}{cccccccc} 
0 & 0 & 0 & 0 & 0 & 1 & 1 & 1\\
0 & 0 & 0 & 1 & 1 & 0 & 0 & 1\\
0 & 1 & 1 & 0 & 1 & 0 & 1 & 0\\
1 & 0 & 1 & 1 & 0 & 1 & 1 & 0
\end{array}\right],\\
M_5 = \left[\begin{array}{ccccccccc} 
0 & 0 & 0 & 0 & 0 & 1 & 1 & 1 & 1\\
0 & 1 & 1 & 1 & 2 & 0 & 1 & 2 & 2\\
1 & 0 & 1 & 2 & 1 & 0 & 2 & 1 & 2
\end{array}\right]\quad &\text{ and } \quad
M'_5 = \left[\begin{array}{ccccccc} 
0 & 0 & 0 & 0 & 0 & 1 & 1\\
0 & 0 & 0 & 1 & 1 & 0 & 1\\
0 & 1 & 1 & 0 & 1 & 1 & 0\\
1 & 0 & 1 & 1 & 0 & 1 & 0
\end{array}\right],\\
M_6 = \left[\begin{array}{ccccccccc} 
0 & 0 & 0 & 0 & 1 & 1 & 2 & 2 & 2\\
0 & 1 & 1 & 2 & 0 & 2 & 1 & 2 & 2\\
1 & 0 & 2 & 2 & 0 & 1 & 0 & 0 & 2
\end{array}\right]\quad &\text{ and } \quad
M'_6 = \left[\begin{array}{cccccc} 
0 & 0 & 0 & 0 & 1 & 1\\
0 & 0 & 1 & 1 & 0 & 1\\
0 & 1 & 0 & 1 & 1 & 0\\
1 & 0 & 1 & 0 & 1 & 0
\end{array}\right].
\end{align*}

\section{Context sensitivity}\label{sec:context sensitivity}

Let us now address the following question. Restricted only to algebraic categories (i.e., categories of algebras in a variety), our matrix properties are a particular type of Mal'tsev conditions, and so it is possible that implication of these Mal'tsev conditions is equivalent to implication of the corresponding matrix properties. We will now show that this is \emph{not} the case. In other words, our poset $\mathsf{Mclex}$ is not `visible' at the level of collections of varieties of universal algebras. Note that replacing varieties with quasi-varieties would not help either, since, as it can be established, a quasi-variety belongs to a given matrix class if and only if the generated variety does. Moreover, the counter-example described below also indicates that there are implications of matrix properties that hold for regular well-powered categories, but not for all finitely complete categories.

Algebraic categories having $D_n$-closed relations, where $D_n$ is the same as at the start of Section~\ref{sec4}, are given by varieties having the so-called `$n$-ary near unanimity term'. As shown in~\cite{Mitchke1978}, for each given $n>2$, such a variety is `congruence distributive' (i.e., congruence lattices are distributive). As remarked in~\cite{Pixley1979}, this means that algebraic Mal'tsev categories with $D_n$-closed relations are nothing but arithmetical varieties. As proved in~\cite{HoefnagelPhD}, this result extends to Barr-exact categories, and thus, according to~\cite{Jacqmin2020a}, to all regular well-powered categories: given $n>2$, a regular well-powered category has $M$-closed relations and $D_n$-closed relations if and only if it has $A$-closed relations, where $M$ is the matrix that determines the matrix class of Mal'tsev categories (Example~\ref{ExaA}) and $A$ is the matrix for arithmetical varieties (Example~\ref{ExaC}). However, we know from~\cite{Hoefnagel2019a} that this result does not extend to finitely complete categories, i.e., the inclusion in
$$\mathsf{mclex}\{M\}\cap\mathsf{mclex}\{D_3\}=\mathsf{mclex}\{A\}\subsetneqq \mathsf{mclex}\{M\}\cap\mathsf{mclex}\{D_4\}$$
is strict. We can (re-)prove the above equality and inclusion using lex-tableaux and the computer-aided classification results. The matrix $M$ is a reduction of~$A$, so the matrix class for $A$ is contained in the matrix class for~$M$. That the matrix class for $A$ is contained in the matrix class for $D_3$ can be established by the following lex-tableau (where we used doubly lexi-ordered versions of our matrices):
$$\begin{array}{ccc|ccc}
0 & 0 & 1 & 0 & 0 & 1\\
0 & 1 & 1 & 0 & 1 & 0 \\
1 & 0 & 1 & 1 & 0 & 0\\
\hline
0 & 0 & 1 & 0\\
1 & 0 & 0 & 1\\
0 & 1 & 0 & 1\\
\hline
0 & 0 & 0 & 0\\
1 & 0 & 1 & 0\\
0 & 1 & 1 & 0
\end{array}$$
This proves $$\mathsf{mclex}\{A\}\subseteq\mathsf{mclex}\{M\}\cap\mathsf{mclex}\{D_3\}.$$
The converse inclusion can be established by the following lex-tableau; we hope the reader will be able to understand the slightly different form of the tableau resulting from the fact that we now have two matrices in the premise:
$$\begin{array}{ccc:ccc|ccc}
& & & 0 & 0 & 1 & 0 & 0 & 1\\
0 & 1 & 1 & 0 & 1 & 0 & 0 & 1 & 1 \\
1 & 0 & 1 & 1 & 0 & 0 & 1 & 0 & 1 \\
\hline
& & & 0 & 0 & 1 & 0\\
& & & 1 & 0 & 1 & 1\\
& & & 0 & 1 & 1 & 1\\
\hline
0 & 0 & 0 &&&& 0\\
0 & 1 & 1 &&&& 0\\
1 & 0 & 1 &&&& 0\\
\end{array}$$
This proves the desired equality. The strict inclusion $\mathsf{mclex}\{A\}\subsetneqq \mathsf{mclex}\{M\}\cap\mathsf{mclex}\{D_4\}$ follows from Figure~\ref{fig:4x4x2}, since as we will now show, $\mathsf{mclex}\{M\}\cap\mathsf{mclex}\{D_4\}$ is given by the left matrix in the fourth row of Figure~\ref{fig:4x4x2}, which we will denote by~$A'$. Figure~\ref{fig:4x4x2} shows that $$\mathsf{mclex}\{A'\}\subseteq \mathsf{mclex}\{M\}\cap\mathsf{mclex}\{D_4\}.$$ The converse is established by the following lex-tableau:
$$\begin{array}{ccc:cccc|cccc}
& & & 0 & 0 & 0 & 1 & 0 & 0 & 0 & 1  \\
& & & 0 & 0 & 1 & 0 & 0 & 0 & 1 & 0  \\
0 & 1 & 1 & 0 & 1 & 0 & 0 & 0 & 1 & 1 & 1  \\
1 & 0 & 1 & 1 & 0 & 0 & 0 & 1 & 0 & 1 & 1  \\
\hline
& & & 0 & 0 & 0 & 1 & 0\\
& & & 0 & 0 & 1 & 0 & 0\\
& & & 1 & 0 & 1 & 1 & 1\\
& & & 0 & 1 & 1 & 1 & 1\\
\hline
0 & 0 & 0 &&&&& 0\\
0 & 0 & 0 &&&&& 0\\
0 & 1 & 1 &&&&& 0\\
1 & 0 & 1 &&&&& 0
\end{array}$$
This proves $\mathsf{mclex}\{A'\}=\mathsf{mclex}\{M\}\cap\mathsf{mclex}\{D_4\}.$ That $\mathsf{mclex}\{A'\}\nsubseteq \mathsf{mclex}\{A\}$ also has a simple counter-example given by $(\mathsf{Inj}_{\{M\}}\mathbf{Rel}_3)^\mathsf{op}$, i.e., the dual of the category of all ternary relations that are $M$-sharp (where $M$ is the Mal'tsev matrix). This category is not a majority category, yet any ternary relation is $D_4$-sharp, since any $3$-row reduction of $D_4$ contains a left column that is identical to the right column (see Proposition~3.25 in~\cite{Hoefnagel2019a}).

As an another counter-example, let us consider the matrix class on the right of the fourth row of Figure~\ref{fig:3xmx2}. Algebraic categories belonging to that matrix class are given by the varieties which admit a `$3$-edge term' in the sense of~\cite{BermanIdziakMarkovicMcKenzieValerioteWillard2010}. As it is shown in the mentioned paper, a variety admits a $3$-edge term if and only if it admits a `$3$-cube term', i.e., if and only if it belongs to the top matrix class of Figure~\ref{fig:3xmx2}. This means that the matrix on the right of the fourth row of Figure~\ref{fig:3xmx2} and the five other matrices above it in that figure all give equivalent conditions on algebraic categories. In a similar way, this gives that $37$ of the matrix classes in Figure~\ref{fig:4x6x2} coincide when restricted to algebraic categories. We can also deduce from this result that $227$ matrix classes in $\mathsf{Mclex}[4,15,2]$ coincide when restricted to algebraic categories. Moreover, as it shown in~\cite{BermanIdziakMarkovicMcKenzieValerioteWillard2010}, a variety admits a `$4$-edge term' if and only if it admits a `$4$-cube term'. Using this result, we can deduce that $173$ other matrix classes in $\mathsf{Mclex}[4,15,2]$ coincide when restricted to algebraic categories. Combining these two facts, we know that the $441$ different matrix classes in $\mathsf{Mclex}[4,15,2]$ give at most $43$ non-equivalent Mal'tsev conditions on varieties.

So, implications of matrix properties are `context sensitive': their validity depends on what further exactness properties the base category has, such as being finitely complete or algebraic. We remark here that, as shown in~\cite{Jacqmin2020a}, matrix properties are stable under the `exact completion' $\C \hookrightarrow \C_{\mathsf{reg/ex}}$ of a regular well-powered category~\cite{Lawvere1972}. In particular, since the embedding $\C \hookrightarrow \C_{\mathsf{reg/ex}}$ is a fully faithful regular functor (see e.g.~\cite{CarboniVitale1998}), it follows that implications of matrix properties are not sensitive to whether the base regular category is Barr-exact or not (assuming the axiom of universes~\cite{ArtinGrothendieckVerdier1972} to avoid size issues). In general, it would be interesting to investigate implications of matrix properties, in a similar way as we have done here, in the contexts of algebraic categories and of regular categories, and to analyse their differences. One could try to do the algebraic case using the characterisation~\cite{ZJanelidze2006a} of matrix properties in that context as Mal'tsev conditions. As for the regular case, the technique of the present paper cannot be directly applied there since the categories $(\mathsf{Inj}_S\mathbf{Rel}_n)^\mathsf{op}$ are in general not regular. However, using the results from~\cite{Jacqmin2018,Jacqmin2020a,JacqminJanelidze2021} (and again assuming the axiom of universes), proving an implication of matrix properties for all regular categories is equivalent to proving it only for regular locally presentable categories~\cite{GabrielUlmer1971}, or equivalently, for regular essentially algebraic categories~\cite{AdamekHerrlichRosicky1988,AdamekRosicky1994}. Therefore, the techniques one could use to solve the algebraic case might have a counterpart in the essentially algebraic world to solve the regular case. In view of Theorem~\ref{ThmF}, since the categories $\mathsf{Inj}_S\mathbf{Rel}_n$ are essentially algebraic, the situation is somehow dual in the finitely complete context: proving an implication of matrix properties for all finitely complete categories is equivalent to proving it only for the duals of essentially algebraic categories.

\begin{remark}
We should point out a mistake in the statement/proof of Proposition~3.23 in~\cite{Hoefnagel2019a} (which is also Proposition~2.36 in~\cite{HoefnagelPhD}, and which is referenced in the concluding remarks section of~\cite{Hoefnagel2020a}). That proposition claims that, for the matrix $M$ that determines the matrix class of Mal'tsev categories (Example~\ref{ExaA}), the category $(\mathsf{Inj}_{\{M\}}\mathbf{Rel}_3)^\mathsf{op}$ is a regular category --- however, $(\mathsf{Inj}_{\{M\}}\mathbf{Rel}_3)^\mathsf{op}$ is not a regular category.
\end{remark}

Let us give another example showing context-sensitivity of implications of matrix properties. The Mal'tsev condition for `local-anticommutativity' given in~\cite{Hoefnagel2020c} is the Mal'tsev condition for direct decomposability of congruence classes~\cite{Duda1986} together with some identities linking some of the `inner' terms involved in the condition. This results in the syntactical refinement of the two Mal'tsev conditions being the same. The corresponding matrix class is the left one in the sixth row (from the top) of Figure~\ref{fig:4x6x2} which, as we can see on that figure, contains the third one in the twelfth row. As it follows from Remark~2.25 in~\cite{Hoefnagel2020c}, a Mal'tsev variety is locally anticommutative if and only if it is arithmetical. This implies that, in the algebraic case, the matrix class appearing in the third place of the twelfth row of Figure~\ref{fig:4x6x2} (which is the same as the matrix class from Remark~\ref{RemB}) matches with the matrix class given by the arithmetical matrix (Example~\ref{ExaB}). In the finitely complete context, this is not the case as shown by Figure~\ref{fig:4x6x2}.

We conclude by mentioning that, however implications of matrix properties are context sensitive in general, it is shown in~\cite{HoefnagelJacqmin2022} that the inclusion $\mathsf{mclex}\{N\}\subseteq \mathsf{mclex}\{M\}$, where $M$ is the Mal'tsev matrix and $N\in\M(n,m,k)$ is any matrix, is equivalent to the analogue inclusion in the algebraic context. Moreover, it is shown in~\cite{HoefnagelJacqminJanelidzeVanderWalt2022} that, if $N$ only contains $0$'s and $1$'s (i.e. $k=2$), this implication is further equivalent to $\mathsf{mclex}\{M'\}\nsubseteq \mathsf{mclex}\{N\}$ where $M'$ is the majority matrix of Example~\ref{ExaB}.

\end{document}